\author{Giles Gardam}
\title{Detecting laws in power subgroups}
\theoremstyle{plain}
\newtheorem{thm}{Theorem}
\numberwithin{thm}{section}
\newtheorem{lem}[thm]{Lemma}
\newtheorem{prop}[thm]{Proposition}
\newtheorem{cor}[thm]{Corollary}
\newtheorem{keythm}{Theorem}
\newtheorem{keycor}{Corollary}
\newtheorem*{corNilpotencyClass}{Corollary \ref{cor:nilpotency_is_detectable}}
\newtheorem*{corEngel}{Corollary \ref{cor:Engel}}
\newtheorem*{thmLocallyNilpotentVariety}{Theorem \ref{thm:any_nilpotent_variety}}
\newtheorem*{thmMetabelian}{Theorem \ref{thm:metabelian_counterexample}}
\newtheorem*{thmAbelianPresentation}{Theorem \ref{thm:its_abelian}}
\theoremstyle{definition}
\newtheorem{defn}[thm]{Definition}
\newtheorem{eg}[thm]{Example}
\newtheorem{qn}{Question}
\theoremstyle{remark}
\newtheorem{rmk}[thm]{Remark}
\newcommand{\cA}{\mathcal{A}}
\newcommand{\cB}{\mathcal{B}}
\newcommand{\cC}{\mathcal{C}}
\newcommand{\cD}{\mathcal{D}}
\newcommand{\cL}{\mathcal{L}}
\newcommand{\cM}{\mathcal{M}}
\newcommand{\cN}{\mathcal{N}}
\newcommand{\cP}{\mathcal{P}}
\newcommand{\cQ}{\mathcal{Q}}
\newcommand{\cU}{\mathcal{U}}
\newcommand{\cV}{\mathcal{V}}
\newcommand{\cW}{\mathcal{W}}
\newcommand{\Z}{\mathbb{Z}}
\newcommand{\abs}[1]{\lvert #1 \rvert}
\newcommand{\epi}{\twoheadrightarrow}
\newcommand{\gen}[1]{\langle #1 \rangle}
\newcommand{\gp}[2]{\langle \, #1 \, | \, #2 \, \rangle}
\newcommand{\mat}[4]{\begin{pmatrix} #1 & #2 \\ #3 & #4 \end{pmatrix}}
\newcommand{\normal}{\vartriangleleft}
\newcommand{\ps}[2]{{#1^{#2}}}
\newcommand{\set}[2]{\left\{ #1 \,\middle|\, #2 \right\}}
\newcommand{\SL}{\operatorname{SL}}
\newcommand{\Zmod}[1]{{\mathbb{Z}_{#1}}}
\DeclareMathOperator{\nprod}{\odot}
\DeclareMathOperator{\Forall}{\forall}
\DeclareFontFamily{OMX}{MnSymbolE}{}
\DeclareSymbolFont{MnLargeSymbols}{OMX}{MnSymbolE}{m}{n}
\DeclareFontShape{OMX}{MnSymbolE}{m}{n}{
    <-6>  MnSymbolE5
   <6-7>  MnSymbolE6
   <7-8>  MnSymbolE7
   <8-9>  MnSymbolE8
   <9-10> MnSymbolE9
  <10-12> MnSymbolE10
  <12->   MnSymbolE12
}{}
\DeclareFontShape{OMX}{MnSymbolE}{b}{n}{
    <-6>  MnSymbolE-Bold5
   <6-7>  MnSymbolE-Bold6
   <7-8>  MnSymbolE-Bold7
   <8-9>  MnSymbolE-Bold8
   <9-10> MnSymbolE-Bold9
  <10-12> MnSymbolE-Bold10
  <12->   MnSymbolE-Bold12
}{}
\let\llangle\@undefined
\let\rrangle\@undefined
\DeclareMathDelimiter{\llangle}{\mathopen}%
                     {MnLargeSymbols}{'164}{MnLargeSymbols}{'164}
\DeclareMathDelimiter{\rrangle}{\mathclose}%
                     {MnLargeSymbols}{'171}{MnLargeSymbols}{'171}
\newcommand{\ncls}[1]{\llangle #1 \rrangle}
\mathchardef\ordinarycolon\mathcode`\:
\thanks{This work was partially supported by the Clarendon Fund, Balliol College Marvin Bower Scholarship, and the James Fairfax Oxford Australia Scholarship}
\address{Department of Mathematics \\ Technion \\ Haifa \\ Israel}
\email{gilesgar@technion.ac.il}
\keywords{Group laws, power subgroups, varieties of groups, locally nilpotent varieties, Engel conditions, group presentations, group identities}
\subjclass[2010]{20E10 (primary), 20F19, 20F45, 20F05 (secondary)}
\begin{document}

\begin{abstract}
A group law is said to be detectable in power subgroups if, for all coprime $m$ and $n$, a group $G$ satisfies the law if and only if the power subgroups $\ps{G}{m}$ and $\ps{G}{n}$ both satisfy the law.
We prove that for all positive integers $c$, nilpotency of class at most $c$ is detectable in power subgroups, as is the $k$-Engel law for $k$ at most 4.
In contrast, detectability in power subgroups fails for solvability of given derived length: we construct a finite group $W$ such that $\ps{W}{2}$ and $\ps{W}{3}$ are metabelian but $W$ has derived length $3$.
We analyse the complexity of the detectability of commutativity in power subgroups, in terms of finite presentations that encode a proof of the result.
\end{abstract}

\maketitle


\section{Introduction}

This article studies the following broad question: what can be deduced about a group $G$ by examining its power subgroups $\ps{G}{n} = \gen{g^n : g \in G}$?
In particular, can one infer which laws $G$ satisfies?

Let $F_\infty = F(x_1, x_2, \dots)$ be the free group on the basis $\{x_1, x_2, \dots\}$.
A \emph{law} (or \emph{identity}) is a word $w \in F_\infty$, and we say a group $G$ \emph{satisfies} the law $w$ if $\varphi(w) = 1$ for all homomorphisms $\varphi \colon F_\infty \to G$.
For notational convenience, when we require only variables $x_1$ and $x_2$ we will instead write $x$ and $y$.
We can also think of a law $w$ on $k$ variables $x_1, \dots, x_k$ as a function $w \colon \underbrace{G \times \dots \times G}_{\text{$k$ times}} \to G$, written $w(g_1, \dots, g_k) := \varphi(w)$ for a homomorphism $\varphi \colon F_\infty \to G$ such that $\varphi(x_i) = g_i$.

Laws give a common framework for defining various group properties; basic examples include commutativity (corresponding to the law $[x, y]$), having exponent~$m$ (the Burnside law $x^m$), being metabelian (the law $[[x_1, x_2], [x_3, x_4]]$), and nilpotency of class at most $c$ (the law $[[[\dots[x_1, x_2], x_3], \dots, x_c], x_{c+1}]$).

\begin{defn}
    \label{defn:detectable}
    A group law $w$ is \emph{detectable in power subgroups} if, for all coprime $m$ and $n$, a group $G$ satisfies $w$ if and only if the power subgroups $\ps{G}{m}$ and $\ps{G}{n}$ both satisfy $w$.
\end{defn}

A subgroup of $G$ will satisfy all the laws of $G$, but in general it is possible even for coprime $m$ and $n$ that the power subgroups $\ps{G}{m}$ and $\ps{G}{n}$ satisfy a common law that $G$ does not; for example, the holomorph $G = \Zmod{7} \rtimes \Zmod{6}$ (where $\Zmod{6} \cong \operatorname{Aut}\Zmod{7}$ acts faithfully) was shown to have this property in \cite[Example 8.2]{neumann_cubed} (and is in fact the smallest such group).
A concrete example of a law that holds in $\ps{G}{2}$ and $\ps{G}{3}$ but not $G$ is $[[x^2, y^2]^3, y^3]$.
Another basic example is the holomorph $G = \Zmod{9} \rtimes \Zmod{6}$, which does not satisfy the law $[x^2, x^y]$ although $\ps{G}{2}$ and $\ps{G}{3}$ do.

\begin{eg}
    The law $x^r$ is detectable in power subgroups.
\end{eg}
This basic example is immediate: for every $g \in G$, if $(g^m)^r = 1$ and $(g^n)^r = 1$, then $g^r = 1$ as $m$ and $n$ are coprime.

A classical theme in group theory is the study of conditions that imply that a group is abelian.
This was recently revived by Venkataraman in \cite{gv}, where she proved that commutativity is detectable in power subgroups for finite groups.
We can extend this to infinite groups using residual finiteness of metabelian groups (a theorem of P.~Hall \cite[15.4.1]{robinson}); it appears that this result is folklore.

In this article we prove that this result generalizes to the nilpotent case:
\begin{keycor}
    \label{cor:nilpotency_is_detectable}
    Let $m$ and $n$ be coprime and let $c \geq 1$.
    Then a group $G$ is nilpotent of class at most $c$ if and only if $\ps{G}{m}$ and $\ps{G}{n}$ are both nilpotent of class at most $c$.
\end{keycor}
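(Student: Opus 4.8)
The forward implication is immediate: for any subgroup $H \leq G$ one has $\gamma_{i}(H) \leq \gamma_{i}(G)$, so a subgroup of a group of class at most $c$ again has class at most $c$. The plan for the converse is to reduce to finite groups and then bootstrap back. Two structural consequences of coprimality drive everything. First, $\ps{G}{m}$ and $\ps{G}{n}$ are verbal subgroups, hence fully invariant and in particular normal in $G$. Second, choosing $a, b \in \Z$ with $am + bn = 1$, every $g \in G$ factors as $g = \ps{(g^a)}{m}\, \ps{(g^b)}{n}$, so $G = \ps{G}{m}\,\ps{G}{n}$ as a product of subgroups. Granting that $\ps{G}{m}$ and $\ps{G}{n}$ have class at most $c$, Fitting's theorem (the product of two normal nilpotent subgroups of classes $c_1, c_2$ is nilpotent of class at most $c_1 + c_2$) shows at once that $G$ is nilpotent of class at most $2c$, for arbitrary $G$. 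The real content is to sharpen this to $c$.

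For finite $G$, knowing $G$ is nilpotent lets me write $G = \prod_p P_p$ as the direct product of its Sylow subgroups, with $\operatorname{class}(G) = \max_p \operatorname{class}(P_p)$. Here coprimality enters a second time: for each prime $p$ dividing $\abs{G}$, at least one of $m, n$ is coprime to $p$, say $m$. Then $m$ is invertible modulo the order of every element of the $p$-group $P_p$, so each $g \in P_p$ lies in $\gen{g^m}$ and hence $\ps{P_p}{m} = P_p$. Since $\ps{P_p}{m} \leq \ps{G}{m}$, this exhibits $P_p$ as a subgroup of $\ps{G}{m}$, forcing $\operatorname{class}(P_p) \leq \operatorname{class}(\ps{G}{m}) \leq c$. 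Taking the maximum over $p$ yields $\operatorname{class}(G) \leq c$. This per-prime argument, in which each Sylow subgroup is captured whole by one of the two power subgroups, is the crux.

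To reach arbitrary $G$, I first reduce to the finitely generated case. Since class at most $c$ is the law $[x_1, \dots, x_{c+1}]$, it suffices to show every $(c+1)$-generated subgroup has class at most $c$; and any subgroup $H \leq G$ inherits the hypotheses because $\ps{H}{m} \leq \ps{G}{m}$ and $\ps{H}{n} \leq \ps{G}{n}$, so we may assume $G$ is finitely generated. By the first paragraph $G$ is then finitely generated and nilpotent, hence residually finite. If $\gamma_{c+1}(G) \neq 1$, choose a finite quotient $q \colon G \epi \bar{G}$ not killing some nontrivial element of $\gamma_{c+1}(G)$, so $\operatorname{class}(\bar{G}) > c$. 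But $\ps{\bar{G}}{m} = q(\ps{G}{m})$ and $\ps{\bar{G}}{n} = q(\ps{G}{n})$ are homomorphic images of groups of class at most $c$, hence themselves of class at most $c$, and the finite case forces $\operatorname{class}(\bar{G}) \leq c$, a contradiction. Therefore $\gamma_{c+1}(G) = 1$.

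The main obstacle is precisely the sharpening from class $2c$, which Fitting supplies for free, down to the optimal class $c$; it is resolved entirely by the observation that coprimality guarantees each Sylow subgroup survives untouched inside at least one of the two power subgroups. I expect this argument to be exactly the instance, for the variety of nilpotent groups of class at most $c$, of the general result on locally nilpotent varieties in Theorem~\ref{thm:any_nilpotent_variety}, and to serve as the model case underlying it.
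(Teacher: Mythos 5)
Your proof is correct and follows essentially the same route as the paper: Fitting's theorem to get nilpotency (of class at most $2c$), then residual finiteness of finitely generated nilpotent groups plus the Sylow decomposition of finite nilpotent groups, with coprimality ensuring each Sylow $p$-subgroup is swallowed whole by one of $\ps{G}{m}$, $\ps{G}{n}$. The only difference is organizational: the paper packages the residual-finiteness/Sylow step as a statement about arbitrary laws in finitely generated nilpotent groups (Theorem~\ref{thm:nilpotent_laws}), deducing the corollary from the general Theorem~\ref{thm:any_nilpotent_variety}, whereas you specialize the identical argument to the nilpotency law.
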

Fitting's Theorem (see \ref{thm:fitting} below) readily implies a weak form of the ``if'' direction, namely that $G$ is nilpotent of class at most $2c$, but it is much less obvious that the precise nilpotency class is preserved.

Detectability of laws in power subgroups has an elegant formulation in the language of group varieties, which we develop in Section~\ref{subsection:varieties}.
The reader unfamiliar with varieties should not be deterred, as our use of this language is simply a means of expressing our reasoning in a natural and general setting.
In particular, our treatment of varieties is essentially self-contained, and no deep theorems are called upon.

Let $\cN_c$ denote the variety of nilpotent groups of class at most $c$ and let $\cB_m$ denote the `Burnside' variety of groups of exponent $m$.
Employing the notion of product varieties (Definition~\ref{defn:product}), we can restate the conclusion of Corollary~\ref{cor:nilpotency_is_detectable} as $\cN_c \cB_m \cap \cN_c \cB_n = \cN_c$.
We prove this as a corollary of a stronger result:

\begin{keythm}
    \label{thm:any_nilpotent_variety}
    Let $\cV$ be a locally nilpotent variety and let $m$ and $n$ be coprime.
    Then \[
        \cV \cB_m \cap \cV \cB_n = \cV.
    \]
\end{keythm}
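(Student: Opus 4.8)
The plan is to reformulate the statement via verbal subgroups and then grind the problem down to the case of a finite $p$-group, where coprimality makes the conclusion immediate. Recall that for the Burnside variety the verbal subgroup is $\cB_m(G) = \ps{G}{m}$, and that $G \in \cV\cB_m$ if and only if $\ps{G}{m} \in \cV$ (the verbal subgroup is the smallest normal subgroup with quotient in $\cB_m$, and $\cV$ is closed under subgroups). With this dictionary the inclusion $\cV \subseteq \cV\cB_m \cap \cV\cB_n$ is free, since $\ps{G}{m} \le G$, so the entire content is the reverse inclusion: if $\ps{G}{m} \in \cV$ and $\ps{G}{n} \in \cV$, then $G \in \cV$. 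I would open by recording the one place coprimality enters structurally: choosing $a,b$ with $am + bn = 1$, every $g \in G$ satisfies $g = (g^m)^a (g^n)^b$, so $g \in \ps{G}{m}\cdot\ps{G}{n}$; as both power subgroups are normal, this forces $G = \ps{G}{m}\ps{G}{n}$.

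Next I would carry out a chain of reductions. First, since a law is violated by finitely many elements, membership in $\cV$ is detected on finitely generated subgroups, and the hypotheses descend (for $H \le G$ finitely generated, $\ps{H}{m} \le \ps{G}{m} \in \cV$), so I may assume $G$ is finitely generated. Now $G = \ps{G}{m}\ps{G}{n}$ is a product of two normal subgroups lying in the locally nilpotent variety $\cV$; by the Hirsch--Plotkin theorem the product of two normal locally nilpotent subgroups is locally nilpotent, so $G$ is locally nilpotent, and being finitely generated it is nilpotent, hence residually finite. Because $\cV$ is closed under subgroups and arbitrary direct products, it then suffices to show every finite quotient $G/K$ lies in $\cV$; the hypotheses again descend, as $\ps{(G/K)}{m}$ is a quotient of $\ps{G}{m}$. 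A finite nilpotent group is the direct product of its Sylow subgroups, and $\cV$ is closed under subproducts, so the problem reduces to a single finite $p$-group.

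For a finite $p$-group $G$ the argument closes at once: since $\gcd(m,n)=1$, the prime $p$ fails to divide at least one of $m,n$, say $p \nmid m$; then raising to the $m$-th power is a bijection on $G$, so $\ps{G}{m} = G \in \cV$, as required. I expect the main obstacle to lie not in this transparent base case but in the passage from the power subgroups back to $G$ itself, namely in establishing that a finitely generated such $G$ is nilpotent. This is precisely where local nilpotence of $\cV$ is indispensable: Fitting's theorem would only apply if the power subgroups were already known to be nilpotent (and would in any case give a weaker class bound), whereas they are given only as locally nilpotent, so the Hirsch--Plotkin theorem is the correct tool. The remaining care is bookkeeping: verifying that the closure and descent properties of varieties hold at each reduction, which is routine.
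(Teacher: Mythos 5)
Your proof is correct and follows essentially the same route as the paper: reduce to finitely generated $G$, use $G = \ps{G}{m}\ps{G}{n}$ and Hirsch--Plotkin to get local (hence actual) nilpotency, then exploit residual finiteness of finitely generated nilpotent groups, the Sylow decomposition of finite nilpotent quotients, and the coprimality trick $\ps{Q_p}{m} = Q_p$ for $p \nmid m$. The only difference is cosmetic: the paper (Theorem~\ref{thm:nilpotent_laws}) argues by contradiction with a specific violated law surviving into a Sylow factor, whereas you embed $G$ into the Cartesian product of its finite quotients and invoke the closure properties of $\cV$ --- these are the same argument in two phrasings.
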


A variety is \emph{locally nilpotent} if its finitely generated groups are nilpotent, or, equivalently, if its groups are locally nilpotent.
A topic with a rich history, dating to work of Burnside, is that of Engel laws.
The $k$-Engel law is defined recursively by $E_0(x, y) = x$ and $E_{k+1}(x, y) = [E_k(x,y), y]$.
For example, the $3$-Engel law is $[[[x, y], y], y]$.
Havas and Vaughan-Lee \cite{havasvaughanlee} proved local nilpotency for $4$-Engel groups, so we have the following:
\begin{keycor}
    \label{cor:Engel}
    Let $m$ and $n$ be coprime and let $k \leq 4$.
    A group $G$ is $k$-Engel if and only if $\ps{G}{m}$ and $\ps{G}{n}$ are both $k$-Engel.
\end{keycor}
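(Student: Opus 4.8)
The plan is to deduce this directly from Theorem~\ref{thm:any_nilpotent_variety}, once we recognise the $k$-Engel condition as membership in a locally nilpotent variety and translate the hypothesis on power subgroups into the product-variety language. Write $\mathcal{E}_k$ for the variety of $k$-Engel groups, that is, the groups satisfying the law $E_k(x,y)$. Since $E_{k+1}(x,y) = [E_k(x,y), y]$, every $k$-Engel group is $(k+1)$-Engel, so $\mathcal{E}_1 \subseteq \mathcal{E}_2 \subseteq \mathcal{E}_3 \subseteq \mathcal{E}_4$.

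First I would record that $\mathcal{E}_k$ is locally nilpotent for every $k \leq 4$. By the theorem of Havas and Vaughan-Lee \cite{havasvaughanlee} the variety $\mathcal{E}_4$ is locally nilpotent, and a subvariety of a locally nilpotent variety is again locally nilpotent: a finitely generated group in the subvariety already lies in the larger variety, hence is nilpotent. Combined with the inclusions above, each $\mathcal{E}_k$ with $k \leq 4$ is a locally nilpotent variety. This is the only place where the bound $k \leq 4$, and an external, genuinely deep theorem, is used; the rest of the argument is formal.

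Next I would invoke the dictionary between power subgroups and product varieties that underlies the restatement of Corollary~\ref{cor:nilpotency_is_detectable}. Since $\ps{G}{m}$ is the smallest normal subgroup of $G$ whose quotient has exponent dividing $m$, and since $\mathcal{E}_k$ is closed under subgroups, one has $\ps{G}{m} \in \mathcal{E}_k$ if and only if $G \in \mathcal{E}_k \cB_m$. Thus the statement that $\ps{G}{m}$ and $\ps{G}{n}$ are both $k$-Engel is exactly the statement that $G \in \mathcal{E}_k \cB_m \cap \mathcal{E}_k \cB_n$. Applying Theorem~\ref{thm:any_nilpotent_variety} with $\cV = \mathcal{E}_k$ yields $\mathcal{E}_k \cB_m \cap \mathcal{E}_k \cB_n = \mathcal{E}_k$, so this intersection is precisely the class of $k$-Engel groups; the converse implication, that subgroups of $k$-Engel groups are $k$-Engel, is immediate.

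I expect the only real obstacle to be the local nilpotency input for $\mathcal{E}_4$, which is supplied wholesale by Havas and Vaughan-Lee; correspondingly the corollary cannot be pushed beyond $k = 4$ without new results on bounded Engel groups. Everything else reduces to the two routine facts flagged above, namely the nesting $\mathcal{E}_k \subseteq \mathcal{E}_{k+1}$ and the equivalence $\ps{G}{m} \in \mathcal{E}_k \Leftrightarrow G \in \mathcal{E}_k \cB_m$, after which Theorem~\ref{thm:any_nilpotent_variety} does all the work.
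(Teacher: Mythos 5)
Your proof is correct and follows essentially the same route as the paper: local nilpotency of the $4$-Engel variety via Havas--Vaughan-Lee, the nesting $\mathcal{E}_k \subseteq \mathcal{E}_{k+1}$ to cover $k \leq 3$, and then Theorem~\ref{thm:any_nilpotent_variety} combined with the dictionary $\ps{G}{m} \in \cV \Leftrightarrow G \in \cV \cB_m$ (Proposition~\ref{prop:varietal_characterization}). You merely spell out the steps the paper leaves implicit, which is fine.
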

It is an open question whether a $k$-Engel group must be locally nilpotent for $k \geq 5$.
Recently A.~Juhasz and E.~Rips have announced that it does not have to be locally nilpotent for sufficiently large~$k$.

The class of virtually nilpotent groups plays an important role in geometric group theory, dating back to Gromov's seminal Polynomial Growth Theorem: a finitely generated group is virtually nilpotent if and only if it has polynomial growth \cite{gromov_poly}.
Because of this prominence, we also prove that virtual nilpotency is detectable in power subgroups (Corollary~\ref{cor:virtual_nilpotency}).

In contrast, solvability of a given derived length is not detectable in power subgroups; this fails immediately and in a strong sense as soon as we move beyond derived length one, that is, beyond abelian groups.

\begin{keythm}
    \label{thm:metabelian_counterexample}
    Let $\cM$ denote the variety of metabelian groups.
    Then \[
        \cM \cB_2 \cap \cM \cB_3 \neq \cM.
    \] Indeed, there exists a finite group $W$ such that $\ps{W}{2}$ and $\ps{W}{3}$ are both metabelian but $W$ is of derived length 3.
\end{keythm}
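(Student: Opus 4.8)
The plan is to reduce the statement to the construction of a single finite group, using the interpretation of product varieties from Section~\ref{subsection:varieties}: a group $G$ lies in $\cM\cB_m$ precisely when $\ps{G}{m}$ is metabelian (the verbal subgroup for $\cB_m$ is $\ps{G}{m}$, and $G/\ps{G}{m}$ has exponent dividing $m$). So it suffices to exhibit a finite group $W$ of derived length exactly $3$ for which both $\ps{W}{2}$ and $\ps{W}{3}$ are metabelian; such a $W$ then lies in $(\cM\cB_2 \cap \cM\cB_3)\setminus\cM$, which simultaneously gives the strict inequality. I would search for $W$ of the form $A \rtimes Q$, where $A$ is metabelian and $Q = \gen{s} \times \gen{t} \cong \Zmod{6}$ with $s$ of order $2$ and $t$ of order $3$. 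The reason for taking $Q$ \emph{abelian} is that the projection $W \epi Q$ then carries $\ps{W}{2}$ onto $\ps{Q}{2} = \gen{t}$ and $\ps{W}{3}$ onto $\ps{Q}{3} = \gen{s}$; consequently $\ps{W}{2} \le A \rtimes \gen{t}$ and $\ps{W}{3} \le A \rtimes \gen{s}$, so it is enough to make these two subgroups metabelian while $W$ is not. (Had $Q$ been the nonabelian group of order $6$, its cube subgroup would be all of $Q$, and this collapse would fail.)

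Since $A \rtimes \gen{s}$ and $A \rtimes \gen{t}$ have abelian quotient by $A$, their derived subgroups lie in $A$ and equal $\gen{A', [A,s]}$ and $\gen{A', [A,t]}$ respectively, while $W' = \gen{A', [A,s], [A,t]} \le A$. The construction therefore reduces to the following demand on the action: the subgroups $\gen{A', [A,s]}$ and $\gen{A', [A,t]}$ should each be \emph{abelian}, yet their join should be \emph{nonabelian}. The first two conditions force $(A\rtimes\gen{s})'' = (A\rtimes\gen{t})'' = 1$, making both factor groups metabelian, whereas the third forces $W'' = [W', W'] \neq 1$ (with $W''' = 1$ automatic), so that $W$ has derived length $3$. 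I expect this tension to be the genuine obstacle: I need two commuting automorphisms whose individual ``commutator directions'' are abelian, but which together regenerate the nonabelian part of $A$ that witnesses derived length $3$.

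A clean realisation is to take $A$ to be the Heisenberg group of order $7^3$ over $\Zmod{7}$, with generators $u, v$ and central commutator $z = [u,v]$ of order $7$. Let $s$ invert the ``$u$-axis'' ($u \mapsto u^{-1}$, $v \mapsto v$, which forces $z \mapsto z^{-1}$), of order $2$; and let $t$ act on the ``$v$-axis'' by $v \mapsto v^2$ ($u \mapsto u$, $z \mapsto z^2$), which has order $3$ since $2^3 \equiv 1 \pmod{7}$. One checks these are automorphisms and that they commute. Then $[A,s]$ lies in the abelian subgroup $\gen{u,z}$ and $[A,t]$ in the abelian subgroup $\gen{v,z}$, so $A\rtimes\gen{s}$ and $A\rtimes\gen{t}$ are metabelian and hence so are $\ps{W}{2}$ and $\ps{W}{3}$; but $\gen{[A,s],[A,t]}$ contains both $u$ and $v$ and so equals $A$, giving $W' = A$ and $W'' = \gen{z} \neq 1$. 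Thus $W = A \rtimes \Zmod{6}$, of order $7^3 \cdot 6$, has derived length $3$ and witnesses $\cM\cB_2 \cap \cM\cB_3 \neq \cM$. The routine parts are verifying that the displayed maps are automorphisms and computing the three commutator subgroups; the load-bearing idea is to design the action so that the primes $2$ and $3$ ``see'' complementary abelian pieces of $A$ while neither sees the commutator $z$ on its own.
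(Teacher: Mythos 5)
Your proposal is correct, and it takes a genuinely different route from the paper. The paper's witness is $W = (\Zmod{9}\times\Zmod{9})\rtimes_\varphi(H_3\times\Zmod{2})$, where $H_3$ is the mod-$3$ Heisenberg group: there the nonabelian complexity lives in the \emph{acting} group, the base is abelian, and the action is through explicit matrices in $\SL_2(\Zmod{9})$; metabelianness of $\ps{W}{2} \leq (\Zmod{9}\times\Zmod{9})\rtimes H_3$ rests on the delicate coincidence that $[N,K] = \Zmod{9}\times 3\Zmod{9}$ is exactly the fixed set of the matrix $Z = \varphi(z)$, and derived length $3$ comes from $W' = N\rtimes H_3'$. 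You invert this design: your nonabelian piece is the \emph{base} (the mod-$7$ Heisenberg group $A$), your acting group $\Zmod{6} = \gen{s}\times\gen{t}$ is abelian, and the two coprime parts of the action see complementary abelian subgroups, $[A,s] \leq \gen{u,z}$ and $[A,t] \leq \gen{v,z}$, whose join is all of $A$. Both arguments use the same two tools, namely Lemma~\ref{lem:basic_quotient} to trap $\ps{W}{2}$ and $\ps{W}{3}$ inside the visible subgroups ($A\rtimes\gen{t}$ and $A\rtimes\gen{s}$ in your case), and the formula $(N\rtimes K)' = (N'[N,K])\rtimes K'$; I have also checked your deferred verifications: $s$ and $t$ are commuting automorphisms of orders $2$ and $3$, and for $a = u^iv^jz^k$ one computes $[a,s] = u^{-2i}z^{-2ij-2k}$ and $[a,t] = v^jz^k$, so indeed $[A,s]=\gen{u,z}$, $[A,t]=\gen{v,z}$, $W'=A$, and $W''=\gen{z}\neq 1$. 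What your approach buys: the computations are one-line Heisenberg identities rather than matrix arithmetic mod $9$, and the construction visibly generalizes to arbitrary coprime exponents -- choosing a prime $p\equiv 1\pmod{mn}$ and letting $s\colon u\mapsto u^\alpha$, $t\colon v\mapsto v^\beta$ for units $\alpha,\beta$ of orders dividing $m$ and $n$ yields $\cM\cB_m\cap\cM\cB_n\neq\cM$ for every coprime $m,n>1$, whereas the paper explicitly remarks that its construction does not extend in an obvious way beyond the pair $\{2,3\}$. What the paper's approach buys: its example is a $\{2,3\}$-group of order $2\cdot 3^7 = 4374$, close in structure to the true minimum (the paper reports by exhaustive search that the smallest counterexample has order $1458$), while your $W$ has order $2\cdot 3\cdot 7^3 = 2058$ and necessarily involves an auxiliary prime.
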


The construction of $W$ is rather involved and ad hoc, and does not have an obvious generalization.
The smallest such $W$ has order 1458.

This is yet another example of the chasm between nilpotency and solvability. Other properties that we lose when crossing from finitely generated nilpotent groups to finitely generated solvable groups include the following: residual finiteness, solvability of the word problem, polynomial growth, and finite presentability of the relatively free group.

As the free nilpotent group of class $c$ is finitely presented, we know \emph{a priori} that Corollary~\ref{cor:nilpotency_is_detectable} will be true for fixed $m$ and $n$ if and only if it is provable in a very mechanical way, namely via a finite subpresentation of a canonical presentation for the free group of rank $c+1$ in the variety $\cN_c \cB_m \cap \cN_c \cB_n$ (in a way which we make precise in Section~\ref{subsection:general}).
Since such a finite presentation `proving' the theorem for those $m$ and $n$ exists, it is natural to ask what such a presentation looks like: what is the minimum number of relators needed, does that number depend on $m$ and $n$, and how must the specific relators change with $m$ and $n$?

We analyse in detail the abelian case, where the answer to all of these questions is: surprisingly little.
\begin{keythm}
    \label{thm:its_abelian}
    Let $m$ and $n$ be coprime. The following is a presentation of $\mathbb{Z} \times \mathbb{Z}$ \[
        \gp{ a, b }{ [a^m, b^m], [a^m, (ab)^m], [b^m, (ab)^m], [a^n, b^n], [a^n, (ab)^n], [b^n, (ab)^n] }.
    \]
\end{keythm}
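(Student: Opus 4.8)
The group defined by the presentation will be denoted $G$, with $F = F(a,b)$ the free group on $a$ and $b$. Every relator is an iterated commutator and so lies in $[F,F]$, whence the abelianisation $G^{\mathrm{ab}}$ is the free abelian group $\Z \times \Z$ on the images of $a$ and $b$, and the natural map $G \epi \Z \times \Z$ is an isomorphism exactly when $G$ is abelian. The problem thus reduces to proving that $[a,b] = 1$ in $G$; that is, one must show that the six relators --- which assert precisely that the triples $\{a^m, b^m, (ab)^m\}$ and $\{a^n, b^n, (ab)^n\}$ each consist of pairwise commuting elements --- force $G$ to be commutative.

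The plan is to show that $\ps{G}{m}$ and $\ps{G}{n}$ are both abelian; the abelian case $\cV = \cN_1$ of Theorem~\ref{thm:any_nilpotent_variety}, namely $\cN_1 \cB_m \cap \cN_1 \cB_n = \cN_1$, then places $G$ in $\cN_1$ and finishes the proof. As a preliminary I would pin down the quotient $G/\ps{G}{m}$: it has exponent dividing $m$ and is generated by the images $\bar a, \bar b$, while the $n$-block relators give $[\bar a^n, \bar b^n] = 1$. Since $n$ is invertible modulo $m$, each of $\bar a, \bar b$ is a power of $\bar a^n, \bar b^n$ respectively, so $\bar a$ and $\bar b$ commute; comparing with $G^{\mathrm{ab}} = \Z \times \Z$ identifies $G/\ps{G}{m}$ as $\Z/m\Z \times \Z/m\Z$. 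In particular $[G,G] \subseteq \ps{G}{m}$, so $\ps{G}{m} = \gen{a^m, b^m}[G,G]$ with $a^m, b^m$ commuting, and symmetrically $[G,G] \subseteq \ps{G}{n}$. It is also worth recording that $G = \ps{G}{m}\ps{G}{n}$: choosing $\alpha m + \beta n = 1$ gives $g = (g^{\alpha})^m (g^{\beta})^n$, since powers of $g$ commute. Consequently, once $\ps{G}{m}$ and $\ps{G}{n}$ are known to be abelian and normal, Fitting's Theorem yields that $G$ is nilpotent of class at most $2$; then $[a,b]$ is central and the relators collapse to $[a,b]^{m^2} = [a^m,b^m] = 1$ and $[a,b]^{n^2} = 1$, forcing $[a,b] = 1$ by coprimality of $m^2$ and $n^2$. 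This gives a self-contained endgame independent of Theorem~\ref{thm:any_nilpotent_variety}.

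The crux, and the step I expect to be the main obstacle, is proving that $\ps{G}{m}$ is abelian; by the reduction above this amounts to showing that $[G,G]$ is abelian and is centralised inside $\ps{G}{m}$. I would attack this by passing to the free metabelian group on $a,b$ and translating the six relators, via Fox derivatives, into relations in the Alexander module $[G,G]/[[G,G],[G,G]]$, regarded as a module over $\Z[G^{\mathrm{ab}}] = \Z[s^{\pm 1}, t^{\pm 1}]$; for rank two this module is free of rank one on the class of $[a,b]$, so the task becomes showing that the relators generate the unit ideal. The images of $[a^m,b^m]$ and $[a^n,b^n]$ carry the cyclotomic-type factors $1 + s + \cdots + s^{m-1}$ and their $n$-analogues, and the content of the computation is that the ideal generated by all six relators --- here all three per block, not merely $[a^m,b^m]$, are needed to make this ideal large enough --- is, by coprimality of $m$ and $n$, the whole ring, annihilating the module and hence forcing $[a,b]$ to be trivial modulo $[[G,G],[G,G]]$. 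The delicate remaining point is to bootstrap this metabelian conclusion back to $G$ itself, either by first verifying that $G$ is metabelian (so that the module vanishing already gives $[G,G] = 1$) or by feeding the resulting abelianness of the power subgroups into the detectability of commutativity; carrying out and justifying this module annihilation uniformly in $m$ and $n$ is where the real work lies.
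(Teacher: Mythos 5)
Your reduction and your endgame are fine, and in fact the endgame coincides with the paper's: once $[a,b]$ is known to be central in $G$, the relators give $[a,b]^{m^2} = [a^m,b^m] = 1$ and $[a,b]^{n^2} = [a^n,b^n] = 1$, and coprimality of $m^2$ and $n^2$ kills $[a,b]$. Your module computation is also correct in substance: $F'/F''$ is indeed free of rank one over $\Z[s^{\pm 1},t^{\pm 1}]$ on the class of $[a,b]$, the six relators map (up to units) to $\phi_k(s)\phi_k(t)$, $\phi_k(s)\phi_k(st)$, $\phi_k(t)\phi_k(st)$ for $k \in \{m,n\}$, where $\phi_k(u) = 1 + u + \cdots + u^{k-1}$, and the ideal they generate is the unit ideal: any maximal ideal containing it has finite residue field of characteristic $p$, and at a common zero $(s_0,t_0)$ at least two of the three factors in each block vanish, forcing $s_0^m = t_0^m = (s_0t_0)^m = 1$ and likewise for $n$, hence $s_0 = t_0 = 1$, whence $p \mid m$ and $p \mid n$, contradicting coprimality.

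The genuine gap is the step you defer to the end, and it is not merely delicate --- as proposed, it is fatal. The Alexander module computation proves only that $G'/G'' = 0$, i.e.\ $G' = G''$: the \emph{metabelianization} of $G$ is $\Z \times \Z$. This is strictly weaker than the theorem; it says $G'$ is a perfect group, and perfect derived subgroups are abundantly non-trivial (the metabelianization of $\Z^2 \times A_5$ is also $\Z \times \Z$). By construction the method sees only $G/G''$, so no refinement of the module computation can rule this out. That makes both of your bootstraps circular: (i) ``first verifying that $G$ is metabelian'' means proving $G'' = 1$, which is invisible to an argument conducted modulo $G''$, and you offer no other tool; (ii) feeding into detectability of commutativity requires $\ps{G}{m}$ and $\ps{G}{n}$ to be abelian as subgroups of $G$ itself, and by your own reduction $\ps{G}{m} = \gen{a^m,b^m}[G,G]$ abelian already entails $[G,G]$ abelian --- again the unproven metabelianity. (If you could independently show $G$ is solvable, or even residually solvable, then $G' = G''$ would indeed force $G' = 1$; nothing in the proposal provides this.) The paper's proof avoids solvable quotients entirely: it introduces an auxiliary $5$-generator, $9$-relator group $\Gamma$ that surjects onto $G$ via $x \mapsto a^{qn}$, $y \mapsto b^{qn}$, $z \mapsto (ab)^{qn}$ (where $pm = qn + 1$), and establishes by direct commutator manipulations in $\Gamma$ --- using a symmetry automorphism swapping $a$ and $b$ --- that $[a,b]$ is central in $\gen{a,b} \leq \Gamma$; centrality then descends along the surjection to $G$, and the coprime-powers endgame concludes. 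Your approach would become a correct alternative proof only if you supply an argument, outside the metabelian quotient, that $G$ is solvable or that $[a,b]$ is central in $G$ itself.
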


The structure of this article is as follows, and reflects the structure of the introduction we have just given.
In Section~\ref{section:basic_notions} we set up some basic theory.
We prove positive results, including Theorem~\ref{thm:any_nilpotent_variety}, in Section~\ref{section:nilpotent}.
We then prove the negative result Theorem~\ref{thm:metabelian_counterexample} in Section~\ref{section:solvable}.
The complexity analysis with Theorem~\ref{thm:its_abelian} follows in Section~\ref{section:complexity}, and finally we record some open problems in Section~\ref{section:problems}.

\section{Basic notions}
\label{section:basic_notions}

In this section we develop some basic tools which will be helpful, including aspects of the theory of group varieties.
We also probe the definition of detectability in power subgroups: why specifically power subgroups, and what about the non-coprime case?

For the first question, there are easy examples showing that we cannot in general determine if a group law is satisfied just by examining two arbitrary subgroups, even if they are assumed to be normal and to generate the whole group: it is essential that we examine the characteristic, ``verbally defined'' power subgroups.
For instance, the integral Heisenberg group $\gp{x,y,z}{[x,y]=z, [x,z] = [y, z] = 1}$ is the product of the two normal subgroups $\gen{x,z}$ and $\gen{y,z}$, which are both isomorphic to $\Z \times \Z$, however the whole group is not abelian.

We now turn to the question of coprimality.
For a property $\cP$ of groups, we say a group $G$ \emph{has $\cP$ coprime power subgroups} if there exist coprime $m$ and $n$ such that $\ps{G}{m}$ and $\ps{G}{n}$ both have the property $\cP$.
For example, using this terminology we can state the theorem of \cite{gv} as: a finite group with abelian coprime power subgroups is abelian.

\textbf{Notation.} We write conjugation $g^h = h^{-1} g h$ and commutator $[g, h] = g^{-1} h^{-1} g h$.

An easily proved property of power subgroups (and verbal subgroups in general, see below for the definition) is the following:
\begin{lem}
    \label{lem:basic_quotient}
    Let $\varphi \colon G \epi Q$ be a surjective group homomorphism and $m$ an integer.
    Then $\varphi(\ps{G}{m}) = \ps{Q}{m}$ and $\ps{G}{m} \leq \varphi^{-1}(\ps{Q}{m})$.
    \qed
\end{lem}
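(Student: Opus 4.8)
The plan is to reduce everything to the elementary fact that a group homomorphism carries the subgroup generated by a set $S$ to the subgroup generated by the image $\varphi(S)$, combined with the compatibility of $\varphi$ with taking $m$-th powers, namely $\varphi(g^m) = \varphi(g)^m$. The two claims are not really independent: the second is an immediate consequence of the first, so the substance lies entirely in establishing the equality $\varphi(\ps{G}{m}) = \ps{Q}{m}$.

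First I would unwind the definition $\ps{G}{m} = \gen{g^m : g \in G}$ and apply the fact that the image under a homomorphism of a subgroup generated by a set is the subgroup generated by the images of that set. This gives $\varphi(\ps{G}{m}) = \gen{\varphi(g^m) : g \in G} = \gen{\varphi(g)^m : g \in G}$, where the last step uses $\varphi(g^m) = \varphi(g)^m$. At this stage one has only the containment of generating sets $\{\varphi(g)^m : g \in G\} \subseteq \{q^m : q \in Q\}$. The point at which surjectivity enters is precisely here: because $\varphi$ is onto, the element $\varphi(g)$ ranges over all of $Q$ as $g$ ranges over $G$, so the two generating sets in fact coincide, whence $\varphi(\ps{G}{m}) = \gen{q^m : q \in Q} = \ps{Q}{m}$.

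The second claim then follows with no further work: from $\varphi(\ps{G}{m}) = \ps{Q}{m}$ we see that every element of $\ps{G}{m}$ maps into $\ps{Q}{m}$, so $\ps{G}{m} \leq \varphi^{-1}(\ps{Q}{m})$. There is no genuine obstacle in this argument; the only subtlety worth flagging is that surjectivity is exactly what upgrades the containment $\gen{\varphi(g)^m : g \in G} \leq \ps{Q}{m}$ to an equality, and that the same reasoning applies verbatim to any verbal subgroup, since it uses nothing about the word $x^m$ beyond its being a fixed element of $F_\infty$.
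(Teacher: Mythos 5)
Your proof is correct and is exactly the argument the paper has in mind: the lemma is stated with a \qed because the equality $\varphi(\ps{G}{m}) = \ps{Q}{m}$ follows immediately from the facts that homomorphic images of generated subgroups are generated by the images and that $\varphi(g^m) = \varphi(g)^m$, with surjectivity upgrading containment of generating sets to equality. Your closing observation that the argument applies verbatim to any verbal subgroup is also precisely the generality the paper flags ("and verbal subgroups in general").
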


Power subgroups pick up torsion elements:
\begin{lem}
    \label{lem:super_basic}
    Suppose that $g$ has finite order $r$ coprime to $m$.
    Then $g \in \gen{g^m}$.
\end{lem}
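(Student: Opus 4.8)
The plan is to exploit the coprimality hypothesis through Bézout's identity. Since $\gcd(m, r) = 1$, there exist integers $s$ and $t$ with $sm + tr = 1$. The key observation is that $g^r = 1$, because $g$ has order $r$, so the term $tr$ in any such exponent relation contributes trivially.

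Concretely, I would simply compute
\[
    g = g^{sm + tr} = (g^m)^s (g^r)^t = (g^m)^s,
\]
using that all powers of $g$ commute with one another (they lie in the cyclic subgroup $\gen{g}$) and that $g^r = 1$. This exhibits $g$ as an explicit power of $g^m$, whence $g \in \gen{g^m}$, as required.

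There is no genuine obstacle here: the statement is a standard fact about cyclic groups, and the sole input is the existence of a Bézout relation, which is precisely where the hypothesis $\gcd(m, r) = 1$ is used. An equally short alternative is to argue by orders: $g^m$ generates a subgroup of $\gen{g}$ of order $r / \gcd(m, r) = r$, so $\gen{g^m} = \gen{g}$ and in particular $g \in \gen{g^m}$. I would favour the Bézout computation, as it is the most direct and makes the role of coprimality transparent.
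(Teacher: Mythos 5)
Your proposal is correct and is essentially identical to the paper's own proof: both apply Bézout's identity to write $1 = sm + tr$ and compute $g = (g^m)^s(g^r)^t = (g^m)^s$. The alternative order-counting argument you mention is also valid, but the Bézout computation is exactly what the paper does.
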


\begin{proof}
    There exist integers $x$ and $y$ such that $xr + ym = 1$.
    Now \[ g = g^{xr + ym} = (g^r)^x (g^m)^y = (g^m)^y. \qedhere\]
\end{proof}

\subsection{Varieties}
\label{subsection:varieties}

We give a self-contained treatment of some basics from the theory of varieties of groups.
For further details, the reader is referred to Hanna Neumann's classic book \cite{h_neumann_book}.

\begin{defn}[Variety of groups]
    A \emph{variety} of groups is the class of all groups satisfying each one of a (possibly infinite) set of laws.
\end{defn}

\begin{eg}
    Corresponding to the examples of laws given above (immediately before Definition~\ref{defn:detectable}), we have the following examples of varieties:
    \begin{itemize}
        \item $\cA$ -- the variety of abelian groups
        \item $\cB_m$ -- the `Burnside' variety of groups of exponent $m$ (or exponent dividing $m$, depending on the definition of exponent used)
        \item $\cM$ -- the variety of metabelian groups.
        \item $\cN_c$ -- the variety of nilpotent groups of nilpotency class at most $c$
    \end{itemize}
\end{eg}

\begin{prop}
    A variety is closed under the operations of taking subgroups, quotients, and arbitrary Cartesian products.
    \qed
\end{prop}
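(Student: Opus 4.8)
The plan is to unwind the definition of a variety and verify each of the three closure properties directly, the common engine being that a law $w$ is an element of the free group $F_\infty$, so the word map it induces respects group homomorphisms and is computed coordinatewise on products. Write $\cV$ for the variety defined by a set $W$ of laws, so that a group $G$ lies in $\cV$ precisely when $\varphi(w) = 1$ for every $w \in W$ and every homomorphism $\varphi \colon F_\infty \to G$; equivalently, $w(g_1, \dots, g_k) = 1$ for all choices $g_1, \dots, g_k \in G$, where $w$ is a word in $x_1, \dots, x_k$.

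First, for subgroups, suppose $G \in \cV$ and $H \leq G$. Any tuple of elements of $H$ is in particular a tuple of elements of $G$, so evaluating any $w \in W$ on it yields $1$; hence $H$ satisfies every law of $W$, so $H \in \cV$. For quotients, let $\varphi \colon G \epi Q$ be surjective with $G \in \cV$. Given $q_1, \dots, q_k \in Q$, choose preimages $g_i$ with $\varphi(g_i) = q_i$; since $\varphi$ is a homomorphism and $w$ is a word built from multiplication and inversion, the word map commutes with $\varphi$, giving $w(q_1, \dots, q_k) = \varphi(w(g_1, \dots, g_k)) = \varphi(1) = 1$. Thus $Q$ satisfies each $w \in W$ and so $Q \in \cV$.

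For Cartesian products, let $(G_i)_{i \in I}$ be a family in $\cV$ and set $G = \prod_{i \in I} G_i$. The key observation is that multiplication and inversion in $G$ are performed coordinatewise, and hence so is the word map: the $i$-th coordinate of $w(g^{(1)}, \dots, g^{(k)})$ equals the evaluation of $w$ on the $i$-th coordinates of the arguments, which is $1$ because $G_i \in \cV$. As every coordinate is trivial, the whole element is trivial, so $G$ satisfies $w$ for all $w \in W$ and lies in $\cV$.

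I do not anticipate a genuine obstacle here: the result is essentially a formal consequence of the definition. The single conceptual point, identical in all three cases, is that a law—being a word—defines a map that is natural with respect to homomorphisms and compatible with the coordinatewise structure of a product, and the only work is to make this naturality explicit. This same principle will underpin the more substantial arguments about verbal and power subgroups later in the paper.
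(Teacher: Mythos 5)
Your proof is correct and complete: the paper states this proposition without proof (it is marked as standard with a \qed), and your argument---that word maps commute with homomorphisms and are computed coordinatewise in products, yielding closure under subgroups, quotients, and arbitrary Cartesian products---is exactly the standard verification the author takes for granted.
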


In fact, \emph{every} class of groups which is closed under these operations is a variety (see \cite[15.51]{h_neumann_book}).

\begin{defn}[Product variety]
    \label{defn:product}
    Let $\cU$ and $\cV$ be varieties of groups.
    We define the \emph{product variety} $\cU \cV$ to be the class of groups which are an extension of a group from $\cU$ by a group from $\cV$.
    That is, $G \in \cU \cV$ if there exists $N \normal G$ such that $N \in \cU$ and $G / N \in \cV$.
    We define the product of two classes of groups similarly.
\end{defn}

\begin{eg}
    Let $\cA$ and $\cM$ denote the varieties of abelian and metabelian groups, respectively. Then $\cM = \cA \cA$.
\end{eg}

We check that the product variety is indeed a variety as follows.
Let $\cV(G) \leq G$ denote the \emph{verbal subgroup} of $G$ corresponding to $\cV$, that is, the subgroup generated by the images of the defining laws of $\cV$ under all maps $F_\infty \to G$.
Thus $G \in \cV$ if and only if the verbal subgroup $\cV(G) = 1$.
As defining laws for $\cU \cV$ we take the images of the defining laws of $\cU$ under all maps $F_\infty \to \cV(F_\infty)$.
Let $G$ be a group and suppose $N \normal G$, with $q \colon G \to G / N$ the natural homomorphism.
Then $\cV(G/N) = q(\cV(G))$ (cf.\@\xspace~Lemma~\ref{lem:basic_quotient}), so the quotient $G / N$ is in $\cV$ if and only if $\cV(G) \leq N$.
Thus $G \in \cU \cV$ if and only if $N = \cV(G)$ is in the variety $\cU$.
Every map $F_\infty \to \cV(G)$ factors through some map $F_\infty \to \cV(F_\infty)$, so we see that $\cV(G) \in \cU$ if and only if it satisfies every law which is the image of a defining law of $\cU$ in $\cV(F_\infty)$.
We will explore this further in Section~\ref{section:complexity}

\begin{prop}[{\cite[Theorem 21.51]{h_neumann_book}}]
    The product of varieties of groups is associative.
\end{prop}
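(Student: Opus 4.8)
The plan is to reduce associativity to a single identity about verbal subgroups, thereby sidestepping the awkwardness of normal series whose middle terms are only subnormal. The crucial tool is the characterization established above: since every variety is closed under subgroups, a group $G$ lies in a product variety $\cU\cV$ if and only if $\cV(G) \in \cU$. (Indeed, $G \in \cU\cV$ requires $N \normal G$ with $\cV(G) \le N \in \cU$; the minimal choice $N = \cV(G)$ works precisely when $\cV(G) \in \cU$, and if any admissible $N$ exists then $\cV(G) \le N$ forces $\cV(G) \in \cU$ by subgroup-closure.) Granting this, for varieties $\cU, \cV, \cW$ and any group $G$ I would compute
\[ G \in (\cU\cV)\cW \iff \cW(G) \in \cU\cV \iff \cV(\cW(G)) \in \cU, \]
while directly
\[ G \in \cU(\cV\cW) \iff (\cV\cW)(G) \in \cU. \]
Hence it suffices to prove the verbal-subgroup identity $(\cV\cW)(G) = \cV(\cW(G))$ as subgroups of $G$.

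To establish the identity I would unwind the construction of defining laws for a product variety given just above the statement. The defining laws of $\cV\cW$ are the words $w(W_1, \dots, W_k)$, where $w(x_1, \dots, x_k)$ ranges over the defining laws of $\cV$ and $W_1, \dots, W_k$ range over $\cW(F_\infty)$. Thus $(\cV\cW)(G)$ is generated by the elements $w(\varphi(W_1), \dots, \varphi(W_k))$ over all $\varphi \colon F_\infty \to G$; since each $W_i$ is a product of $\cW$-verbal words, $\varphi(W_i)$ is a product of $\cW$-verbal values of $G$ and so lies in $\cW(G)$, giving one containment. For the reverse containment I would show that every finite tuple $(h_1, \dots, h_k) \in \cW(G)^k$ arises as $(\varphi(W_1), \dots, \varphi(W_k))$ for a single $\varphi \colon F_\infty \to G$ and suitable $W_i \in \cW(F_\infty)$: writing each $h_i$ as a product of verbal values $v(g_1, \dots)$, only finitely many elements $g_1, \dots, g_N$ of $G$ occur in total, so setting $\varphi(x_t) = g_t$ for $t \le N$ and letting $W_i$ be the corresponding products of $\cW$-laws in the free generators realizes the tuple. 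Consequently the generating set of $(\cV\cW)(G)$ is exactly $\{\, w(h_1, \dots, h_k) : w \text{ a defining law of } \cV,\ h_i \in \cW(G) \,\}$, which by definition generates $\cV(\cW(G))$.

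The main obstacle is precisely this simultaneous realization step: one must verify that an arbitrary element of $\cW(G)$ — not merely a single verbal value, but a product of them — is the image of an element of $\cW(F_\infty)$, and moreover that a whole $k$-tuple can be hit by one homomorphism, so that the two generating sets coincide on the nose; this is where having infinitely many free generators $x_1, x_2, \dots$ is used. Once the identity $(\cV\cW)(G) = \cV(\cW(G))$ is in hand, associativity is immediate from the two displayed equivalences, as both $(\cU\cV)\cW$ and $\cU(\cV\cW)$ are then characterized by the single condition $\cV(\cW(G)) \in \cU$. The advantage of this verbal-subgroup formulation over a direct attack is clear: expanding $G \in (\cU\cV)\cW$ yields a subgroup that is only subnormal in $G$, whereas $G \in \cU(\cV\cW)$ yields a genuinely normal series, and the identity above reconciles the two by always working with the canonical, fully invariant verbal subgroups.
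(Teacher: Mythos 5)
Your proof is correct, but there is nothing in the paper to compare it against: the paper does not prove this proposition at all, it simply cites it as Theorem 21.51 of Hanna Neumann's book \cite{h_neumann_book}. What you have written is in essence the standard argument from that book, and it meshes well with the machinery the paper sets up immediately beforehand: the equivalence $G \in \cU\cV \iff \cV(G) \in \cU$ is exactly what the paper establishes when checking that a product of varieties is a variety, and your list of defining laws for $\cV\cW$ is the paper's own construction. Granting those, your reduction of associativity to the verbal-subgroup identity $(\cV\cW)(G) = \cV(\cW(G))$ is sound, and both containments are handled correctly: one direction because every homomorphism $F_\infty \to G$ carries $\cW(F_\infty)$ into $\cW(G)$, and the reverse direction by your simultaneous-realization argument, which is indeed the one non-trivial point --- it is precisely where the infinite rank of $F_\infty$, together with the fact that each law involves only finitely many variables and each element of $\cW(G)$ only finitely many verbal values, gets used. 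One pedantic correction: an element of $\cW(G)$ is a product of values of $\cW$-laws \emph{and their inverses}, not just of values; this costs nothing, since the inverse words $w(x_{j_1}, \dots, x_{j_n})^{-1}$ also lie in $\cW(F_\infty)$, so your words $W_i \in \cW(F_\infty)$ with $\varphi(W_i) = h_i$ still exist.
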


Thus the varieties form a monoid under product, and the unit is the variety $\mathbb{1}$ consisting of only the trivial group.
We introduce a more restrictive notion of product for two classes of groups.
\begin{defn}[Normal product class]
    \label{defn:odot}
    Let $\cC$ and $\cD$ be classes of groups.
    We define the \emph{normal product} of $\cC$ and $\cD$, denoted $\cC \nprod \cD$, to be the class of groups $G$ with normal subgroups $C \in \cC$ and $D \in \cD$ such that $G = CD$.
\end{defn}

In particular, $\cD \nprod \cC = \cC \nprod \cD \subseteq \cC \cD \cap \cD \cC$.
This last inclusion can be proper, for example, $\cA \nprod \cA \subset \cN_2$ (by Theorem~\ref{thm:fitting}, due to Fitting), whereas $\cA \cA = \cM$.

\begin{prop}
    \label{prop:varietal_characterization}
    Let $G$ be a group, $\cV$ a variety of groups, and $m$ an integer.
    Recall that $\cB_m$ denotes the Burnside variety of exponent $m$.
    The power subgroup $\ps{G}{m} \in \cV$ if and only if $G \in \cV \cB_m$.
\end{prop}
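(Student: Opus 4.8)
The plan is to recognise that the power subgroup $\ps{G}{m}$ is nothing but the verbal subgroup of $G$ associated to the Burnside variety $\cB_m$, and then to read both implications off the machinery already assembled above. Concretely, $\cB_m$ is defined by the single law $x_1^m$, so its verbal subgroup is generated by the images of that law under all maps $F_\infty \to G$, that is, by $\{g^m : g \in G\}$, which is exactly $\ps{G}{m}$. In particular $\ps{G}{m} \normal G$, and since every element of $G/\ps{G}{m}$ has $m$-th power trivial, the quotient $G/\ps{G}{m}$ has exponent dividing $m$ and hence lies in $\cB_m$.

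With this identification in hand, both directions are short. For the forward direction I would assume $\ps{G}{m} \in \cV$ and simply take $N = \ps{G}{m}$: this is a normal subgroup with $N \in \cV$ and $G/N \in \cB_m$, so $G \in \cV\cB_m$ directly from the definition of the product variety. For the converse, suppose $G \in \cV\cB_m$, so that there is some $N \normal G$ with $N \in \cV$ and $G/N \in \cB_m$. Since $G/N$ then has exponent dividing $m$, every $m$-th power $g^m$ lies in $N$, whence $\ps{G}{m} \leq N$; as $\cV$ is closed under subgroups, it follows that $\ps{G}{m} \in \cV$.

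I do not expect a genuine obstacle: the only point requiring care is the identification $\ps{G}{m} = \cB_m(G)$, after which everything reduces to closure of $\cV$ under subgroups together with the elementary observation that $G/N \in \cB_m$ precisely when $\ps{G}{m} \leq N$. Indeed, the whole statement is just the special case $\cU = \cV$, $\cW = \cB_m$ of the general principle recorded above, namely that $G \in \cU\cW$ if and only if the verbal subgroup $\cW(G)$ lies in $\cU$; I would either invoke that principle directly or, for self-containedness, spell out the two lines of each inclusion as above.
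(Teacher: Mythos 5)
Your proof is correct and follows essentially the same route as the paper: both rest on the identification $\ps{G}{m} = \cB_m(G)$ together with the general principle, already established in the discussion of product varieties, that $G \in \cU\cW$ if and only if the verbal subgroup $\cW(G)$ lies in $\cU$. The paper simply cites that principle in one line, whereas you also spell out the two short inclusions; the content is identical.
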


\begin{proof}
    As in the proof that a product variety is a variety, we see that $G \in \cV \cB_m$ if and only if the verbal subgroup $\cB_m(G) \in \cV$, and $\cB_m(G) = \ps{G}{m}$.
\end{proof}

With this proposition in hand, we define a variety $\cV$ to be \emph{detectable in power subgroups} if, for all coprime $m$ and $n$, we have $\cV \cB_m \cap \cV \cB_n = \cV$ (the intersection of varieties is simply the intersection as classes of groups).
In this article, we mostly encounter varieties that are finitely based, that is, that can be defined by finitely many laws, and thus by a single law (the concatenation of these laws written in distinct variables $x_i$); in this case, detectability of the variety is simply detectability of such a single defining law.
It will be useful for us to understand how taking products of varieties interacts with taking intersection.
Although we do not have left-distributivity, we do have some upper and lower bounds, as the next proposition indicates.

\begin{prop}
    \label{prop:upper_and_lower}
    For all varieties $\cU$, $\cV$, $\cW$ we have \[
        \cU (\cV \cap \cW) \leq \cU \cV \cap \cU \cW \subseteq (\cU \nprod \cU) (\cV \cap \cW).
    \]
\end{prop}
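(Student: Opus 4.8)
The plan is to work throughout with verbal subgroups and the characterization established above that $G \in \cU\cX$ if and only if the verbal subgroup $\cX(G)$ lies in $\cU$. The one extra ingredient I would record first is a formula for the verbal subgroup of an intersection of varieties: since $\cV \cap \cW$ is defined by the union of the defining laws of $\cV$ and of $\cW$, its verbal subgroup is $(\cV \cap \cW)(G) = \cV(G)\,\cW(G)$, the product of the two verbal subgroups. Here I use that verbal subgroups are fully invariant, hence normal, so that this set-theoretic product is already a subgroup. With these observations in hand, both inclusions become short.

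For the left-hand inclusion $\cU(\cV \cap \cW) \leq \cU\cV \cap \cU\cW$, I would argue directly from the definition of product variety. If $G \in \cU(\cV \cap \cW)$, choose $N \normal G$ with $N \in \cU$ and $G/N \in \cV \cap \cW$. Then the single witness $N$ serves for both factors: $G/N \in \cV$ shows $G \in \cU\cV$, and $G/N \in \cW$ shows $G \in \cU\cW$, so $G$ lies in the intersection. (Equivalently, from $\cV(G) \leq \cV(G)\cW(G) = (\cV \cap \cW)(G) \in \cU$ and closure of $\cU$ under subgroups one gets $\cV(G) \in \cU$, and symmetrically $\cW(G) \in \cU$.)

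For the right-hand inclusion $\cU\cV \cap \cU\cW \subseteq (\cU \nprod \cU)(\cV \cap \cW)$, the key is to exhibit the correct normal subgroup. Suppose $G \in \cU\cV \cap \cU\cW$; by the characterization this means $\cV(G) \in \cU$ and $\cW(G) \in \cU$. I would then set $K = (\cV \cap \cW)(G) = \cV(G)\,\cW(G)$. By construction $G/K \in \cV \cap \cW$, which supplies the required quotient. Moreover $K$ is the product of the two normal subgroups $\cV(G)$ and $\cW(G)$, each of which lies in $\cU$, so $K \in \cU \nprod \cU$ by the definition of the normal product class. Hence $K$ witnesses $G \in (\cU \nprod \cU)(\cV \cap \cW)$.

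I do not expect a serious obstacle: the entire statement reduces to the observation that the verbal subgroup of the intersection factors as the product $\cV(G)\,\cW(G)$ of normal $\cU$-subgroups, which is precisely the data defining membership in $\cU \nprod \cU$. The only point requiring genuine care is the identity $(\cV \cap \cW)(G) = \cV(G)\,\cW(G)$ — that the verbal subgroup generated by the combined set of laws coincides with the product of the individual verbal subgroups — and this follows at once from the normality of verbal subgroups.
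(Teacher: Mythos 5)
Your proof is correct, and its core coincides with the paper's: both exhibit the required kernel as a product of two normal $\cU$-subgroups of $G$, and both handle the first inclusion by the trivial observation that a single witness works for $\cU\cV$ and $\cU\cW$ simultaneously. The difference is in the choice of witnesses for the second inclusion. The paper takes \emph{arbitrary} normal subgroups $N_\cV, N_\cW \in \cU$ with $G/N_\cV \in \cV$ and $G/N_\cW \in \cW$, sets $N = N_\cV N_\cW$, and notes that $G/N$, being a common quotient of $G/N_\cV$ and $G/N_\cW$, lies in $\cV \cap \cW$; this is purely definitional. You instead take the canonical witnesses $\cV(G)$ and $\cW(G)$ --- legitimate, since the discussion following Definition~\ref{defn:product} establishes that $G \in \cU\cV$ if and only if $\cV(G) \in \cU$ --- and verify the quotient condition via the identity $(\cV \cap \cW)(G) = \cV(G)\,\cW(G)$, which is indeed correct: the union of defining laws for $\cV$ and $\cW$ defines $\cV \cap \cW$, and normality of verbal subgroups makes the subgroup they generate equal to their product. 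Your route gives a canonical kernel and stays inside the verbal-subgroup formalism the paper sets up; the paper's route is marginally more elementary, needing no verbal identity. Note, too, that your identity can be bypassed even within your own argument: $G/\cV(G)\cW(G)$ is a quotient of $G/\cV(G) \in \cV$ and of $G/\cW(G) \in \cW$, hence lies in $\cV \cap \cW$ directly --- which is exactly the paper's common-quotient observation.
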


(We write $\subseteq$ as the last term is not a variety in general.)

\begin{proof}
    The first inclusion is immediate, as $\cV \cap \cW \leq \cV$ implies that $\cU (\cV \cap \cW)$ is contained in $\cU \cV$, and similarly in $\cU \cW$.

    Now suppose that $G \in \cU \cV \cap \cU \cW$.
    This means $G$ has normal subgroups $N_\cV, N_\cW \in \cU$ such that $G / N_\cV \in \cV$ and $G / N_\cW \in \cW$.
    Let $N = N_\cV N_\cW \normal G$.
    The group $G / N$ will be a common quotient of $G / N_\cV$ and $G / N_\cW$, and thus in $\cV \cap \cW$, as varieties are closed under taking quotients. The kernel $N_\cV N_\cW$ is then a product of normal subgroups in $\cU$, so it is in the class $\cU \nprod \cU$.
\end{proof}

\begin{cor}
    \label{cor:metaU}
    Let $m$ and $n$ be coprime. Then for every variety $\cU$, \[
        \cU \cB_m \cap \cU \cB_n \leq \cU \nprod \cU.
    \]
\end{cor}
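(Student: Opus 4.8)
The plan is to read this off directly from Proposition~\ref{prop:upper_and_lower}, specialized to the two Burnside varieties. Taking $\cV = \cB_m$ and $\cW = \cB_n$ in that proposition gives immediately
\[
    \cU \cB_m \cap \cU \cB_n \subseteq (\cU \nprod \cU)(\cB_m \cap \cB_n),
\]
so everything reduces to identifying the intersection $\cB_m \cap \cB_n$ and then simplifying the product on the right.

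The key point, and the only place coprimality enters, is that $\cB_m \cap \cB_n = \mathbb{1}$. Indeed, a group lying in both $\cB_m$ and $\cB_n$ has exponent dividing both $m$ and $n$, hence dividing $\gcd(m, n) = 1$, and so is trivial. With this in hand I would simplify the right-hand side using the fact that multiplying any class by the trivial variety $\mathbb{1}$ returns that class: a group $G$ lies in $\cX \mathbb{1}$ exactly when it has a normal subgroup $N \in \cX$ with $G / N$ trivial, which forces $N = G$, so $\cX \mathbb{1} = \cX$. Applying this with $\cX = \cU \nprod \cU$ (recalling that the product of classes is defined even when the factors are not varieties) yields $(\cU \nprod \cU)(\cB_m \cap \cB_n) = \cU \nprod \cU$, and combining with the displayed inclusion completes the argument.

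In truth there is no real obstacle here: all the work has already been done in Proposition~\ref{prop:upper_and_lower}, and this corollary is simply the observation that when the second factors are coprime Burnside varieties their intersection collapses to $\mathbb{1}$, leaving only the $\cU \nprod \cU$ term. The mild care required is merely to note that $\cU \nprod \cU$ need not be a variety, so the identity $\cX \mathbb{1} = \cX$ must be invoked at the level of classes rather than varieties.
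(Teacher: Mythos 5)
Your proposal is correct and is essentially the paper's own proof: the paper likewise sets $\cV = \cB_m$, $\cW = \cB_n$ in Proposition~\ref{prop:upper_and_lower} and notes $\cB_m \cap \cB_n = \cB_{\operatorname{gcd}(m,n)} = \mathbb{1}$. Your only addition is spelling out the (correct, if routine) simplification $(\cU \nprod \cU)\,\mathbb{1} = \cU \nprod \cU$ at the level of classes, which the paper leaves implicit.
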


\begin{proof}
    Set $\cV = \cB_m$, $\cW = \cB_n$ in Proposition~\ref{prop:upper_and_lower} and note $\cB_m \cap \cB_n = \cB_{\operatorname{gcd}(m,n)} = \mathbb{1}$.
\end{proof}

In contrast, we do have right-distributivity of product of varieties over intersection:
\begin{prop}
    \label{prop:on_your_right}
    For all varieties $\cU, \cV, \cW$ we have \[
        (\cU \cap \cV) \cW = \cU \cW \cap \cV \cW.
    \]
\end{prop}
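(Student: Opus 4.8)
The plan is to reduce the whole statement to the verbal-subgroup characterization of product varieties established in the discussion preceding Proposition~\ref{prop:varietal_characterization}. Recall that for any variety $\cW$ the verbal subgroup $\cW(G)$ is the smallest normal subgroup $N \normal G$ with $G/N \in \cW$ (since $\cW(G/N) = q(\cW(G))$, so $G/N \in \cW$ holds exactly when $\cW(G) \le N$). Combined with closure of varieties under subgroups, this yields, for any variety $\cA$, the equivalence $G \in \cA\cW \iff \cW(G) \in \cA$: if some normal $N \in \cA$ witnesses $G \in \cA\cW$, then $\cW(G) \le N$ forces $\cW(G) \in \cA$ as well, and conversely $\cW(G)$ itself witnesses membership. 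I would state this equivalence first, as it is where the real work sits.

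With that in hand the proposition becomes a one-line membership identity. Applying the equivalence three times, I would observe that $G \in (\cU \cap \cV)\cW$ iff $\cW(G) \in \cU \cap \cV$; that $G \in \cU\cW$ iff $\cW(G) \in \cU$; and that $G \in \cV\cW$ iff $\cW(G) \in \cV$. Hence $G$ lies in $\cU\cW \cap \cV\cW$ iff $\cW(G)$ lies in both $\cU$ and $\cV$, that is, iff $\cW(G) \in \cU \cap \cV$. Both sides of the claimed equality therefore describe exactly the class $\set{G}{\cW(G) \in \cU \cap \cV}$, so they coincide.

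There is no serious obstacle here; the entire content is the observation that the normal subgroup playing the role of the $\cW$-factor is canonical — namely $\cW(G)$ — and in particular independent of the left-hand variety. This is precisely what fails for left-distributivity: in $\cU\cV \cap \cU\cW$ the relevant \emph{quotient} is not pinned down to a single canonical object, which is why Proposition~\ref{prop:upper_and_lower} only produces inequalities in that direction. The one point I would take care to argue explicitly is the equivalence $G \in \cA\cW \iff \cW(G) \in \cA$, since this is where closure under subgroups and the minimality of $\cW(G)$ are genuinely used; everything downstream of it is purely formal.
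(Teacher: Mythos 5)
Your proof is correct, but it takes a genuinely different route from the paper's. The paper argues directly with the witnessing normal subgroups: given $N_\cU \in \cU$ and $N_\cV \in \cV$ with $G/N_\cU, G/N_\cV \in \cW$, it considers the (generally non-surjective) map $G \to G/N_\cU \times G/N_\cV$, whose kernel $N_\cU \cap N_\cV$ lies in $\cU \cap \cV$ by closure under subgroups, and whose image lies in $\cW$ by closure under Cartesian products and subgroups. You instead route everything through the canonicity of the verbal subgroup, via the equivalence $G \in \cA\cW \iff \cW(G) \in \cA$; this is legitimate, and indeed that equivalence is exactly what the paper establishes in its verification that product varieties are varieties (and reuses in Proposition~\ref{prop:varietal_characterization}), so you are leveraging infrastructure the paper already has rather than introducing anything new. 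Once it is in place, all three memberships collapse to the single condition $\cW(G) \in \cU \cap \cV$, and the identity is formal. The trade-off: the paper's argument needs closure under finite Cartesian products but no verbal-subgroup machinery, while yours needs no products at all, only closure under subgroups plus minimality of $\cW(G)$. Your version also isolates the conceptual reason the identity holds on the right but fails on the left: in $\cU\cW \cap \cV\cW$ the two witnessing kernels can both be replaced by the single canonical subgroup $\cW(G)$, whereas in $\cU\cV \cap \cU\cW$ the relevant quotients are not pinned down to any canonical object, which is precisely why Proposition~\ref{prop:upper_and_lower} yields only upper and lower bounds in that direction.
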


\begin{proof}
    The inclusion ``$\leq$'' is immediate.

    Suppose $G \in \cU \cW \cap \cV \cW$, with $N_\cU, N_\cV \normal G$ such that $N_\cU$ is in $\cU$, $N_\cV$ is in $\cV$, and both quotients $G/N_\cU, G/N_\cV$ are in $\cW$.
    We have a (generally non-surjective map) \[G \to G/N_\cU \times G/N_\cV\] with kernel $N_\cU \cap N_\cV$.
    That is, the kernel is in $\cU \cap \cV$, and the quotient is in $\cW$, since a variety is closed under Cartesian product and subgroups.
\end{proof}

Varieties are determined by their finitely generated groups:
\begin{prop}
    \label{prop:fg_subclass}
    Let $\cU$ and $\cV$ be varieties.
    Let $\cU_f$ denote the subclass of groups $G \in \cU$ such that $G$ is finitely generated and define $\cV_f$ similarly.
    Then $\cU = \cV$ if and only if $\cU_f = \cV_f$.
\end{prop}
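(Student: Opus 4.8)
The plan is to exploit the fact that whether a group satisfies a law is a finitary condition, and is therefore detected by finitely generated subgroups. The ``only if'' direction is trivial, since $\cU = \cV$ gives $\cU_f = \cV_f$ at once; so I would focus on the ``if'' direction, assuming $\cU_f = \cV_f$ and deducing $\cU = \cV$.

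The key step is an auxiliary claim: for any variety $\cW$ and any group $G$, one has $G \in \cW$ if and only if every finitely generated subgroup of $G$ lies in $\cW$. I would prove this first. The forward implication is immediate, as varieties are closed under subgroups. For the converse, suppose every finitely generated subgroup of $G$ lies in $\cW$, and let $w = w(x_1, \dots, x_k)$ be any defining law of $\cW$. Given arbitrary $g_1, \dots, g_k \in G$, the value $w(g_1, \dots, g_k)$ is computed entirely within the subgroup $H = \gen{g_1, \dots, g_k}$, which is finitely generated and hence in $\cW$; therefore $w(g_1, \dots, g_k) = 1$. Since this holds for every defining law of $\cW$ and every tuple of arguments, $G$ satisfies all the laws of $\cW$, so $G \in \cW$.

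Granting the claim, the proposition follows quickly. For any group $G$, a finitely generated subgroup $H \leq G$ lies in $\cU$ precisely when $H \in \cU_f$ (as $H$ is finitely generated), and similarly for $\cV$. Hence $G \in \cU$ if and only if every finitely generated subgroup of $G$ lies in $\cU_f$, if and only if every finitely generated subgroup of $G$ lies in $\cV_f$ (using the hypothesis $\cU_f = \cV_f$), if and only if $G \in \cV$. As $G$ was arbitrary, $\cU = \cV$.

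There is no serious obstacle here: the single substantive ingredient is the finitary nature of laws, namely that a word in $k$ variables is evaluated using only the finitely generated subgroup those variables generate. The only point requiring care is the bookkeeping between a variety and its finitely generated part, since the hypothesis compares only the finitely generated members $\cU_f$ and $\cV_f$; every membership question must therefore be routed through finitely generated subgroups before the hypothesis can be applied.
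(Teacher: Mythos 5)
Your proof is correct and rests on exactly the same idea as the paper's: a law is a word in finitely many variables, so any violation of it is witnessed inside the finitely generated subgroup generated by the arguments. The paper packages this contrapositively (a law of $\cU$ violated in some $G \in \cV$ is already violated in a finitely generated subgroup $G_0 \in \cV_f$), while you package it as a direct ``local'' lemma that membership in a variety is detected by finitely generated subgroups; the difference is purely organizational.
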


\begin{proof}
Clearly $\cU = \cV$ implies $\cU_f = \cV_f$.
Suppose $\cV$ is \emph{not} contained in $\cU$, so there is a law~$w \in F_\infty$ which is satisfied by all groups in $\cU$, but there is some $G \in \cV$ and $\varphi \colon F_\infty \to G$ with $\varphi(w) \neq 1$.
The law $w$ is a word on finitely many letters $x_1, \dots, x_n$ in the basis for $F_\infty$, and we can assume $\varphi(x_i) = 1$ for all $i > n$.
The subgroup $G_0 \leq G$ generated by $\varphi(x_1), \ldots, \varphi(x_n)$ is an element of $\cV_f$.
We can consider $\varphi$ as a map $F_\infty \to G_0$ and so $G_0$ does not satisfy the law $w$.
Thus $\cV_f$ is not contained in $\cU$, so in particular $\cV_f$ is not contained in $\cU_f$.
\end{proof}

Recall a well-known fact about torsion groups, which we will apply several times.
\begin{prop}[{\cite[5.4.11]{robinson}}]
    \label{prop:solvable_torsion}
    A finitely generated solvable torsion group is finite.
\end{prop}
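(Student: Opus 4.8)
The plan is to argue by induction on the derived length $d$ of the finitely generated solvable torsion group $G$. The base case $d \leq 1$ is the abelian case, which is easily settled directly: if $G = \gen{g_1, \dots, g_k}$ is abelian with each generator $g_i$ of finite order $n_i$, then $G$ is a homomorphic image of the finite group $\Zmod{n_1} \times \dots \times \Zmod{n_k}$, and hence finite. (One could alternatively invoke the structure theorem for finitely generated abelian groups, but this is unnecessary.)

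For the inductive step, suppose $d \geq 2$ and let $G'$ denote the derived subgroup. Then $G/G'$ is finitely generated (as a quotient of $G$), abelian, and torsion (as a quotient of a torsion group), so by the base case $G/G'$ is finite. The crux of the argument is that $G'$ is therefore a subgroup of finite index in the finitely generated group $G$, and a finite-index subgroup of a finitely generated group is itself finitely generated (by the Reidemeister--Schreier process, or equivalently the Nielsen--Schreier index formula applied to a surjection from a free group of finite rank onto $G$). Granting this, $G'$ is finitely generated; it is torsion as a subgroup of $G$, and solvable of derived length at most $d-1$. By the inductive hypothesis $G'$ is finite, and so $G$, being an extension of the finite group $G'$ by the finite group $G/G'$, is finite.

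The only non-formal input is the fact that finite-index subgroups of finitely generated groups are finitely generated, and this is precisely where I expect the main subtlety to lie: it is the one place the hypothesis that $G$ is finitely generated is genuinely exploited, since in general the derived subgroup of a finitely generated group need not be finitely generated. Everything else is routine bookkeeping with the derived series, together with the observation that the relevant properties---being torsion, and being finite---are inherited by subgroups, quotients, and extensions.
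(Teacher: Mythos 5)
Your proof is correct. The paper does not prove this proposition at all --- it simply cites it as a standard result (Robinson, \emph{A Course in the Theory of Groups}, 5.4.11) --- and your argument is essentially the textbook proof from that reference: induction on derived length, with the abelian base case and Schreier's lemma (finite-index subgroups of finitely generated groups are finitely generated) as the only substantive inputs. The one cosmetic difference is that Robinson runs the induction by applying it to the quotient $G/G^{(d-1)}$ and the base case to the last term of the derived series, whereas you apply the base case to $G/G'$ and the induction to $G'$; the two are interchangeable and rely on exactly the same ingredients.
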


\subsection{Coprimality}
\label{subsection:coprimality}

The notion of detectability of a law in power subgroups does not make sense in general if one allows $m$ and $n$ for which $\operatorname{gcd}(m,n) = d > 1$; this could only say something about $\ps{G}{d} \leq \ps{G}{m} \cap \ps{G}{n}$ and not the whole group $G$.
For example, in a group $G$ of exponent $d$ the power subgroups are trivial and satisfy all laws $w \in F_\infty$, whereas $G$ does not if it is non-trivial.
A more extreme example is provided by the free Burnside groups of exponent $d$ for large odd $d$, which are infinite by the celebrated work of Novikov and Adian, and thus are not even solvable (by Proposition~\ref{prop:solvable_torsion}).

A precise formulation of this idea is the following:
\begin{prop}
    \label{prop:non_coprime_version}
    For every variety $\cV$ and for all integers $m$ and $n$, we have
    \begin{equation}
        \tag{$\star$}
        \label{eqn:non_coprime_version}
        \cV \cB_m \cap \cV \cB_n \geq \cV \cB_{\operatorname{gcd}(m,n)}.
    \end{equation}
    Suppose further that $\cV$ is detectable in power subgroups, that is, we have equality in \eqref{eqn:non_coprime_version} for the case of \emph{coprime} $m$ and $n$.
    Then we have equality in \eqref{eqn:non_coprime_version} for \emph{all} $m$ and $n$.
\end{prop}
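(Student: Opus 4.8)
I would split $(\star)$ into its two inclusions, the easy one first. The inequality $\cV\cB_m\cap\cV\cB_n\geq\cV\cB_{\gcd(m,n)}$ holds for every $\cV$ with no hypothesis: writing $d=\gcd(m,n)$, we have $\cB_d=\cB_m\cap\cB_n$ (as in the proof of Corollary~\ref{cor:metaU}), so the first inclusion of Proposition~\ref{prop:upper_and_lower} gives $\cV\cB_d=\cV(\cB_m\cap\cB_n)\leq\cV\cB_m\cap\cV\cB_n$. Thus the whole content of the proposition is the reverse inclusion under the detectability hypothesis, and this is where I would concentrate.

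For the reverse inclusion I would argue at the level of groups. Fix $G\in\cV\cB_m\cap\cV\cB_n$ and write $m=dm'$, $n=dn'$ with $\gcd(m',n')=1$; by Proposition~\ref{prop:varietal_characterization} it suffices to show $\ps{G}{d}\in\cV$. I would set $H=\ps{G}{d}$, $A=\ps{G}{m}$ and $B=\ps{G}{n}$, so that $A,B\in\cV$ are normal in $H$ and, by a Bézout identity $sm+tn=d$, $H=AB$ (this is exactly the $\cV\nprod\cV$ bound furnished by the upper inclusion of Proposition~\ref{prop:upper_and_lower}). Since $H/A\cong B/(A\cap B)$ is a quotient of $B$, and symmetrically for $H/B$, both $H/A$ and $H/B$ lie in $\cV$; the diagonal embedding $H/(A\cap B)\hookrightarrow H/A\times H/B$ then shows $H/(A\cap B)\in\cV$, while $A\cap B\leq A$ lies in $\cV$. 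So $H$ is a $\cV$-by-$\cV$ extension, and the remaining task is to upgrade this to $H\in\cV$ using detectability.

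The plan for that upgrade is to apply detectability directly to $H$. I would put $e_A=\exp(H/A)$ and $e_B=\exp(H/B)$; these are finite, since $H/A$ is a subgroup of $G/\ps{G}{m}$, which has exponent dividing $m$, so $e_A\mid m$ and likewise $e_B\mid n$. By construction $\ps{H}{e_A}\leq A$ and $\ps{H}{e_B}\leq B$, whence $\ps{H}{e_A},\ps{H}{e_B}\in\cV$; provided $e_A$ and $e_B$ are coprime, detectability in the form $\cV\cB_{e_A}\cap\cV\cB_{e_B}=\cV$ delivers $H\in\cV$, as required. The main obstacle will therefore be exactly this coprimality. From $e_A\mid m$ and $e_B\mid n$ one already gets $\gcd(e_A,e_B)\mid d$, so the only danger is a prime $p\mid d$ dividing both exponents. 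The key observation to remove it is that $H/A$ is generated by the images of $d$-th powers, which have order dividing $m'$, and dually $H/B$ is generated by elements of order dividing $n'$. When $\cV$ is locally nilpotent — which covers every application in this paper — a group in $\cV$ generated by elements of order dividing $m'$ is a $\pi(m')$-group, its torsion splitting into $p$-components, so $e_A$ is supported on the primes of $m'$ and $e_B$ on the primes of $n'$; as $\gcd(m',n')=1$ these are coprime and the argument closes. For a wholly arbitrary detectable $\cV$ one would need to rule out a common prime divisor of $e_A$ and $e_B$ among the primes of $d$ by a separate argument, and I expect this prime-restriction step to be the real crux; everything else is bookkeeping with the extension structure of $H$.
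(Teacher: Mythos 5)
Your argument is correct as far as it goes, but it does not prove the proposition as stated, and the place where it stops is exactly where the statement still has content. The easy inclusion, the identity $\ps{G}{d}=AB$ for $A=\ps{G}{m}$, $B=\ps{G}{n}$ (B\'ezout plus $A,B\leq\ps{G}{d}$), the bounds $e_A=\exp(H/A)\mid m$ and $e_B=\exp(H/B)\mid n$, the containments $\ps{H}{e_A}\leq A$ and $\ps{H}{e_B}\leq B$, and the observation that detectability applied to $H$ with the pair $(e_A,e_B)$ finishes the proof provided $\gcd(e_A,e_B)=1$ --- all of this is right. So is your coprimality argument when $\cV$ is locally nilpotent: there the torsion subgroup of a group in $\cV$ splits as the direct product of its primary components, so $H/A$, being generated by elements of order dividing $m'$, is a $\pi(m')$-group, and $H/B$ is a $\pi(n')$-group. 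But the proposition quantifies over \emph{every} detectable variety, and for general $\cV$ this key step fails: a group generated by elements of order dividing $m'$ need not be a $\pi(m')$-group (think of $S_3$, which is generated by involutions), so nothing prevents $e_A$ and $e_B$ from sharing a prime dividing $d$. Concretely, your proof does not cover $\cV=\cB_r$ --- detectable by the Example in the introduction, and not locally nilpotent (e.g.\ $S_3\in\cB_6$) --- whenever $r$ shares a prime with $d$, say $\cV=\cB_6$ with $(m,n)=(4,6)$. Your aside that local nilpotency ``covers every application in this paper'' is also not accurate: the Remark following the proposition invokes it for an arbitrary detectable variety. So this is a genuine gap, which you flag honestly but do not close.

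For comparison, the paper's proof takes a completely different, purely varietal route, with no element or exponent arithmetic at all: it asserts $\cB_m\leq\cB_{m'}\cB_d$, deduces $\cV\cB_m\cap\cV\cB_n\leq(\cV\cB_{m'})\cB_d\cap(\cV\cB_{n'})\cB_d=(\cV\cB_{m'}\cap\cV\cB_{n'})\cB_d$ using associativity and right-distributivity (Proposition~\ref{prop:on_your_right}), and then applies detectability to the coprime pair $(m',n')$ to identify the last class as $\cV\cB_d$. You should be aware, however, that the pivotal inclusion $\cB_m\leq\cB_{m'}\cB_d$ is false under the paper's conventions: it says that $\ps{G}{d}$ has exponent dividing $m'$ whenever $G$ has exponent dividing $m$, yet $S_3$ has exponent $6$ while $\ps{S_3}{3}=S_3$ (the transpositions are cubes and generate), so $S_3\in\cB_6\setminus\cB_2\cB_3$; the valid general inclusion is the reverse one, $\cB_{m'}\cB_d\leq\cB_m$. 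So the difficulty you isolate --- controlling the primes of $d$ for an arbitrary detectable $\cV$ --- is not actually dispatched by the paper's argument either. Your proof is the one that genuinely settles the locally nilpotent case; for a general detectable variety the reduction to the coprime case should be regarded as requiring a different argument (or a counterexample).
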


\begin{proof}
    The inclusion \eqref{eqn:non_coprime_version} is immediate, as both $\cB_m$ and $\cB_n$ contain $\cB_{\operatorname{gcd}(m,n)}$.

    Suppose now that $\cV$ is detectable in power subgroups, and let $d = \operatorname{gcd}(m,n)$, $m' = m/d$, $n' = n/d$, so that $m'$ and $n'$ are coprime.
    We have $\cB_m \leq \cB_{m'} \cB_d$ (in general this inclusion may be strict), and similarly for $\cB_n$, and thus \[
        \cV \cB_m \cap \cV \cB_n \leq \cV \cB_{m'} \cB_d \cap \cV \cB_{n'} \cB_d = (\cV \cB_{m'} \cap \cV \cB_{n'}) \cB_d
    \] via right-distributivity of the product over intersection (Proposition~\ref{prop:on_your_right}), and implicitly using associativity of the variety product. By assumption of detectability, this last term is just $\cV \cB_d$.
\end{proof}

The reader is referred to \cite{boatman_olshanskii} for more on the fascinating topic of products of Burnside varieties.

\begin{rmk}[More than two powers]
    The notion of detectability is unchanged if we replace the two powers $m$ and $n$ with powers $m_1, m_2, \dots, m_k$ that are  mutually (not necessarily pairwise) coprime, that is, $\operatorname{gcd}(m_1, m_2, \dots, m_k) = 1$.
    This follows by an easy induction, which can be expressed conveniently using the characterization of Proposition~\ref{prop:non_coprime_version}.
\end{rmk}

\section{Locally nilpotent varieties are detectable}
\label{section:nilpotent}

The starting point for this section is a desire to generalize the result that commutativity is detectable in power subgroups to the nilpotent case.
For instance, can power subgroups detect whether a group is nilpotent of class at most 2?
We are carried quite a way towards our goal by Fitting's Theorem.

\begin{thm}[{Fitting, \cite[5.2.8]{robinson}}]
    \label{thm:fitting}
    Let $M$ and $N$ be normal nilpotent subgroups of a group $G$. If $c$ and $d$ are the nilpotency classes of $M$ and $N$, then $L = MN$ is nilpotent of class at most $c + d$.
\end{thm}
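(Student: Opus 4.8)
The plan is to prove the equivalent statement that $\gamma_{c+d+1}(L) = 1$, where $\gamma_k$ denotes the $k$-th term of the lower central series, by induction on the sum $c + d$ of the two nilpotency classes. First I would record the standing facts I will use repeatedly: $L = MN$ is genuinely a subgroup (since $M, N \normal G$), both $M$ and $N$ are normal in $L$, and every term $\gamma_k(M)$ is characteristic in $M$ and hence normal in $L$ (likewise for $N$). The base cases $c = 0$ or $d = 0$ are immediate, as then one of $M, N$ is trivial and $L$ equals the other.

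For the inductive step I assume $c, d \geq 1$. The subgroup $\gamma_c(M)$ is central in $M$, because $[\gamma_c(M), M] = \gamma_{c+1}(M) = 1$, and it is normal in $L$. Passing to $\bar L = L / \gamma_c(M)$, the images $\bar M$ and $\bar N$ are normal nilpotent subgroups with $\bar L = \bar M \bar N$, where now $\bar M$ has class at most $c - 1$ and $\bar N$ class at most $d$. Since $(c-1) + d < c + d$, the inductive hypothesis gives $\gamma_{c+d}(\bar L) = 1$. Using the standard fact that $\gamma_k$ descends to quotients as $\gamma_k(L/K) = \gamma_k(L)\, K / K$ (compare Lemma~\ref{lem:basic_quotient}, since $\gamma_k$ is a verbal subgroup), this says precisely that $\gamma_{c+d}(L) \leq \gamma_c(M)$. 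By the symmetry of the hypotheses in $M$ and $N$, the same argument yields $\gamma_{c+d}(L) \leq \gamma_d(N)$.

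Combining the two inclusions gives $\gamma_{c+d}(L) \leq \gamma_c(M) \cap \gamma_d(N) \leq Z(M) \cap Z(N)$, where $\gamma_c(M) \leq Z(M)$ and $\gamma_d(N) \leq Z(N)$ as noted above. An element that centralizes both $M$ and $N$ centralizes their product $L = MN$, so $\gamma_{c+d}(L) \leq Z(L)$, and therefore $\gamma_{c+d+1}(L) = [\gamma_{c+d}(L), L] = 1$. This is exactly the asserted bound on the class of $L$, completing the induction.

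Organized this way the argument is short and I do not expect a serious obstacle; the one idea that makes it go through is performing the class-reduction on $M$ and on $N$ \emph{separately} and then intersecting, which forces $\gamma_{c+d}(L)$ into $Z(M) \cap Z(N)$ rather than merely into one of $\gamma_c(M)$ or $\gamma_d(N)$. The natural alternative—filtering $L$ by the normal subgroups $[\gamma_a(M), \gamma_b(N)]$ with $a+b$ fixed and checking that commutating with $L$ shifts the filtration down—also works, but there the real bookkeeping obstacle surfaces: reorganizing commutators with interspersed entries from $M$ and $N$ requires the Hall--Witt (three subgroups) lemma, precisely the complication that the inductive approach sidesteps.
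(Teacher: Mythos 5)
Your proof is correct, and there is in fact nothing in the paper to compare it against: Fitting's Theorem is stated there as a quoted result, cited to Robinson [5.2.8], with no proof supplied. Your argument is a clean, self-contained alternative to the standard textbook proof. The usual route (Robinson's) expands $\gamma_{c+d+1}(MN)$ as the product of the mixed commutator subgroups $[Y_1,\dots,Y_{c+d+1}]$ with each $Y_j \in \{M,N\}$, using the identity $[AB,C]=[A,C][B,C]$ for normal subgroups, and then observes that each such term lies in $\gamma_{c+1}(M)=1$ (if at least $c+1$ entries equal $M$) or in $\gamma_{d+1}(N)=1$ (otherwise). Your central-quotient induction replaces this bookkeeping with two standard facts --- that $\gamma_k$ is verbal and hence compatible with quotients, and that $\gamma_c(M)\le Z(M)$ --- and the key steps are all sound: the induction is well-founded (the inductive step fires only when $c,d\ge 1$, and the cases $c=0$ or $d=0$ are handled directly), and the intersection step $\gamma_{c+d}(L)\le \gamma_c(M)\cap\gamma_d(N)\le Z(M)\cap Z(N)\le Z(L)$ is exactly the right way to convert the two one-sided reductions into centrality. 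One small correction to your closing remark: the counting proof does not genuinely require the Hall--Witt three-subgroups lemma; normality of the subgroups $\gamma_k(M)$ and $\gamma_k(N)$ in $G$, together with the product expansion above, suffices, since commutating a partial commutator lying in $\gamma_k(M)$ with $N$ keeps it in $\gamma_k(M)$, while commutating with $M$ pushes it into $\gamma_{k+1}(M)$. What the counting proof buys in exchange for its bookkeeping is the sharper structural fact $\gamma_i(MN)=\prod[Y_1,\dots,Y_i]$, which is useful in its own right; what yours buys is brevity and the avoidance of any commutator calculus beyond the definitions.
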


However, this will only tell us, for instance, that if the power subgroups are nilpotent of class at most 2, then our group of interest is nilpotent of class at most 4.
We first lay some foundations towards proving the general Theorem~\ref{thm:any_nilpotent_variety}, then see in Theorem~\ref{thm:nilpotent_laws} how we can reduce the bound of $2c$ to $c$, as in Corollary~\ref{cor:nilpotency_is_detectable}.
By proving the general theorem, we will also be able to conclude that certain Engel laws are detectable.

\begin{prop}
    \label{prop:still_got_it}
    Let $m$ and $n$ be coprime.
    Let $\cC$ denote the class of nilpotent, locally nilpotent, solvable, or locally solvable groups.
    Then $\cC \cB_m \cap \cC \cB_n = \cC$.
\end{prop}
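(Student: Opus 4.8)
The plan is to reduce everything to the single structural observation that $G = \ps{G}{m}\ps{G}{n}$ for an arbitrary group $G$ and coprime $m,n$. Since each of the four classes $\cC$ is closed under subgroups and quotients, the argument of Proposition~\ref{prop:varietal_characterization} shows that $G \in \cC\cB_m \cap \cC\cB_n$ if and only if both $M := \ps{G}{m}$ and $N := \ps{G}{n}$ lie in $\cC$; note $M, N$ are normal in $G$. The inclusion $\cC \subseteq \cC\cB_m \cap \cC\cB_n$ is immediate from $\mathbb{1} \leq \cB_m$, so the content is the reverse. To see that $G = MN$, observe that in $G/MN$ the $m$-th and $n$-th power maps are trivial, so this quotient has exponent dividing $\gcd(m,n) = 1$; hence $G = MN$ is a product of two normal subgroups, each lying in $\cC$.

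For the non-local cases this finishes quickly. When $\cC$ is the class of solvable groups, $G/M \cong N/(M\cap N)$ is solvable and $M$ is solvable, so $G$ is solvable; when $\cC$ is the class of nilpotent groups, Fitting's Theorem (Theorem~\ref{thm:fitting}) applies directly to the product $MN$ of two normal nilpotent subgroups.

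The local cases are where the real work lies, since neither ``locally nilpotent'' nor ``locally solvable'' is closed under extensions, so one cannot simply combine $M$ and $N$. Here I would fix an arbitrary finitely generated subgroup $H \leq G$ and aim to show that $H$ itself is nilpotent (respectively solvable). The key leverage is that $G/M \cong N/(M \cap N)$ is simultaneously locally nilpotent/solvable (as a quotient of $N$) and of exponent dividing $m$ (as $G/\ps{G}{m} \in \cB_m$). Consequently $HM/M \cong H/(H\cap M)$ is a finitely generated torsion group lying in $\cC$, hence solvable and torsion, hence finite by Proposition~\ref{prop:solvable_torsion}. Therefore $H \cap M$ has finite index in the finitely generated group $H$, so $H \cap M$ is itself finitely generated, and being a subgroup of $M \in \cC$ it is genuinely nilpotent (respectively solvable); the same argument applies to $H \cap N$.

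Finally I would recombine. Since $\ps{H}{m} \leq \ps{G}{m} = M$ and $\ps{H}{n} \leq N$, the identity $H = \ps{H}{m}\ps{H}{n}$ gives $H = (H\cap M)(H\cap N)$, a product of two normal, finitely generated, nilpotent (respectively solvable) subgroups. Fitting's Theorem (respectively solvability of a solvable-by-solvable group) then shows $H$ is nilpotent (respectively solvable), and as $H$ was an arbitrary finitely generated subgroup, $G$ is locally nilpotent (respectively locally solvable). I expect the main obstacle to be precisely this local step: the naive ``a product of two normal locally-$\cC$ subgroups is locally-$\cC$'' statement does not follow from extension-closure, and the device that rescues it is the observation that the relevant quotient is torsion, which lets Proposition~\ref{prop:solvable_torsion} upgrade ``locally in $\cC$'' to honest finiteness and thereby restore the finite generation of $H \cap M$.
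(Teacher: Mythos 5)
Your proof is correct and follows essentially the same route as the paper: the decomposition $G = \ps{G}{m}\ps{G}{n}$, Fitting's Theorem and extension-closure for the non-local cases, and for the local cases the key device of applying Proposition~\ref{prop:solvable_torsion} to the finitely generated torsion quotient to regain finite generation before recombining. The only (cosmetic) difference is that you treat the locally nilpotent case uniformly with the locally solvable one, whereas the paper first cites the Hirsch--Plotkin Theorem for it --- but the paper itself remarks that your unified argument works and renders Hirsch--Plotkin unnecessary.
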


\begin{proof}
    In each of the first three cases, this is an application of Corollary~\ref{cor:metaU} together with the corresponding standard result that the appropriate $\cC \nprod \cC$ is equal to $\cC$: Fitting's Theorem for the nilpotent case, the Hirsch--Plotkin Theorem \cite[12.1.2]{robinson} for the locally nilpotent case, and the solvable case is elementary.
    (This in fact shows the result still holds after replacing $\cB_m$ and $\cB_n$ with two arbitrary varieties with trivial intersection.)
    However, a group which is the product of two normal locally solvable subgroups need not be locally solvable, as shown by P.~Hall \cite[Theorem 8.19.1 (i)]{robinson_soluble_2}, so for the remaining case we exploit the power subgroup structure.
    This argument also allows us to conclude the locally nilpotent case from Fitting's theorem, without the need to invoke Hirsch--Plotkin.

    Assume now that $G \in \cC \cB_m \cap \cC \cB_n$ is finitely generated, so that its quotient $G / \ps{G}{m}$ is finitely generated and of exponent $m$.
    By the second isomorphism theorem, \[
        G / \ps{G}{m} \cong \ps{G}{n} / (\ps{G}{m} \cap \ps{G}{n})
    \] and so since $\ps{G}{n}$ is locally solvable, its finitely generated quotient $G / \ps{G}{m}$ is solvable.
    Now $G / \ps{G}{m}$ is a finitely generated solvable torsion group, and thus finite (Proposition~\ref{prop:solvable_torsion}).
    Hence the subgroup $\ps{G}{m} \normal G$ is of finite index, so it is finitely generated, and since groups in $\cU$ are locally solvable, $\ps{G}{m}$ is in fact solvable.
    Similarly, $\ps{G}{n}$ is solvable.
    Thus $G = \ps{G}{m} \ps{G}{n}$ is solvable.
\end{proof}

\begin{thm}
    \label{thm:nilpotent_laws}
    Let $G$ be a finitely generated nilpotent group and let $m$ and $n$ be coprime.
    If $\ps{G}{m}$ and $\ps{G}{n}$ both satisfy a law $w$, then $G$ satisfies $w$.
\end{thm}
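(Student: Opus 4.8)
The plan is to reduce immediately to the case of finite nilpotent groups, where the coprimality hypothesis dovetails perfectly with the decomposition into Sylow subgroups; Lemma~\ref{lem:super_basic} then does essentially all the work.

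First I would observe that it suffices to treat finite $G$. Recall that a finitely generated nilpotent group is residually finite. So if $G$ fails to satisfy $w$, say $w(g_1, \dots, g_r) \neq 1$ for some $g_i \in G$, then there is a finite quotient $q \colon G \epi \bar{G}$ with $q\bigl(w(g_1, \dots, g_r)\bigr) \neq 1$. The quotient $\bar{G}$ is again nilpotent, and by Lemma~\ref{lem:basic_quotient} its power subgroups $\ps{\bar{G}}{m} = q(\ps{G}{m})$ and $\ps{\bar{G}}{n} = q(\ps{G}{n})$ are homomorphic images of $\ps{G}{m}$ and $\ps{G}{n}$; hence they inherit the law $w$. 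Thus $\bar{G}$ satisfies the hypotheses of the theorem while its conclusion is in doubt, so it is enough to prove the statement for finite nilpotent groups.

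Now a finite nilpotent group is the direct product $G = P_1 \times \dots \times P_s$ of its Sylow subgroups, with $P_i$ a $p_i$-group. A law holds in a direct product exactly when it holds in each factor, and $\ps{G}{m} = \ps{P_1}{m} \times \dots \times \ps{P_s}{m}$, so the hypothesis that $\ps{G}{m}$ satisfies $w$ says precisely that each $\ps{P_i}{m}$ satisfies $w$, and similarly for $n$. The crucial point is that, by coprimality, each prime $p_i$ fails to divide at least one of $m$ and $n$ (otherwise it would divide $\operatorname{gcd}(m,n) = 1$); say $p_i \nmid m$. Then $\operatorname{gcd}(m, \abs{P_i}) = 1$, so Lemma~\ref{lem:super_basic} gives $g \in \gen{g^m} \leq \ps{P_i}{m}$ for every $g \in P_i$, whence $\ps{P_i}{m} = P_i$. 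Therefore $P_i$ itself satisfies $w$. Running this over all $i$ shows that every Sylow factor satisfies $w$, and hence so does $G = P_1 \times \dots \times P_s$.

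The conceptual heart of the argument is this last synergy: coprimality guarantees that every prime is invisible to at least one of the two powering operations, so one of the two power subgroups recovers the corresponding Sylow factor \emph{exactly}. The main thing to get right is the reduction to finite groups, which is what licenses the Sylow decomposition. Without it one is tempted to induct on the nilpotency class and, at the central step, to separate $w$ into its commutator part and its exponent-sum part; but the clean powering identity $w(g_1^k, \dots, g_r^k) = w(g_1, \dots, g_r)^{k^c}$ that one would want for central values only holds when $w$ lies in $\gamma_c$, and it fails for laws with nontrivial abelianisation such as $x^2$. Passing to finite groups sidesteps this difficulty entirely.
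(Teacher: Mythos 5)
Your proof is correct and follows essentially the same route as the paper: residual finiteness of finitely generated nilpotent groups to reduce to the finite case, the Sylow decomposition of finite nilpotent groups, and Lemma~\ref{lem:super_basic} combined with coprimality to see that each Sylow factor coincides with one of its power subgroups (with Lemma~\ref{lem:basic_quotient} licensing the passage of the hypotheses to quotients). The only difference is presentational: the paper argues by contradiction, projecting a surviving nontrivial value of $w$ onto a single Sylow factor, whereas you handle all Sylow factors at once; the substance is identical.
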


In other words, the variety generated by $G$ is the intersection of the varieties generated by $\ps{G}{m}$ and $\ps{G}{n}$.
(The variety generated by a group is the intersection all varieties containing it.)

\begin{proof}
    Suppose for the sake of contradiction that there is a homomorphism $\varphi \colon F_\infty \to G$ with $\varphi(w) \neq 1$.
    Since $G$ is finitely generated and nilpotent, it is residually finite \cite[5.4.17]{robinson}, so there is a map $q \colon G \epi Q$ for some finite group $Q$ such that $q(\varphi(w)) \neq 1$.
    As $G$ is nilpotent, so is $Q$, and thus $Q$ is the direct product of its Sylow subgroups \cite[5.2.4]{robinson}.
    We compose $q$ with a projection onto a Sylow subgroup in which $q(\varphi(w))$ has non-trivial image, to get $q_p \colon G \epi Q_p$.
    Without loss of generality, $p$ does not divide $m$ so that $\ps{Q_p}{m} = Q_p$ (Lemma~\ref{lem:super_basic}).
    This gives a contradiction, as $\ps{Q_p}{m} = q_p(\ps{G}{m})$ (Lemma~\ref{lem:basic_quotient}), and $\ps{G}{m}$ satisfies the law $w$.
\end{proof}

\begin{thmLocallyNilpotentVariety}
    Let $\cV$ be a locally nilpotent variety and let $m$ and $n$ be coprime.
    Then \[
        \cV \cB_m \cap \cV \cB_n = \cV.
    \]
\end{thmLocallyNilpotentVariety}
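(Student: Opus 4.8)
The plan is to assemble three of the tools developed above: the reduction of varietal equalities to finitely generated groups (Proposition~\ref{prop:fg_subclass}), the structural product result (Proposition~\ref{prop:still_got_it}), and the law-detection result for finitely generated nilpotent groups (Theorem~\ref{thm:nilpotent_laws}). The inclusion $\cV \leq \cV\cB_m \cap \cV\cB_n$ is immediate, since the trivial group lies in both $\cB_m$ and $\cB_n$, giving $\cV = \cV\mathbb{1} \leq \cV\cB_m$ and similarly $\cV \leq \cV\cB_n$. For the reverse inclusion $\cV\cB_m \cap \cV\cB_n \leq \cV$, by Proposition~\ref{prop:fg_subclass} it suffices to verify it on finitely generated groups, so I would fix a finitely generated $G \in \cV\cB_m \cap \cV\cB_n$ and show $G \in \cV$.

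The first step is to promote $G$ to a nilpotent group. Because $\cV$ is locally nilpotent, it is contained in the class $\cC$ of locally nilpotent groups, and since the product of classes is monotone in its left factor we get $\cV\cB_m \cap \cV\cB_n \leq \cC\cB_m \cap \cC\cB_n$. By the locally nilpotent case of Proposition~\ref{prop:still_got_it} the right-hand side is just $\cC$, so $G$ is locally nilpotent; being finitely generated, $G$ is therefore nilpotent.

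With nilpotency established, I would descend the defining laws of $\cV$ from the power subgroups to $G$. By Proposition~\ref{prop:varietal_characterization} the hypotheses $G \in \cV\cB_m$ and $G \in \cV\cB_n$ translate to $\ps{G}{m} \in \cV$ and $\ps{G}{n} \in \cV$, so both power subgroups satisfy every defining law $w$ of $\cV$. Now Theorem~\ref{thm:nilpotent_laws} applies verbatim, $G$ being finitely generated and nilpotent: for each such $w$, the fact that $\ps{G}{m}$ and $\ps{G}{n}$ both satisfy $w$ forces $G$ to satisfy $w$. Hence $G$ satisfies all defining laws of $\cV$, that is, $G \in \cV$, which finishes the reverse inclusion.

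The only genuine obstacle is the very first step, since Theorem~\ref{thm:nilpotent_laws} takes nilpotency as a hypothesis and cannot be invoked until we know $G$ is nilpotent; everything else is the routine bookkeeping of reducing to finitely generated groups and passing between membership in $\cV\cB_m$ and the condition $\ps{G}{m} \in \cV$. Proposition~\ref{prop:still_got_it} is precisely engineered to remove this obstacle, so the substance of the argument lies in recognizing that its locally nilpotent case supplies exactly the nilpotency needed to feed into Theorem~\ref{thm:nilpotent_laws}.
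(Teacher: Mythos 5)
Your proposal is correct and follows exactly the paper's own proof: the same three ingredients (Proposition~\ref{prop:fg_subclass} to reduce to finitely generated groups, the locally nilpotent case of Proposition~\ref{prop:still_got_it} to get nilpotency, and Theorem~\ref{thm:nilpotent_laws} to descend the laws of $\cV$ from $\ps{G}{m}$ and $\ps{G}{n}$ to $G$) assembled in the same order. The only difference is that you spell out routine steps the paper leaves implicit, such as monotonicity of the product in its left factor and the translation via Proposition~\ref{prop:varietal_characterization}.
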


\begin{proof}
    By Proposition~\ref{prop:fg_subclass}, it suffices to consider finitely generated $G \in \cV \cB_m \cap \cV \cB_n$.
    Since $\cV$ is locally nilpotent, Proposition~\ref{prop:still_got_it} guarantees that $G$ is locally nilpotent.
    As $G$ is in fact finitely generated, we can now apply Theorem~\ref{thm:nilpotent_laws} to conclude that $G$ satisfies every law which holds in both $\ps{G}{m}$ and $\ps{G}{n}$.
    Since $\ps{G}{m}$ and $\ps{G}{n}$ are in the variety $\cV$, we conclude that $G \in \cV$.
\end{proof}

The nilpotent groups of class at most $c$ form the variety $\cN_c$, so the following corollary is immediate.

\begin{corNilpotencyClass}
    Let $m$ and $n$ be coprime and let $c \geq 1$.
    Then a group $G$ is nilpotent of class at most $c$ if and only if $\ps{G}{m}$ and $\ps{G}{n}$ are both nilpotent of class at most $c$.
    \qed
\end{corNilpotencyClass}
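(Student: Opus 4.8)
The plan is to obtain this corollary directly from Theorem~\ref{thm:any_nilpotent_variety} by specializing to the variety $\cV = \cN_c$. The first thing I would check is that $\cN_c$ satisfies the hypothesis of that theorem, namely that it is locally nilpotent. This is immediate: every group in $\cN_c$ is by definition nilpotent of class at most $c$, so in particular all of its finitely generated subgroups are nilpotent, which is exactly the defining property of a locally nilpotent variety. This verification is the only hypothesis of Theorem~\ref{thm:any_nilpotent_variety} that requires attention.

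Next I would translate the statement about power subgroups into the language of product varieties. By Proposition~\ref{prop:varietal_characterization}, the power subgroup $\ps{G}{m}$ lies in $\cN_c$---equivalently, $\ps{G}{m}$ is nilpotent of class at most $c$---if and only if $G \in \cN_c \cB_m$, and likewise with $n$ in place of $m$. Hence the assumption that both $\ps{G}{m}$ and $\ps{G}{n}$ are nilpotent of class at most $c$ is precisely the statement that $G \in \cN_c \cB_m \cap \cN_c \cB_n$. Applying Theorem~\ref{thm:any_nilpotent_variety} with $\cV = \cN_c$ and the given coprime $m$ and $n$, this intersection equals $\cN_c$, so $G \in \cN_c$; that is, $G$ is nilpotent of class at most $c$, giving the ``if'' direction. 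The ``only if'' direction needs no machinery: if $G$ is nilpotent of class at most $c$, then $\ps{G}{m}$ and $\ps{G}{n}$ are subgroups of $G$, and since a variety is closed under taking subgroups they too lie in $\cN_c$.

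I do not expect any genuine obstacle in this deduction, since all of the content is carried by Theorem~\ref{thm:any_nilpotent_variety} together with its supporting results (Proposition~\ref{prop:still_got_it} and Theorem~\ref{thm:nilpotent_laws}), where the real work of reducing Fitting's bound of $2c$ down to $c$ takes place. The only point worth recording explicitly is the identification of $\cN_c$ as a locally nilpotent variety, which holds trivially because bounded nilpotency class is itself a nilpotency condition; this is exactly why the corollary is labelled as immediate.
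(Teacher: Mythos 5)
Your proposal is correct and follows exactly the paper's intended route: the paper also obtains the corollary immediately from Theorem~\ref{thm:any_nilpotent_variety} with $\cV = \cN_c$, using Proposition~\ref{prop:varietal_characterization} to identify the hypothesis on power subgroups with membership in $\cN_c \cB_m \cap \cN_c \cB_n$. Your explicit verification that $\cN_c$ is locally nilpotent and your note on the trivial ``only if'' direction simply spell out what the paper leaves implicit.
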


This means that the precise nilpotency class of $G$ is the maximum of the precise nilpotency classes of $\ps{G}{m}$ and $\ps{G}{n}$.

\begin{corEngel}
    Let $m$ and $n$ be coprime and let $k \leq 4$.
    A group $G$ is $k$-Engel if and only if $\ps{G}{m}$ and $\ps{G}{n}$ are both $k$-Engel.
\end{corEngel}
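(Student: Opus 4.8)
The plan is to read this off directly from Theorem~\ref{thm:any_nilpotent_variety}. Let $\cV$ denote the class of $k$-Engel groups, i.e.\ the groups satisfying the single law $E_k(x,y)$; this is a variety by definition. Unwinding the statement through Proposition~\ref{prop:varietal_characterization}, I note that $\ps{G}{m}$ is $k$-Engel exactly when $G \in \cV\cB_m$ (and similarly for $n$), so the corollary is equivalent to the equality $\cV\cB_m \cap \cV\cB_n = \cV$ for all coprime $m$ and $n$. This is precisely the conclusion of Theorem~\ref{thm:any_nilpotent_variety}, whose sole hypothesis is that $\cV$ be locally nilpotent. Thus the entire task reduces to verifying that hypothesis.

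The one thing I would need to check, then, is that the variety of $k$-Engel groups is locally nilpotent for each $k \leq 4$; equivalently, that every finitely generated $k$-Engel group is nilpotent. For $k = 1$ this is trivial, as $\cV = \cA$. For $k = 2$ I would invoke the classical theorem of Levi that $2$-Engel groups are nilpotent (of class at most $3$), and for $k = 3$ Heineken's theorem that $3$-Engel groups are locally nilpotent. The case $k = 4$ is exactly the theorem of Havas and Vaughan-Lee \cite{havasvaughanlee} recalled in the introduction.

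Granting these, the deduction is immediate: Theorem~\ref{thm:any_nilpotent_variety} yields $\cV\cB_m \cap \cV\cB_n = \cV$, and translating back via Proposition~\ref{prop:varietal_characterization} shows that $\ps{G}{m}$ and $\ps{G}{n}$ are both $k$-Engel if and only if $G$ is. The main obstacle is not in this argument, which is purely formal, but in its single nontrivial input---the local nilpotency of $4$-Engel groups, which is a genuinely deep result and is the reason the corollary is presently restricted to $k \leq 4$.
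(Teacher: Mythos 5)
Your proof is correct and takes essentially the same approach as the paper: both reduce the corollary to Theorem~\ref{thm:any_nilpotent_variety} by verifying that the variety of $k$-Engel groups is locally nilpotent for each $k \leq 4$. The only cosmetic difference is that the paper avoids citing Levi and Heineken, instead observing that a $k$-Engel group is $(k+1)$-Engel, so the $k \leq 3$ cases follow from the Havas--Vaughan-Lee theorem alone.
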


\begin{proof}
    The variety of $4$-Engel groups was shown to be locally nilpotent by Havas and Vaughan-Lee \cite{havasvaughanlee}.
    This also implies the (previously known) $k \leq 3$ cases, as it is clear from the definition $E_{k+1}(x,y) = [E_k(x, y), y]$ that a $k$-Engel group is also $(k+1)$-Engel.
\end{proof}

\begin{rmk}
    Gruenberg proved that a locally solvable $k$-Engel group is locally nilpotent \cite{gruenberg}, so the generality achieved in Proposition~\ref{prop:still_got_it} would not help to establish detectability of a Engel law beyond the locally nilpotent case.
    For a survey on Engel groups, the reader is referred to \cite{traustason}.
\end{rmk}

For the sake of completeness, and motivated by its importance in geometric group theory, we show that the class of virtually nilpotent groups (groups with a nilpotent subgroup of finite index) is detectable in power subgroups.
We first prove a more general result, and then use the structure of subgroups as specifically power subgroups to argue that the precise `virtual nilpotency class' is preserved.

\begin{prop}
    \label{prop:virtually_nilpotent}
    Let $G$ be a group with normal subgroups $A$ and $B$ such that $G = AB$.
    Suppose that $A$ and $B$ are virtually nilpotent.
    Then $G$ is virtually nilpotent.
\end{prop}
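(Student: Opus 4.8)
The plan is to produce, inside each of $A$ and $B$, a nilpotent subgroup of finite index that is moreover normal in all of $G$; Fitting's Theorem (\ref{thm:fitting}) then assembles these into a nilpotent normal subgroup of $G$, and a short index computation shows that this subgroup has finite index.

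\textbf{Finding the canonical pieces.} First I would pass from $A$ to a good characteristic subgroup. Since $A$ is virtually nilpotent it contains a nilpotent subgroup of finite index, and replacing the latter by its core in $A$ we may assume there is $N \normal A$ nilpotent with $[A:N] < \infty$. I take $A_1$ to be the Fitting subgroup of $A$, that is, the subgroup generated by all nilpotent normal subgroups of $A$; it is characteristic in $A$ and contains $N$, so $[A:A_1] \leq [A:N]$ is finite. The essential point is that $A_1$ is genuinely nilpotent: because $A_1/N$ is finite, $A_1$ is generated by $N$ together with finitely many coset representatives, and each representative, being an element of the Fitting subgroup, lies in a product of finitely many nilpotent normal subgroups of $A$. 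Hence $A_1$ is itself a product of finitely many nilpotent normal subgroups of $A$, and is therefore nilpotent by iterating Fitting's Theorem. In exactly the same way I obtain $B_1 \normal B$, characteristic in $B$, nilpotent, and of finite index. Since $A \normal G$ and $A_1$ is characteristic in $A$, we have $A_1 \normal G$, and likewise $B_1 \normal G$.

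\textbf{Assembling and checking finite index.} Now $A_1$ and $B_1$ are nilpotent normal subgroups of $G$, so Fitting's Theorem shows their product $A_1 B_1 \normal G$ is nilpotent. It remains to verify that $A_1 B_1$ has finite index in $G = AB$, which is not automatic because neither $A$ nor $B$ need have finite index in $G$. Writing $\overline{\phantom{x}}$ for images in $G/A_1 B_1$, we have $G/A_1 B_1 = \overline{A}\,\overline{B}$. By the modular law, $A \cap A_1 B_1 = A_1(A \cap B_1) \geq A_1$, so $\overline{A} \cong A/(A \cap A_1 B_1)$ is a quotient of the finite group $A/A_1$ and is therefore finite; symmetrically $\overline{B}$ is finite. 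As $\overline{A}$ is normal in $G/A_1 B_1$, the product $\overline{A}\,\overline{B}$ is a subgroup of order at most $\abs{\overline{A}}\,\abs{\overline{B}}$, so $G/A_1 B_1$ is finite. Thus $A_1 B_1$ is a nilpotent normal subgroup of finite index, and $G$ is virtually nilpotent.

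\textbf{Main obstacle.} The one delicate step is the nilpotency of $A_1$ in the first paragraph. For infinitely generated $A$ one cannot simply take a power subgroup $\ps{A}{s}$, with $s = [A:N]$, as the desired characteristic nilpotent subgroup, since such a subgroup may fail to have finite index: for instance, in the generalized infinite dihedral group $(\bigoplus_{i=1}^{\infty} \Z) \rtimes \Z/2$ (with the nontrivial factor acting by inversion) the subgroup of squares has infinite index, even though the Fitting subgroup has index two. The finite-index trick — that finiteness of $A_1/N$ forces $A_1$ to be a product of only \emph{finitely many} nilpotent normal subgroups — is precisely what brings Fitting's Theorem to bear, and is the crux of the whole argument.
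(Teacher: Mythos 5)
Your proof is correct and follows essentially the same route as the paper: both construct a characteristic, nilpotent, finite-index subgroup inside each of $A$ and $B$ (the nilpotent radical), note that it is therefore normal in $G$, apply Fitting's Theorem to the product of the two radicals, and finish with the finite-index count. The only difference is in bookkeeping: the paper defines this subgroup as the unique normal nilpotent subgroup of minimal finite index (uniqueness again via Fitting's Theorem), whereas you take the Fitting subgroup and prove its nilpotency via the finite-index generation trick --- in this setting the two constructions produce the same subgroup.
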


\begin{proof}
    To invoke Fitting's Theorem, we require nilpotent subgroups that are normal in~$G$.
    Let $A_0$ be the normal subgroup of $A$ which is nilpotent and of minimal finite index.
    Such an $A_0$ exists as $A$ is virtually nilpotent, and it is unique by Fitting's Theorem, as the product of two finite index normal nilpotent subgroups of $A$ is then a normal nilpotent subgroup of smaller index.
    ($A_0$ is the `nilpotent radical' or `Hirsch--Plotkin radical' of $A$.)
    Now $A_0$ is characteristic in $A$, and thus normal in $G$.
    We define $B_0$ similarly.

    As $A_0$ and $B_0$ are both nilpotent and normal in $G$, their product $A_0 B_0$ is nilpotent.
    Since $A_0$ and $B_0$ are finite index in~$A$ and~$B$ respectively, and normal in~$G$, we conclude that $A_0 B_0$ is finite index in $AB = G$.
    That is, $G$ is virtually nilpotent.
\end{proof}

\begin{cor}
    \label{cor:virtual_nilpotency}
    Let $G$ be a finitely generated group and let $m$ and $n$ be coprime.
    If $\ps{G}{m}$ and $\ps{G}{n}$ both have finite index subgroups which are nilpotent of class at most $c$, then so does $G$.
\end{cor}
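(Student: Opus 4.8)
The plan is to deduce virtual nilpotency of $G$ from Proposition~\ref{prop:virtually_nilpotent}, and then to exploit that, for a finitely generated virtually nilpotent group, the power subgroups have \emph{finite index}; this lets a finite-index nilpotent subgroup of $\ps{G}{m}$ of the \emph{precise} class $c$ serve as a witness for $G$ itself, bypassing the class-doubling inherent in Fitting's Theorem.

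First I would record that $G = \ps{G}{m}\ps{G}{n}$. The subgroups $\ps{G}{m}$ and $\ps{G}{n}$ are characteristic, so their product is a normal subgroup, and $G/(\ps{G}{m}\ps{G}{n})$ is simultaneously a quotient of $G/\ps{G}{m}$ and of $G/\ps{G}{n}$; hence its exponent divides both $m$ and $n$, so it divides $\gcd(m,n)=1$ and the quotient is trivial. Since each of $\ps{G}{m}$ and $\ps{G}{n}$ is in particular virtually nilpotent (it contains a finite-index nilpotent subgroup), Proposition~\ref{prop:virtually_nilpotent} applies with $A = \ps{G}{m}$ and $B = \ps{G}{n}$ and shows that $G$ is virtually nilpotent.

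Next I would upgrade this to finite index of the power subgroups. The quotient $Q = G/\ps{G}{m}$ is finitely generated (as $G$ is), virtually nilpotent (being a quotient of such), and of exponent dividing $m$. A finite-index nilpotent subgroup of $Q$ is then finitely generated, nilpotent (hence solvable) and torsion, so it is finite by Proposition~\ref{prop:solvable_torsion}; therefore $Q$ is finite and $\ps{G}{m}$ has finite index in $G$. Now let $C \le \ps{G}{m}$ be a finite-index subgroup that is nilpotent of class at most $c$, as provided by hypothesis. Because $\ps{G}{m}$ has finite index in $G$, so does $C$, and passing to its normal core $\bigcap_{g\in G} C^g$ (a finite intersection, hence of finite index) yields a finite-index normal subgroup of $G$ contained in $C$; being a subgroup of $C$, it is nilpotent of class at most $c$, which is exactly the desired witness.

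The only place where the \emph{precise} class is at stake is the passage to finite index: Proposition~\ref{prop:virtually_nilpotent} on its own (via Fitting's Theorem~\ref{thm:fitting}) would bound the virtual nilpotency class of $G$ only by $2c$, whereas the finite-index observation lets a single class-$c$ subgroup of one power subgroup do the job with no doubling. I expect this to be the crux, and it is where the ``power subgroup structure'' is genuinely used. It is worth noting that coprimality enters \emph{only} to force $G = \ps{G}{m}\ps{G}{n}$, and hence virtual nilpotency; once that is known, the class bound is supplied by just one of the two power subgroups.
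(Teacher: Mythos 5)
Your proposal is correct and follows essentially the same route as the paper: apply Proposition~\ref{prop:virtually_nilpotent} to $A = \ps{G}{m}$, $B = \ps{G}{n}$ to get virtual nilpotency, then use Proposition~\ref{prop:solvable_torsion} to show $G/\ps{G}{m}$ is finite, so that the class-$c$ finite-index subgroup of $\ps{G}{m}$ already has finite index in $G$. Your extra steps (explicitly checking $G = \ps{G}{m}\ps{G}{n}$, and passing to the normal core at the end) are correct but not needed for the statement; the paper leaves the first implicit and omits the second since normality is not asked for.
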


\begin{proof}
    By Proposition~\ref{prop:virtually_nilpotent}, $G$ is virtually nilpotent.
    Thus $G / \ps{G}{m}$ has a finite index subgroup which is nilpotent, and moreover finitely generated and of exponent $m$, and hence finite (by Proposition~\ref{prop:solvable_torsion}).
    Now $\ps{G}{m}$ is finite index in $G$, so its finite index subgroups are finite index in $G$.
\end{proof}

\section{Derived length is not detectable}
\label{section:solvable}

In this section we show by explicit example that one cannot extend the above results for the nilpotent case to the solvable case.
Of course, Proposition~\ref{prop:still_got_it} tells us that a group with solvable coprime power subgroups is itself solvable: the class of solvable groups is closed under extensions.
The point is that we do not have the precise control over derived length which we did for nilpotency class.

\begin{thmMetabelian}
    Let $\cM$ denote the variety of metabelian groups.
    Then \[
        \cM \cB_2 \cap \cM \cB_3 \neq \cM.
    \] Indeed, there exists a finite group $W$ such that $\ps{W}{2}$ and $\ps{W}{3}$ are both metabelian but $W$ is of derived length 3.
\end{thmMetabelian}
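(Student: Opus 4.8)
The plan is to build $W$ by hand as an extension of a metabelian $3$-group by $C_2$, exploiting the arithmetic of the primes $2$ and $3$ to force the two power subgroups to collapse information that $W$ itself retains. Since $\abs{W} = 1458 = 2 \cdot 3^6$, the Sylow $3$-subgroup $P$ has index $2$ and is therefore normal, and the extension splits by coprimality, so necessarily $W = P \rtimes \gen{t}$ with $t^2 = 1$ and $P$ a $3$-group of order $3^6$. The first reduction is to pin down $\ps{W}{2}$: the image of $\ps{W}{2}$ in $W/P \cong C_2$ is $\ps{(C_2)}{2} = 1$ (Lemma~\ref{lem:basic_quotient}), so $\ps{W}{2} \leq P$, while squaring is a bijection on the odd-order group $P$, giving $P \leq \ps{W}{2}$; hence $\ps{W}{2} = P$. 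Thus ``$\ps{W}{2}$ is metabelian'' is exactly ``$P$ is metabelian'', a condition I simply build into the choice of $P$.

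Next I would analyse $\ps{W}{3}$. An element of the nontrivial coset has the form $pt$ with $(pt)^2 = p\,p^t \in P$ and $(pt)^3 = (p\,p^t p)\,t$; taking $p = 1$ shows $t = t^3 \in \ps{W}{3}$. Consequently $\ps{W}{3} = Q \rtimes \gen{t}$, where \[ Q = \ps{W}{3} \cap P = \gen{\,p^3,\ p\,p^t p \ :\ p \in P\,} \] is the $t$-invariant subgroup of $P$ generated by the cubes from $P$ together with the ``coset cubes''. Working modulo $P'$, and arranging $P^{\mathrm{ab}}$ to have exponent $3$ so that $p^3 \in P'$, the coset cubes reduce to $p\,p^t p \equiv (t-1)\cdot p$, so the image of $Q$ in $P^{\mathrm{ab}}$ is $(t-1)P^{\mathrm{ab}}$, which is precisely the $(-1)$-eigenspace of $t$. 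Choosing the action so that $t$ has a nontrivial $(+1)$-eigenspace on $P^{\mathrm{ab}}$ then forces $Q \subsetneq P$, so that $\ps{W}{3} \neq W$; the real work is to arrange that this proper subgroup $Q \rtimes \gen{t}$ is metabelian.

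The derived-length bookkeeping is favourable in one direction: because $P$ is metabelian we have $W' \leq P$ and hence $W''' \leq P'' = 1$ automatically, so $W$ has derived length at most $3$. To make the length exactly $3$ I need $W'' \neq 1$. Since $W' = P'\,[P,t]$ and $[P',P'] = P'' = 1$, one finds $W'' = [P', [P,t]]\,[[P,t],[P,t]]$, so the entire obstruction to metabelianness is measured by how $[P,t]$ fails to be abelian and fails to centralise $P'$. The construction therefore hinges on engineering a $t$-module structure on a metabelian $P$ in which this second-layer commutator is nonzero, yet is routed into the $t$-fixed directions that the generators $p\,p^t p$ of $Q$ cannot produce.

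This last point is the crux and the main obstacle: the witness to $W'' \neq 1$ and the generators of $Q$ pull in opposite eigendirections, so $P$ and the action must be chosen to satisfy both $[[P,t],[P,t]] \neq 1$ (non-metabelianness of $W$) and $(Q \rtimes \gen{t})'' = 1$ (metabelianness of $\ps{W}{3}$) simultaneously. I expect no clean conceptual reason for such a $P$ to exist; rather, one fixes an explicit metabelian $3$-group of order $3^6$ together with an explicit order-$2$ automorphism and verifies the three conditions directly, for instance by a finite computation with generators and relators. This matches the paper's remark that the construction is ad hoc and admits no evident generalization.
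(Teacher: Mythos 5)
Your structural reductions are sound and in fact mirror the skeleton of the paper's construction: the paper also takes $W = P \rtimes \gen{t}$ with $P$ a metabelian $3$-group and $t$ of order $2$, and your identification $\ps{W}{2} = P$ (trivial image in $W/P$ plus surjectivity of squaring on a group of odd order) is correct and cleaner than the paper's treatment, which only uses the inclusion $\ps{W}{2} \leq P$. Your computation $(pt)^3 = p\,p^t p\,t$, the consequence $t \in \ps{W}{3}$, the description $\ps{W}{3} = Q \rtimes \gen{t}$, the eigenspace analysis of the image of $Q$ in $P^{\mathrm{ab}}$, and the bookkeeping $W''' \leq P'' = 1$ are all correct.

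However, there is a genuine gap, and it is exactly where the entire content of the theorem lies: you never exhibit the group. The statement is an existence claim, and your proposal ends by asserting that one should ``fix an explicit metabelian $3$-group of order $3^6$ together with an explicit order-$2$ automorphism and verify the three conditions directly,'' without producing such a $P$, such an action, or the verification; you even say you expect no conceptual reason for one to exist. Reducing the theorem to ``a suitable finite object exists, findable by finite search'' is not a proof that it exists. The paper does this work explicitly: it takes $P = (\Zmod{9} \times \Zmod{9}) \rtimes H_3$, where $H_3$ is the extraspecial group of order $27$ and exponent $3$ acting through concrete matrices in $\SL_2(\Zmod{9})$, and $t$ acting as $-I$ on $\Zmod{9} \times \Zmod{9}$ and trivially on $H_3$; the delicate point, which your eigenspace heuristic does not substitute for, is the verification that $[N, H_3] = \Zmod{9} \times 3\Zmod{9}$ coincides exactly with the fixed-point set of $z = [x,y]$, which is what makes $P' = [N,H_3] \rtimes \gen{z}$ abelian (so $P = \ps{W}{2}$ is metabelian) while $W' = N \rtimes \gen{z}$ is non-abelian (so $W$ has derived length $3$). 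Two further slips: you assume $\abs{W} = 1458$ at the outset, importing the paper's minimality remark as a hypothesis (harmless as a heuristic, but logically backwards); and your crux requires $[[P,t],[P,t]] \neq 1$, which is an unnecessary strengthening --- in the paper's example $[P,t] = N$ is abelian, and non-metabelianness of $W$ comes entirely from the other term $[P', [P,t]] \neq 1$ in your own decomposition of $W''$.
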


\begin{proof}
    Let $H_3$ denote the mod-$3$ Heisenberg group, which is the non-abelian group of order 27 and exponent 3.
    It admits the presentation \[
        H_3 = \gp{ x, y, z }{ x^3 = y^3 = 1, z = [x, y], [x, z] = [y, z] = 1 }.
    \] Write $\Zmod{n}$ for the cyclic group of order $n$.
    The group $W$ is constructed as \[
        W := (\Zmod{9} \times \Zmod{9}) \rtimes_\varphi (H_3 \times \Zmod{2})
    \] where, letting $\Zmod{2} = \gen{t}$, the action is defined by $\varphi \colon H_3 \to \SL_2(\Zmod{9})$ which maps \[
        x \mapsto \mat{1}{-1}{3}{-2}, \quad y \mapsto \mat{-2}{0}{0}{4}, \quad t \mapsto \mat{-1}{0}{0}{-1}.
    \]

    We check that this is a well-defined group action.
    (It is in fact faithful, however this is -- while easily verified -- unnecessary for the proof.)
    Let $X := \varphi(x)$, $Y := \varphi(y)$, $T := \varphi(t)$, and $Z := [X, Y] = \varphi(z)$.
    As $T$ is order 2 and central in $\SL_2 (\Zmod{9})$, we only need to check the map $H_3 \to \SL_2 (\Zmod{9})$.
    We see first that \[
        X^2 = \mat{-2}{1}{-3}{1}, \quad Y^2 = \mat{4}{0}{0}{-2}
    \] and thus $X^3 = Y^3 = 1$. As \[
        X Y = \mat{-2}{-4}{-6}{-8} = \mat{-2}{-4}{3}{1}
    \] and \[
        X^{-1} Y^{-1} = \mat{-2}{1}{-3}{1} \mat{4}{0}{0}{-2} = \mat{-8}{-2}{-12}{-2} = \mat{1}{-2}{-3}{-2}
    \] we see that \[
        Z = \mat{1}{-2}{-3}{-2} \mat{-2}{-4}{3}{1} = \mat{-8}{-6}{0}{10} = \mat{1}{3}{0}{1}
    \] so that $Z$ has order $3$, with $Z^{-1} = \mat{1}{-3}{0}{1}$.

    We now determine the conjugation action of $Z$: \[
        Z^{-1} \mat{a}{b}{c}{d} Z = \mat{a-3c}{b-3d}{c}{d} Z = \mat{a-3c}{b + 3(a-d)}{c}{d+3c}
    \] as $9c = 0$, so the matrix $\mat{a}{b}{c}{d}$ is in the centralizer of $Z$ precisely when both $a-d$ and $c$ lie in $3 \Zmod{9}$.
    This is true for both $X$ and $Y$, so we have $[X,Z] = [Y,Z] = 1$ as required, which completes the verification that $\varphi$ is a well-defined group homomorphism (or in other words, $X$, $Y$ and $T$ generate a subgroup of $\SL_2(\Zmod{9})$ isomorphic to $H_3 \times \Zmod{2}$, modulo the injectivity of $\varphi$ which we will not verify here).

    We claim that $\ps{W}{2} = (\Zmod{9} \times \Zmod{9}) \rtimes H_3$ and $\ps{W}{3} = (\Zmod{9} \times \Zmod{9}) \rtimes \Zmod{2}$.
    The ``$\leq$'' inclusion is Lemma~\ref{lem:basic_quotient}, and the other inclusion is not necessary for the proof and so is left to the curious reader (if it were not the case, it would only make the task at hand easier).
    The group $\ps{W}{3}$ is obviously metabelian, as it is exhibited as the semidirect product of one abelian group and another abelian group.
    On the other hand, $\ps{W}{2}$ will require the following basic computations.

    Recall first the following:
    \begin{lem}
        Let $G = N \rtimes K$.
        Then the derived subgroup $G' = (N' [N, K]) \rtimes K'$.
    \end{lem}

    One can prove the lemma by verifying that every commutator in $G$ lies in the subgroup generated by $N'$, $[N, K]$ and $K'$, and then noting that the action of $K$ on $N$ restricts to an action on $N' [N,K]$.

    In the present case of $\ps{W}{2} = (\Zmod{9} \times \Zmod{9}) \rtimes H_3$, since $N = \Zmod{9} \times \Zmod{9}$ is abelian we simply have $[N, K] \rtimes K'$.
    The subgroup $[N, K]$ is generated by $(I-X)n$ and $(I-Y)n$ for $n \in \Zmod{9} \times \Zmod{9}$.
    As \[
        I - X = \mat{0}{1}{-3}{3}, \quad I - Y = \mat{3}{0}{0}{-3},
    \] we see that $[N, K] = \Zmod{9} \times 3 \Zmod{9} = \set{ \begin{pmatrix} u \\ 3v \end{pmatrix} }{ u,v \in \Zmod{9} }$.

    Now $H_3' = \gen{z} \cong \Zmod{3}$, and we see that the set of invariants for $Z$ is auspiciously none other than $\Zmod{9} \times 3 \Zmod{9}$.
    Thus $[N, K] \rtimes K'$ is abelian, so $\ps{W}{2}$ is metabelian.

    On the other hand, for the negation action of $\Zmod{2}$ on $N = \Zmod{9} \times \Zmod{9}$ we have $[N, \Zmod{2}] = 2N = N$, so we see that $W' = N \rtimes H_3'$, which has derived length 2, so $W$ has derived length $3$ as claimed.
\end{proof}

\begin{rmk}
    The group $W$ constructed above has order $4374 = 2 \times 3^7$.
    It is also possible to construct a group $W$ satisfying the requirements of the theorem as $(\Zmod{3} \times \Zmod{9}) \rtimes (H_3 \times \Zmod{2})$, of order $1458 = 2 \times 3^6$ (the action is \emph{not} simply a restriction or quotient of $\varphi$).
    An exhaustive search with \texttt{GAP} \cite{GAP4} revealed that this is in fact the smallest non-metabelian group with metabelian coprime power subgroups.
    (There are two such groups of order 1458, and their ID pairs in the Small Groups Library are (1458, 1178) and (1458, 1192).)
\end{rmk}

\begin{rmk}
    We cannot extend this construction in an obvious way from the case of $p = 3$ to other primes.
    In particular, it appears to depend on the existence of a matrix of order $p$ in $\SL_2(\Z)$, which only has torsion elements of order at most $6$.
\end{rmk}

\begin{rmk}
    One could ask for a finitely generated infinite group $W$ that shows that being metabelian is not detectable in power subgroups.
    However, the failure will still be only up to finite index: such a group is solvable, and thus its power subgroups are of finite index (as used in the proof of Proposition~\ref{prop:still_got_it}).
\end{rmk}

\section{Complexity analysis}
\label{section:complexity}

By complexity analysis, we are \emph{not} referring to analysis of algorithms and complexity classes such as $\textsf{P}$ and $\textsf{NP}$, but the flavour is similar: we wish to quantify the complexity of detectability of given laws (in a sense we shall make precise), and understand the asymptotic behaviour of this complexity when the powers $m$ and $n$ vary.

We can formulate detectability of commutativity using an infinitely presented group having the appropriate universal property.

\begin{prop}
The detectability of commutativity in power subgroups is equivalent to the fact that, for all coprime $m$ and $n$, the group $G_{m,n}$ defined by the infinite presentation \[
    G_{m,n} := \gp{a, b}{[u^m, v^m], [u^n, v^n] \enskip \Forall u, v \in F(a,b)}
\] is isomorphic to $\Z \times \Z$.
\end{prop}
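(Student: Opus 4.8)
The plan is to recognize $G_{m,n}$ as the relatively free group of rank $2$ in the variety $\cW := \cA\cB_m \cap \cA\cB_n$, and then to reduce the claimed equivalence to the near-tautology that a two-variable law holds throughout $\cW$ precisely when it holds on the two free generators of this rank-$2$ free object. Since detectability of commutativity for a fixed coprime pair $(m,n)$ is by definition the statement $\cW = \cA$, the whole proposition will follow once I show, for each coprime $m,n$, that $\cW = \cA$ if and only if $G_{m,n} \cong \Z \times \Z$.

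First I would pin down the relators. By Proposition~\ref{prop:varietal_characterization}, $G \in \cA\cB_m$ exactly when $\ps{G}{m}$ is abelian; as $\ps{G}{m}$ is generated by $m$-th powers, this holds if and only if $G$ satisfies every law $[u^m, v^m]$ with $u,v \in F_\infty$. Hence $\cW$ is defined by the laws $[u^m,v^m]$ together with $[u^n,v^n]$, and its verbal subgroup $\cW(F(a,b))$ is generated by all $[w_1^m,w_2^m]$ and $[w_1^n,w_2^n]$ for $w_1,w_2 \in F(a,b)$ --- precisely the relators listed in the presentation of $G_{m,n}$. I would then observe that this relator set is already closed under conjugation, since $[u^m,v^m]^w = [(u^w)^m,(v^w)^m]$, so its normal closure (the relation subgroup of the presentation) equals the subgroup it generates, namely $\cW(F(a,b))$. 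Therefore $G_{m,n} = F(a,b)/\cW(F(a,b))$ is the free group of rank $2$ in $\cW$.

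With this identification the two directions are short. Because $\cA \subseteq \cW$ always holds, the free object $G_{m,n}$ maps canonically onto the rank-$2$ free abelian group $\Z \times \Z$ (send $a,b$ to a basis), so there is always an epimorphism $G_{m,n} \epi \Z \times \Z$. If $\cW = \cA$ then $G_{m,n}$ is by definition the rank-$2$ free group of $\cA$, which is $\Z \times \Z$. Conversely, if $G_{m,n} \cong \Z \times \Z$ then $G_{m,n}$ is abelian, so $[a,b]=1$ in $G_{m,n}$; by the universal property of the relatively free group, a two-variable law vanishes throughout $\cW$ iff it vanishes on the two free generators, whence $[x_1,x_2]$ is a law of $\cW$, giving $\cW \subseteq \cA$ and so $\cW = \cA$. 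Quantifying over all coprime $m,n$ then yields the stated equivalence.

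The main point requiring care --- and the conceptual heart of the argument --- is this last equivalence: that a variety is forced to be abelian by the single datum that its rank-$2$ free group is abelian. This works only because commutativity is a two-variable law while $G_{m,n}$ is presented on two generators, so the arities match; for a law of higher arity one would instead need the free object of the corresponding rank, and the analogous finite presentation would live in more variables. The remaining care point is purely bookkeeping: the relators range over all $u,v \in F(a,b)$, so they already comprise the full set of substitution instances generating $\cW(F(a,b))$, and the conjugation-closure observation above ensures that their normal closure coincides with this verbal subgroup rather than being strictly smaller.
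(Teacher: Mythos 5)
Your proof is correct, and it packages the argument differently from the paper, though both rest on the same engine: the universal property of the presentation. You identify $G_{m,n}$ as the relatively free group $F(a,b)/\cW(F(a,b))$ of rank $2$ in the variety $\cW = \cA\cB_m \cap \cA\cB_n$ --- your observation that the relator set is closed under conjugation, so that its normal closure coincides with the verbal subgroup it generates, is exactly the point that makes this identification legitimate and is left implicit in the paper --- and you then reduce everything to the fact that a two-variable law holds throughout $\cW$ if and only if it holds on the free generators of the rank-$2$ free object. The paper instead argues concretely: if $G_{m,n}$ is abelian, then for any group $H$ with $\ps{H}{m}$ and $\ps{H}{n}$ abelian and any $g, h \in H$, the assignment $a \mapsto g$, $b \mapsto h$ kills every relator, so $[g,h] = 1$ and $H$ is abelian; and since every relator is a consequence of $[a,b]$, abelianness of $G_{m,n}$ already forces $G_{m,n} \cong \Z \times \Z$. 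The paper's version is elementary and self-contained, using no verbal-subgroup machinery; yours buys generality and foresight: it is precisely the framework the paper itself sets up later in Section~\ref{subsection:general}, where $\cV \cB_m \cap \cV \cB_n = \cV$ is recast as the statement that the infinite presentation $\cQ^\cL_{k,m,n}$ presents the relatively free group $F_k/\cV(F_k)$, and the arity-matching point you rightly flag as the conceptual heart appears there as the choice of the rank $k$.
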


\begin{proof}
If $G_{m,n}$ is non-abelian then it is a counterexample.
Suppose $G_{m,n}$ is abelian, and let $H$ be a group with $\ps{H}{m}, \ps{H}{n}$ abelian.
Then for every pair of elements $g,h \in H$ there is a homomorphism from $G_{m,n}$ to $\langle g, h \rangle \leq H$ defined by $a \mapsto g, b \mapsto h$, since all the relators have trivial image, and thus $g$ and $h$ commute.
Therefore $H$ is abelian.
Moreover, as all relators in the presentation of $G_{m,n}$ are a consequence of the commutativity of the two generators, $G_{m,n}$ is in fact isomorphic to the free abelian group on 2 generators, $\Z \times \Z$.
\end{proof}

Thus for coprime $m$ and $n$, the word $[a,b]$ is expressible in the free group as a product of conjugates of terms of the form $[u^m, v^m]$ and $[u^n, v^n]$.
Such an expression gives a proof that $\cA \cB_m \cap \cA \cB_n = \cA$.
At this point, it is natural to ask how many different such terms are needed to encode such a proof.
Before giving a very succinct choice of such terms, we phrase the set up in greater generality.

\subsection{General framework}
\label{subsection:general}

\begin{defn}
    Let $\gp{X}{R}$ be a group presentation.
    We call $\gp{X}{S}$ a \emph{subpresentation} of $\gp{X}{R}$ if $S \subseteq R$.
    If moreover $\ncls{S}_{F(X)} = \ncls{R}_{F(X)}$, then we call $\gp{X}{S}$ a \emph{core} of $\gp{X}{R}$.
\end{defn}
A core not only defines an isomorphic group: there is a natural isomorphism induced by the identity map on the free group $F(X)$.
We recall the following standard result.
\begin{lem}
    Let $\gp{X}{R}$ be a presentation of a group that admits a finite presentation, and assume that $X$ is finite.
    Then $\gp{X}{R}$ admits a finite core.
\end{lem}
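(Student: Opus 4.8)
The plan is to produce a finite subset $S \subseteq R$ whose normal closure recaptures all of $N := \ncls{R}_{F(X)}$, the kernel of the natural surjection $F(X) \epi G$. The crux is that finite presentability is insensitive to the choice of finite generating set, so that $N$ is finitely generated \emph{as a normal subgroup} of $F(X)$; once this is in hand, the remainder is a routine finiteness argument.

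First I would establish that $N$ is the normal closure of some finite subset $R_0 \subseteq F(X)$ (not yet assumed to lie in $R$). By hypothesis $G$ admits a finite presentation $\gp{Y}{S_0}$, and $X$ is a finite generating set for $G$. Using Tietze transformations — expressing each $x \in X$ as a word in (the images of) $Y$ and each $y \in Y$ as a word in $X$, then eliminating the generators $Y$ — one obtains a finite presentation $\gp{X}{R_0}$ of $G$ on the generating set $X$ itself. Since both $\gp{X}{R_0}$ and $\gp{X}{R}$ present $G$ compatibly with the identity map on $F(X)$, their relator sets have equal normal closure: $\ncls{R_0}_{F(X)} = N = \ncls{R}_{F(X)}$. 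This step — that a finitely presented group is finitely presented on \emph{every} finite generating set — is the main obstacle, being the only place where genuine content (Tietze's theorem, or equivalently the independence of finite presentability from the generating set) is invoked.

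With $R_0 = \{r_1, \dots, r_k\}$ finite in hand, I would conclude as follows. Each $r_i$ lies in $N = \ncls{R}_{F(X)}$, hence is expressible as a product of finitely many conjugates of elements of $R^{\pm 1}$; let $S_i \subseteq R$ be the finite set of relators occurring in one such expression. Put $S = S_1 \cup \dots \cup S_k$, a finite subset of $R$. Then each $r_i \in \ncls{S}_{F(X)}$, so
\[
    N = \ncls{R_0}_{F(X)} \leq \ncls{S}_{F(X)} \leq \ncls{R}_{F(X)} = N,
\]
forcing $\ncls{S}_{F(X)} = \ncls{R}_{F(X)}$. Thus $\gp{X}{S}$ is a finite core of $\gp{X}{R}$, as required. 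The only point to note in this final step is that a single element of the free group, being a finite word, involves only finitely many of the (possibly infinitely many) relators in $R$, which is exactly what keeps $S$ finite.
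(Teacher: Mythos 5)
Your proposal is correct and takes essentially the same approach as the paper: both arguments reduce the kernel $N = \ncls{R}_{F(X)}$ to finitely many normal generators and then use the key observation that each such word is a product of conjugates of only finitely many relators from $R$, whose union gives the finite core. The only real difference is that you invoke the Tietze-transformation fact (finite presentability on every finite generating set) as a black box, whereas the paper proves that fact inline: it fixes $\phi \colon F(X) \to F(Y)$ and $\psi \colon F(Y) \to F(X)$ inducing mutually inverse isomorphisms with a finite presentation $\gp{Y}{S}$, and checks that the $\abs{X} + \abs{S}$ relations $x = \psi(\phi(x))$ and $\psi(s) = 1$ suffice to force a subpresentation to define the same group.
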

Briefly, the proof is the following (see also \cite[Theorem 2.10]{chuck}).
Fix an isomorphism to the group defined by a finite presentation $\gp{Y}{S}$, with the isomorphism and its inverse induced by $\phi \colon F(X) \to F(Y)$ and $\psi \colon F(Y) \to F(X)$ respectively.
These data also give an isomorphism for the group $G$ defined by a subpresentation $\gp{X}{R'}$ provided that $x =_G \psi(\phi(x))$ for $x \in X$ and $\psi(s) =_G 1$ for $s \in S$.
These $\abs{X} + \abs{S}$ relations will each be a consequence of a finite subset $R$; the union of these gives us a finite choice for $R'$.

Let $\cV$ be a finitely based variety, endowed with a chosen finite set $\cL$ of defining laws.
We only need finitely many variables for the laws in $\cL$; suppose that each law is in $F_k \leq F_\infty$.
Suppose further that the relatively free group $F_k / \cV(F_k)$ is finitely presented (as an abstract group).
For example, if $\cV$ is nilpotent (or locally nilpotent) then this relatively free group is finitely generated and nilpotent, thus finitely presented.
Now $\cV \cB_m \cap \cV \cB_n$ is equal to $\cV$ if and only if its relatively free group of rank $k$ is (naturally) isomorphic to $F_k / \cV(F_k)$.
That is, if and only if the infinite presentation \[
    \cQ^\cL_{k,m,n} = \gp{ x_1, \dots, x_k }{ w(u_1, \dots, u_k), w(v_1, \dots, v_k) \text{ for } w \in \cL, u_i \in \ps{F_k}{m}, v_i \in \ps{F_k}{n} }
\] is a presentation for the finitely presented group $F_k / \cV(F_k)$.
If this is the case, then $\cQ = \cQ^\cL_{k,m,n}$ admits a finite core.
Thus there is a partial algorithm that will decide if $\cQ$ does indeed present $F_k / \cV(F_k)$: enumerate larger and larger finite subpresentations of $\cQ$ (that is, a filtration of the set of relators by finite sets) and attempt for longer and longer for each finite subpresentation to find a proof of isomorphism, proceeding in a diagonal fashion (we ``diagonalize'' the filtration and the isomorphism search).
In general, the isomorphism problem is \emph{partially} decidable (that is, there is an algorithm that will succeed in proving two input groups are isomorphic if they are isomorphic, but may fail otherwise), but our situation is easier, as we can fix the identity map on the generators (assuming our finite presentation for the relatively free group has $k$ generators).
So we only require a partial algorithm for the word problem: for instance, at the $r$-th attempt we can determine all words in $F_k$ which are a product of conjugates of at most $r$ relators or inverses of relators in the finite subpresentation, by words of length at most $r$, and freely reduce to see whether all the relators of the finite presentation for $F_k / \cV(F_k)$ appear.
Thus we have established the following:
\begin{prop}
    \label{prop:re}
    Let $\cV$ be a finitely based variety, and suppose that it admits a finite set of defining laws such that each law is on at most $k$ variables.
    Suppose that the relatively free group $F_k / \cV(F_k)$ is finitely presented as an abstract group.
    Then the set of coprime integers $m$ and $n$ for which \[
        \cV \cB_m \cap \cV \cB_n = \cV
    \] is a recursively enumerable set.
    That is, there is an algorithm which, given as input a pair $m, n$, will output YES and terminate in finite time if and only if the varieties are equal.
    \qed
\end{prop}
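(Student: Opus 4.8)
The plan is to turn the equality of varieties into a question about a single, explicitly presented group, and then to observe that this question is semidecidable precisely because we may fix the identity map on the generators. The reduction is the one set up in the discussion above: since the defining laws in $\cL$ involve at most $k$ variables, a group lies in $\cV$ if and only if those laws vanish on the relatively free group of rank $k$, so $\cV\cB_m \cap \cV\cB_n = \cV$ holds if and only if the relatively free group of rank $k$ in $\cV\cB_m \cap \cV\cB_n$ is naturally isomorphic to $F_k/\cV(F_k)$. Equivalently, writing $R$ for the relator set of $\cQ = \cQ^\cL_{k,m,n}$, the point is that $\ncls{R}$ is exactly the verbal subgroup of $\cV\cB_m \cap \cV\cB_n$ in $F_k$ (the laws of an intersection of varieties being the union of the laws, the verbal subgroup is the product of the two verbal subgroups, each normally generated by the substitution instances $w(u_1,\dots,u_k)$ and $w(v_1,\dots,v_k)$), so $\cQ$ presents the relevant relatively free group.

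Next I would fix once and for all a finite presentation $\gp{x_1, \dots, x_k}{S}$ of the finitely presented group $F_k/\cV(F_k)$ on the \emph{same} generating set; such an $S$ exists by the finite core lemma applied to the infinite presentation of $F_k/\cV(F_k)$ by all substitution instances of the laws in $\cL$. Every relator in $R$ is a substitution instance of a law in $\cL$, hence lies in the verbal subgroup $\cV(F_k) = \ncls{S}$; therefore the natural map $\gp{x_1,\dots,x_k}{R} \epi \gp{x_1,\dots,x_k}{S}$ induced by the identity on generators is always a well-defined surjection. Consequently $\cV\cB_m \cap \cV\cB_n = \cV$ holds precisely when this surjection is injective, and by finiteness of $S$ this is the single condition that each of the finitely many relators $s \in S$ lies in $\ncls{R}$.

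It remains to check that this condition is recursively enumerable in $(m,n)$. The set $R$ is itself recursively enumerable: one enumerates the finitely many $w \in \cL$ together with all tuples $u_i \in \ps{F_k}{m}$ and $v_i \in \ps{F_k}{n}$. I would then dovetail two searches, one enumerating larger and larger finite subsets $R' \subseteq R$, the other searching, for each $s \in S$, for an expression of $s$ as a product of conjugates of elements of $R'^{\pm 1}$ (bounding simultaneously the number of factors and the lengths of the conjugators). If $\cV\cB_m \cap \cV\cB_n = \cV$, then $\gp{x_1,\dots,x_k}{R}$ presents the finitely presented group $F_k/\cV(F_k)$ and hence admits a finite core $R'$ by the finite core lemma; for this $R'$ the search succeeds, and the procedure outputs \texttt{YES} and halts. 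If instead the varieties are unequal, some $s \in S$ lies outside $\ncls{R}$, no such expression ever appears, and the procedure runs forever. Thus the sought set of coprime pairs is exactly the halting set of this dovetailed search.

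The main obstacle, and the reason the conclusion is recursive enumerability rather than decidability, is the one-sidedness of the word problem: we can semidecide whether a given $s \in S$ is a consequence of $R$, but we can never certify in finite time that it is not. What makes the argument go through is that we are not confronting the isomorphism problem in its general (only partially decidable) form, but merely asking whether a \emph{fixed} natural surjection, given by the identity on generators, is an isomorphism. Because one inclusion of varieties is automatic, only the partial word-problem search is needed, and the finite core lemma is exactly the input guaranteeing that this search terminates in — and only in — the affirmative case.
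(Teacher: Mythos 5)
Your proposal is correct and follows essentially the same route as the paper: both reduce equality of the varieties to the statement that the identity-on-generators map makes $\cQ^\cL_{k,m,n}$ a presentation of $F_k/\cV(F_k)$, and both then dovetail an enumeration of finite subpresentations with a bounded search expressing the finitely many relators of a fixed finite presentation as products of conjugates of relators of $\cQ$, with the finite core lemma guaranteeing termination exactly in the affirmative case. Your use of the core lemma to manufacture a finite presentation $S$ on the same $k$ generators is a slightly tidier packaging of what the paper handles by assumption (and by Tietze moves in the general case), but it is not a different argument.
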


However, we have only demonstrated the existence of such an algorithm: to actually implement the algorithm, we require additionally a finite presentation of $F_k / \cV(F_k)$.
For example, a presentation of the free 2-generator 4-Engel group was obtained by Nickel \cite[\S 3.1]{nickel}.
To use Nickel's (polycyclic) presentation, where clearly only $a_1$ and $a_2$ are needed to generate the group, we use the obvious Tietze moves to remove the other generators; in general, given a finite presentation of $F_k / \cV(F_k)$ on more than $k$ generators, we could either enumerate presentations of the same group (in a blind search, via Tietze moves) until we construct one with $k$ generators, or replace the partial algorithm for the word problem with a partial algorithm for the isomorphism problem (thereby deferring the difficulty).

The argument establishing Proposition~\ref{prop:re} works in greater generality.
In fact, all that we used about the varieties $\cB_m$ and $\cB_n$ is that they admit a basis which is a recursively enumerable subset of $F_\infty$ (for example, a finite set).
This is what implies that $\ps{F_k}{m} = \cB_m(F_k)$ is recursively enumerable: for a general variety $\cU$, each element of $\cU(F_k)$ is a finite product of images of the defining laws, and each defining law has a recursively enumerable set of images in $F_k$.
Under these conditions, we have a corresponding recursively enumerable presentation $\cQ$ of the $k$-generated relatively free group in $\cV \cU \cap \cV \cW$.
Thus, for $\cV$ as in Proposition~\ref{prop:re}, there is a partial algorithm that takes as input the description of two recursively enumerable bases, for varieties $\cU$ and $\cW$, and will succeed in determining that $\cV \cU \cap \cV \cW = \cV$ when this is the case.
We do not need the assumption that we made in the ``Burnside'' case that the varieties $\cU$ and $\cW$ have trivial intersection, but if this were not the case we actually could have equality only if $\cV$ were the variety of all groups \cite[23.32]{h_neumann_book}.

\subsection{Complexity of the abelian case}

For the abelian case, where $\cV = \cA$ defined by the law $[x, y]$, $k = 2$, and $F_2 / \cA(F_2) = F_2 / [F_2, F_2] \cong \Z \times \Z$, there is an extraordinarily short finite presentation of $\cQ^{\{[x,y]\}}_{2,m,n}$.

\begin{thmAbelianPresentation}
    Let $m$ and $n$ be coprime. The following is a presentation of $\mathbb{Z} \times \mathbb{Z}$\,: \[
        \gp{ a, b }{ [a^m, b^m], [a^m, (ab)^m], [b^m, (ab)^m], [a^n, b^n], [a^n, (ab)^n], [b^n, (ab)^n] }.
    \]
\end{thmAbelianPresentation}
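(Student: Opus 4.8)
The plan is to show that the finitely presented group $G$ defined by the six relators is isomorphic to $\Z \times \Z$ by proving that $G$ is abelian. Since every relator is a commutator, it dies in $\Z \times \Z$, so the assignment $a \mapsto (1,0)$, $b \mapsto (0,1)$ defines a surjection $\pi \colon G \epi \Z \times \Z$. If we knew $G$ were abelian, then $G$ would be a quotient of $F(a,b)^{\mathrm{ab}} \cong \Z \times \Z$, and composing $\Z \times \Z \epi G \xrightarrow{\pi} \Z \times \Z$ would give a surjective endomorphism of $\Z \times \Z$, necessarily an isomorphism since $\Z \times \Z$ is Hopfian; hence $\pi$ itself would be an isomorphism. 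So everything reduces to showing $[a,b] = 1$ in $G$.

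To prove $G$ is abelian I would work through the lower central quotients $G/\gamma_{w+1}(G)$ and show by induction on $w$ that each is abelian. For the base case $w = 2$: in any group the class-two identity $[a^m, b^m] \equiv [a,b]^{m^2} \pmod{\gamma_3}$ holds, so the relator $[a^m,b^m]$ forces $[a,b]^{m^2} \in \gamma_3(G)$ and the relator $[a^n,b^n]$ forces $[a,b]^{n^2} \in \gamma_3(G)$. Since $\gamma_2/\gamma_3$ is cyclic, generated by the image of $[a,b]$, and $\gcd(m^2, n^2) = 1$, we conclude $[a,b] \in \gamma_3(G)$, that is, $G/\gamma_3(G) \cong \Z \times \Z$. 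Only two of the six relators are needed here; the role of the four relators involving $ab$ is precisely to drive the higher-weight steps of the induction.

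For the inductive step, assuming $G/\gamma_w(G)$ is abelian (so $\gamma_2 \subseteq \gamma_w$), I would expand all six relators modulo $\gamma_{w+1}(G)$ using Hall's collection formula. Writing each relator in terms of the basis of weight-$w$ basic commutators, the three $m$-relators and three $n$-relators yield a system of integer linear equations on the weight-$w$ exponents, whose coefficients are the collection-formula binomials $\binom{m}{j}$ and $\binom{n}{j}$. The crux is to show that, because $m$ and $n$ are coprime, this overdetermined system forces every weight-$w$ basic commutator to be trivial, so that $G/\gamma_{w+1}(G)$ is again abelian; it is here that having all three of $[a,b]$, $[a,ab]$, $[b,ab]$ present at each prime supplies the redundancy needed to make the relevant integer coefficient matrix unimodular. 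I expect this collection-formula bookkeeping to be the main computational obstacle. The cleanest route may instead be to package the whole argument as a single explicit word identity expressing $[a,b]$ as a product of conjugates of the six relators in $F(a,b)$, built from a B\'ezout relation $\alpha m + \beta n = 1$; such an identity must exist by the core lemma, and exhibiting it proves the theorem directly.

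The remaining subtlety is the passage from the nilpotent quotients back to $G$ itself: the induction yields $\gamma_2(G) \subseteq \gamma_{w+1}(G)$ for all $w$, hence $\gamma_2(G) = \gamma_3(G) = \cdots$, and to conclude $\gamma_2(G) = 1$ one must rule out a nontrivial stable term, for instance by establishing that $G$ is residually nilpotent, or that $\gamma_2(G)$ is finitely generated so that the relation $\gamma_2(G) = [\gamma_2(G), G]$ forces its triviality. The explicit-identity approach of the previous paragraph avoids this issue entirely, which is a further reason to prefer it.
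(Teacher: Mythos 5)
Your reduction to proving $[a,b]=1$ is fine and your base case is correct, but the engine of your argument --- induction on lower central quotients --- cannot prove this theorem, and the issue you flag at the end as a ``remaining subtlety'' is in fact the entire content of the statement. Working modulo $\gamma_{w+1}(G)$ can only ever show that every nilpotent quotient of $G$ is abelian, i.e.\ that $\gamma_2(G)=\gamma_3(G)=\cdots$. Moreover, your inductive step needs none of the machinery you describe and none of the four relators involving $ab$: once $\gamma_2\subseteq\gamma_w$, the quotient $G/\gamma_{w+1}$ has class at most $2$ and $\gamma_2/\gamma_{w+1}$ is generated by the image of $[a,b]$ (in a $2$-generator group, $\gamma_2$ is the normal closure of $[a,b]$, and conjugates of $[a,b]$ agree with $[a,b]$ modulo $\gamma_3\subseteq\gamma_{w+1}$), so the exact class-$2$ identities $[a^m,b^m]=[a,b]^{m^2}$ and $[a^n,b^n]=[a,b]^{n^2}$ close the induction using only the two relators $[a^m,b^m]$ and $[a^n,b^n]$. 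This is fatal, because the two-relator group $\gp{a,b}{[a^m,b^m],[a^n,b^n]}$ is \emph{not} abelian for $m,n>1$: any group generated by an element $a$ of order $m$ and an element $b$ of order $n$ satisfies both relators (for $(m,n)=(2,3)$, take $S_3$ with $a=(1\,2)$, $b=(1\,2\,3)$). So the lower-central-series method is blind to the difference between the two-relator and six-relator presentations and therefore cannot succeed, no matter how the bookkeeping is done. Your two proposed rescues also fail: residual nilpotence of $G$ is exactly what is missing and cannot be obtained from nilpotent quotients (given the above, it is essentially equivalent to the theorem), and the implication ``$\gamma_2$ finitely generated and $\gamma_2=[\gamma_2,G]$ forces $\gamma_2=1$'' is false ($S_3$ again: $\gamma_2=[\gamma_2,G]=A_3\neq 1$). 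Finally, the fallback is circular: the core lemma guarantees that \emph{some} finite subset of the infinite family $[u^m,v^m],[u^n,v^n]$, $u,v\in F(a,b)$, is a core, not that \emph{this particular} six-element subset is one --- the latter is precisely the statement to be proved, so its existence cannot be invoked for free.

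What is missing, and what the paper's proof supplies, is an argument that $[a,b]$ is central in $G$ \emph{itself}, on the nose rather than modulo each $\gamma_{w+1}$. The paper does this by introducing an auxiliary group $\Gamma$ on five generators $a,b,x,y,z$ with nine commutator relators, where $x,y,z$ stand in for $a^{qn}$, $b^{qn}$, $(ab)^{qn}$ coming from a B\'ezout relation $pm-qn=1$; each relator of $\Gamma$ maps to a commutator of powers of two elements that commute by one of the six given relators, so $\Gamma$ surjects onto $G$, and a sequence of direct word manipulations shows that $\gen{a,b}\leq\Gamma$ is nilpotent of class $2$. Only after that does your closing step become available and coincide with the paper's: genuine centrality gives $[a,b]^{m^2}=[a^m,b^m]=1$ and $[a,b]^{n^2}=[a^n,b^n]=1$ exactly in $G$, whence $[a,b]=1$ by coprimality.
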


After proving this theorem, we became aware of another proof \cite{stackexchange} that groups with abelian power subgroups are abelian, from which one can extract a 2-generator 6-relator core of $\cQ$ which defines $\Z \times \Z$, just as in Theorem~\ref{thm:its_abelian}.
However, our proof has the advantage of uniformity in the words from the verbal subgroup used, whereas in the other proof the length of the words grows quadratically with $m$ and $n$.

The proof proceeds by first showing that the commutator $[a,b]$ is central; once we know this, the proof that it is trivial is very short.

Rather than prove that $G_{m,n}$ is nilpotent of class $2$ directly, we instead prove the stronger result that the group $\Gamma$ (defined below), an extension of $G_{m,n}$, is nilpotent of class $2$.
This group is moreover a common extension of all the $G_{m,n}$, so we see our introducing $\Gamma$ as abstracting away $m$ and $n$ from the proof.
(We will of course prove later that each $G_{m,n} \cong \Z \times \Z$, and so technically $\Z \times \Z$ itself is also a common extension \emph{a posteriori}, but we are constructing a group which is \emph{a priori} a common extension.)

\begin{defn}
    Let the group $\Gamma$ be defined by the presentation \[
        \gp{ a, b, x, y, z }{ [a, x], [b, y], [ab, z], [x, y], [x, z], [y, z], [ax, by], [ax, abz], [by, abz]}.
    \]
\end{defn}

\begin{lem}
    \label{lem:extension}
    The group $\Gamma$ is an extension of $G_{m, n}$, with the quotient map sending $a \mapsto a$ and $b \mapsto b$.
\end{lem}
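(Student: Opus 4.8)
The plan is to exhibit an explicit epimorphism $\pi \colon \Gamma \epi G_{m,n}$ determined by $a \mapsto a$ and $b \mapsto b$. Since $a$ and $b$ already generate $G_{m,n}$, surjectivity will be automatic, so the entire content is the \emph{well-definedness} of $\pi$: one must check that each of the nine defining relators of $\Gamma$ maps to a consequence of the relators $[u^m,v^m]$ and $[u^n,v^n]$ of $G_{m,n}$.

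The key idea is to use coprimality to choose the images of $x$, $y$, $z$ so that each of these generators becomes visibly an $n$-th power while its ``shift'' becomes visibly an $m$-th power. Concretely, I would write $1 = sm + tn$ with $s,t \in \Z$ and set $\pi(x) = a^{-tn}$, $\pi(y) = b^{-tn}$, $\pi(z) = (ab)^{-tn}$. The point of this choice is the identity $1 - tn = sm$, which yields $a\,\pi(x) = a^{1-tn} = a^{sm}$, and likewise $b\,\pi(y) = b^{sm}$ and $ab\,\pi(z) = (ab)^{sm}$. Thus $\pi(x), \pi(y), \pi(z)$ are the $n$-th powers $(a^{-t})^n$, $(b^{-t})^n$, $((ab)^{-t})^n$, whereas $\pi(ax), \pi(by), \pi(abz)$ are the $m$-th powers $(a^s)^m$, $(b^s)^m$, $((ab)^s)^m$.

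With these images the nine relators sort into three groups. The three ``mixed'' relators $[a,x]$, $[b,y]$, $[ab,z]$ map to commutators of two powers of a common element (namely $a$, $b$, and $ab$), hence to $1$. The three relators $[x,y]$, $[x,z]$, $[y,z]$ map to commutators of $n$-th powers, so they are instances of $[u^n,v^n]$ (taking $u,v$ among $a^{-t}, b^{-t}, (ab)^{-t}$) and hold in $G_{m,n}$. Finally the three relators $[ax,by]$, $[ax,abz]$, $[by,abz]$ map to commutators of $m$-th powers, hence to instances of $[u^m,v^m]$ (taking $u,v$ among $a^s, b^s, (ab)^s$), and again hold. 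This verifies all nine relations, so $\pi$ is a well-defined epimorphism and $G_{m,n}$ is a quotient of $\Gamma$ with $a \mapsto a$, $b \mapsto b$, as required.

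I expect the only real obstacle to be discovering the correct images rather than verifying them. The naive attempt $ax \mapsto a^m$, i.e.\ $x \mapsto a^{m-1}$, makes $[ax,by]$, $[ax,abz]$, $[by,abz]$ collapse to the defining relators $[a^m,b^m]$, $[a^m,(ab)^m]$, $[b^m,(ab)^m]$, but then the commutativity relators $[x,y]$, $[x,z]$, $[y,z]$ become $[a^{m-1},b^{m-1}]$ and its companions, which are \emph{not} consequences of the defining relations (except in the degenerate case where $m$ and $n$ are consecutive). The resolution, and the heart of the argument, is precisely the B\'ezout substitution $x \mapsto a^{-tn}$, which renders $x$ an $n$-th power and $ax$ an $m$-th power simultaneously; once this is found, each of the nine verifications is a one-line check.
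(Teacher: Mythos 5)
Your proof is correct and takes essentially the same approach as the paper: the paper writes $pm - qn = 1$ and sends $x \mapsto a^{qn}$, $y \mapsto b^{qn}$, $z \mapsto (ab)^{qn}$, which is exactly your B\'ezout substitution with $s = p$, $t = -q$, so that $x, y, z$ become $n$-th powers while $ax, by, abz$ become $m$-th powers. Your sorting of the nine relators into the three groups matches the paper's (more tersely stated) verification.
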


\begin{proof}
    As $m$ and $n$ are coprime, there exist integers $p$ and $q$ such that $pm - qn = 1$, that is, $pm = qn + 1$.
    Define a map $\Gamma \to G_{m, n}$ by $a \mapsto a$, $b \mapsto b$, $x \mapsto a^{qn}, y \mapsto b^{qn}$ and $z \mapsto (ab)^{qn}$.
    This is easily checked to be well-defined, as every defining relator for $\Gamma$ is mapped to a relator of the form $[u^k, v^l]$ for some $u$ and $v$ with $[u, v]$ a defining relator of $G_{m, n}$ and $k, l \in \Z$.
\end{proof}

\begin{prop}
    \label{prop:nilp}
    The subgroup $\gen{a,b} \leq \Gamma$ is nilpotent of class $2$.
\end{prop}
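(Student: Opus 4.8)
We must show that the subgroup $\gen{a,b}$ of $\Gamma$ is nilpotent of class $2$. Equivalently, writing $c = [a,b]$, we want $c$ to be a nontrivial central element of $\gen{a,b}$, so that $\gen{a,b}$ has lower central series of length exactly two. Since $\Gamma$ surjects onto $G_{m,n}$ sending $a,b$ to their namesakes (Lemma~\ref{lem:extension}), nilpotency of class $\leq 2$ for $\gen{a,b}$ will descend to $G_{m,n}$, which is the entire point of introducing $\Gamma$ as an $m,n$-independent carrier of the argument.

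Let me think about how to structure this.

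**The plan.** The plan is to prove that $c = [a,b]$ is central in $\gen{a,b}$ by showing that $c$ commutes with both $a$ and $b$; nilpotency of class exactly $2$ then follows provided $c \neq 1$, which I address last. To show centrality, the natural strategy is to exploit the nine defining relators of $\Gamma$, which are carefully chosen so that the auxiliary generators $x,y,z$ mutually commute (relators $[x,y],[x,z],[y,z]$), each commutes with a matching product of $a$ and $b$ (relators $[a,x],[b,y],[ab,z]$), and the three mixed relators $[ax,by],[ax,abz],[by,abz]$ tie the two families together. First I would expand the three mixed relators. For instance, $[ax,by]=1$ says $ax$ and $by$ commute; using $[a,x]=1$, $[b,y]=1$, and $[x,y]=1$, I can rewrite this commutator in terms of $[a,b]$, $[a,y]$, $[x,b]$, and $[x,y]$, extracting a relation among these. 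The key will be to combine the three mixed relations to deduce that $[a,b]$ can be expressed via the $x,y,z$ generators, which commute with everything relevant.

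**The mechanism.** The heart of the matter is the following. Set $X,Y,Z$ to be $x,y,z$ as elements of the $x,y,z$-subgroup. The relators force $a \equiv X$, $b \equiv Y$, and $ab \equiv Z$ to interact symmetrically: morally, $a$ behaves like $x$, $b$ like $y$, and $ab$ like $z$, but $x,y,z$ all commute whereas $a,b$ need not. The plan is to use the mixed relators $[ax,by]=[ax,abz]=[by,abz]=1$ together with the commuting relations to derive that $[a,b]=[z,\cdot]$-type expressions collapse. Concretely, I expect that expanding $[ax, by]=1$ yields $[a,b] = [a,y]^{-1}[x,b]^{-1}[x,y]^{-1}$ (up to ordering), and similarly the other two mixed relators express $[a,b]$, equivalently $[a,ab]$ and $[b,ab]$, in terms of commutators involving $x,y,z$. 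Since $x,y,z$ are central in the relevant sense — they commute with each other and with the specific products $a,b,ab$ — these expressions will show that $[a,b]$ commutes with $a$ and with $b$. The main obstacle will be bookkeeping: carefully expanding these commutators using the commutator identities $[uv,w]=[u,w]^v[v,w]$ and $[u,vw]=[u,w][u,v]^w$, and tracking which cross-commutators vanish by the defining relators, so as to isolate $[a,b]$ and show it is fixed under conjugation by $a$ and $b$.

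**Finishing, and the possible pitfall.** Once centrality of $c=[a,b]$ in $\gen{a,b}$ is established, $\gen{a,b}$ is nilpotent of class at most $2$. To get class exactly $2$ (the ``class $2$'' of the statement, as opposed to $\leq 2$), I would exhibit a homomorphism from $\Gamma$, or from $\gen{a,b}$, onto a genuinely class-$2$ group in which $[a,b]$ survives nontrivially — for example the integral Heisenberg group, mapping $a,b$ to its standard generators and $x,y,z$ to suitable central or compatible elements — thereby certifying $[a,b]\neq 1$. The step I expect to be the genuine obstacle is not the centrality computation itself, which is forced by the relators, but rather verifying that the chosen relators suffice: that I have not implicitly used a relation absent from the presentation, and that the nine relators really do pin down enough to make $c$ central. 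I would therefore carry out the commutator expansions with explicit attention to which of the nine relators each manipulation invokes, treating the three ``commuting triangles'' ($\{x,y,z\}$ mutually, and each of $a,b,ab$ with its partner $x,y,z$) and the three ``mixed'' relators as the only permissible simplifications.
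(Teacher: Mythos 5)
Your overall reduction (show $[a,b]$ is central in $\gen{a,b}$, then worry separately about nontriviality) matches the paper's, but the engine of your argument has a genuine gap. You propose to expand the three mixed relators and conclude centrality because the ``cross-commutators vanish by the defining relators'' and because $x,y,z$ ``commute with everything relevant''. Neither holds. An honest expansion of $[ax,by]=1$ via $[uv,w]=[u,w]^v[v,w]$ and $[u,vw]=[u,w][u,v]^w$ gives
\[
[a,y]^x\,[a,b]^{xy}\,[x,b]^y = 1,
\]
which expresses a \emph{conjugate} of $[a,b]$ as a product of two elements, $[a,y]$ and $[x,b]$, that are neither trivial nor known to be central: $y$ does not commute with $a$, nor $x$ with $b$, and nothing in the presentation kills these commutators. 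In fact they \emph{cannot} vanish: passing to any nilpotent-of-class-$2$ quotient, where the expansions do become linear, the nine relators force $[a,b]=[y,a]^2$, so killing $[a,y]$ would kill $[a,b]$ itself. Moreover, discarding the conjugation decorations (equivalently, treating commutators as central so that the bookkeeping closes) is precisely the assumption that $\gen{a,b}$ has class at most $2$ --- the thing to be proved --- so the plan as described is circular: it verifies consistency in the free class-$2$ quotient but proves nothing about $\Gamma$. The paper's proof has to work much harder, and by different means: it constructs the symmetry automorphism $a \mapsto b \mapsto a$, $x \mapsto y \mapsto x$, $z \mapsto z^a$ (Lemma~\ref{lem:symmetry}), and then derives, by bespoke rewritings of words rather than formal commutator expansion, the intermediate identities $[ab,yx]=1$, $[a,bzy]=1$, $[b,zax]=1$, $[b,z^{-1}x]=1$ and $[b,[a,z]]=1$ (Lemmas~\ref{lem:ab_yx}--\ref{lem:b_commaz}); these combine to give $ab = a^{y^{-1}}b^{x^{-1}} = a^{bz}b^{za} = a^b b^a$, whence $b$ commutes with $[b,a]$ and, by the symmetry, $a$ commutes with $[a,b]$. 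Your sketch contains no mechanism that would produce these intermediate identities, and without them (or an equivalent) the computation does not close up.

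Your finishing step also fails concretely, though this matters less. There is no homomorphism from $\Gamma$ to the integral Heisenberg group $H=\gen{u,v}$, $w=[u,v]$, carrying $a \mapsto u$ and $b \mapsto v$: the relators $[a,x],[b,y],[ab,z]$ force the images of $x,y,z$ into the centralizers $\gen{u,w}$, $\gen{v,w}$, $\gen{uv,w}$, say $x\mapsto u^jw^{*}$, $y\mapsto v^kw^{*}$, $z\mapsto (uv)^lw^{*}$, and then the remaining six relators require $jk=jl=kl=0$ together with $(1+j)(1+k)=(1+j)(1+l)=(1+k)(1+l)=0$, which is impossible (two of $j,k,l$ would have to equal $-1$, making their product $1$). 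A witness that $[a,b]\neq 1$ does exist --- for instance the quotient of the free class-$2$ nilpotent group on $a,b,x,y,z$ by the nine relators, whose commutator subgroup is infinite cyclic with $[a,b]$ mapping to twice a generator --- but note that the paper's own proof of this proposition only establishes centrality of $[a,b]$, which is all that Theorem~\ref{thm:its_abelian} uses; the nontriviality claim is deferred to Remark~\ref{rmk:gamma_nilp} and not proved there either.
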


\begin{rmk}
    \label{rmk:gamma_nilp}
    The group $\Gamma$ itself is nilpotent of class $2$, with $[\Gamma, \Gamma] \cong \Z$.
    However, we confine ourselves here to proving Proposition~\ref{prop:nilp}, which is all that is required for Theorem~\ref{thm:its_abelian}.
\end{rmk}

We prove Proposition~\ref{prop:nilp} in a sequence of lemmas.
It will be convenient to know that the symmetry in $a$ and $b$ of $G_{m, n}$ extends to $\Gamma$.

\begin{lem}
    \label{lem:symmetry}
    Let $\varphi : a \mapsto b \mapsto a, x \mapsto y \mapsto x, z \mapsto z^a$.
    Then $\varphi$ is an automorphism of $\Gamma$.
\end{lem}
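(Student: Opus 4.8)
The plan is to use von Dyck's theorem: since $\Gamma$ is given by the displayed presentation on the five generators $a,b,x,y,z$, the assignment $\varphi$ extends to a (well-defined) endomorphism of $\Gamma$ as soon as I verify that each of the nine defining relators is sent to the identity of $\Gamma$, i.e.\@ to a consequence of the relators. To upgrade ``endomorphism'' to ``automorphism'' I would then exhibit an inverse; in fact I expect $\varphi^2 = \mathrm{id}$, which settles bijectivity in one stroke. So the work splits into a relator-by-relator check followed by a short computation of $\varphi^2$ on generators.

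First I would dispose of the easy relators by pure symmetry. The map swaps $a\leftrightarrow b$ and $x\leftrightarrow y$, so $[a,x]\mapsto[b,y]$ and $[b,y]\mapsto[a,x]$ are each sent to another defining relator, while $[x,y]\mapsto[y,x]=[x,y]^{-1}$ and $[ax,by]\mapsto[by,ax]=[ax,by]^{-1}$ vanish immediately. The essential tool for the remaining relators (those in which the image $\varphi(z)=z^a$ genuinely intervenes) is the identity extracted from $[ab,z]$: since $ab$ commutes with $z$ we have $z^{ab}=z$, equivalently $z^a = z^{b^{-1}}$, so that $a^{-1}za = bzb^{-1}$. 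This lets me rewrite $\varphi(z)=z^a$ on whichever of the two sides ($a$ or $b$) is convenient. Using it, $\varphi(abz)=ba\,z^a$ collapses to $bza$, which I can display either as the $a$-conjugate $(abz)^a$ or as the $b$-conjugate $b(abz)b^{-1}$. With $z^a = a^{-1}za$ I handle $[y,z]\mapsto[x,z^a]$ (as $x$ commutes with $a$ and with $z$, it commutes with $a^{-1}za$), and with $z^a = bzb^{-1}$ I handle $[x,z]\mapsto[y,z^a]$ (as $y$ commutes with $b$ and with $z$). The relator $[ab,z]\mapsto[ba,z^a]$ follows by conjugating: $[ba,z^a]^{a^{-1}}=[(ba)^{a^{-1}},z]=[ab,z]=1$.

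The hard part will be the two ``diagonal'' relators $[ax,abz]$ and $[by,abz]$, since these are where $\varphi$ mixes the generators nontrivially and where the choice of side for $bza$ matters. For $[by,abz]\mapsto[ax,bza]$ I would write $bza=(abz)^a$ and use that $ax$ commutes with $a$ (because $a,x$ commute), so $[ax,(abz)^a]=[ax,abz]^a=1$ by the relator $[ax,abz]$. For $[ax,abz]\mapsto[by,bza]$ the symmetric trick fails, because $by$ need not commute with $a$; here I would instead use the $b$-conjugate form $bza=b(abz)b^{-1}$ (this is exactly where $z^a=z^{b^{-1}}$ earns its keep), and then the fact that $by$ commutes both with $b$ (as $b,y$ commute) and with $abz$ (the relator $[by,abz]$) forces $[by,b(abz)b^{-1}]=1$. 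Picking the correct conjugate form so that the relevant commuting relation becomes visible is the only genuinely delicate point.

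Finally, for invertibility I would simply compute $\varphi^2$ on generators. It fixes $a,b,x,y$ by construction, and $\varphi^2(z)=\varphi(z^a)=\varphi(a)^{-1}\varphi(z)\varphi(a)=b^{-1}z^a b=z^{ab}$, which equals $z$ by $[ab,z]$. Hence $\varphi^2=\mathrm{id}$, so $\varphi$ is a self-inverse bijection, and therefore an automorphism of $\Gamma$.
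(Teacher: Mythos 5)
Your proof is correct and follows essentially the same route as the paper's: the same key identity $z^a = z^{b^{-1}}$ extracted from $[ab,z]$, and the same conjugation manipulations, dispose of the five relators involving $z$, with the remaining four handled by the evident symmetry. The only difference is the final step: the paper deduces bijectivity from the observation that $\varphi$ already defines an automorphism of the free group $F(a,b,x,y,z)$, whereas you verify $\varphi^2 = \mathrm{id}$ on $\Gamma$ (note that $\varphi^2(z) = z^{ab} \neq z$ in the free group, so this genuinely uses the relator $[ab,z]$) --- your version is, if anything, the more airtight, since a free-group automorphism that descends to a quotient is \emph{a priori} only a surjective endomorphism of it.
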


\begin{proof}
    Since the above also defines an automorphism of the free group $F(a,b,x,y,z)$, it suffices to check that $\varphi$ is a well-defined group homomorphism.
    To verify this we now show that the images of the relators are trivial, in the cases where this is not immediate.
    Note that since $[ab, z] = 1$, we have $z^a = z^{b^{-1}}$.
    \begin{align*}
        \varphi([ab, z])   &= [ba, z^a]   = [ab, z]^a = 1 \\
        \varphi([x,z])     &= [y, z^a]    = [y, z^{b^{-1}}] = [y, z]^{b^{-1}} = 1 \\
        \varphi([y, z])    &= [x, z^a]    = [x, z]^a = 1 \\
        \varphi([ax, abz]) &= [by, baz^a] = [by, bza] = [by, zab]^{b^{-1}} = [by, abz]^{b^{-1}} = 1 \\
        \varphi([by, abz]) &= [ax, baz^a] =[ax, bza] = [ax, abz]^a = 1 \qedhere
    \end{align*}
\end{proof}

\begin{lem}
    \label{lem:ab_yx}
    The commutator $[ab, yx] = 1$ in the group $\Gamma$.
\end{lem}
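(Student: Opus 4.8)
The plan is to prove the equivalent statement $[ab, xy] = 1$ (recall $yx = xy$ by the relator $[x,y]$) by conjugation: I would compute $(ab)(xy)(ab)^{-1}$ and show it reduces to $xy$. The reduction rests on two auxiliary commutation facts that I first extract from the three ``mixed'' relators $[ax, by]$, $[ax, abz]$, $[by, abz]$, namely that $a$ commutes with $y^{-1}z$ and that $b$ commutes with $x^{-1}z$.

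For the first fact I would write $abz = (ax)\cdot\big(x^{-1}(by)\,y^{-1}z\big)$ and observe that $ax$ commutes with $abz$ (the relator $[ax,abz]$) and with the prefix $x^{-1}(by)$ (using $[a,x]$ and $[ax,by]$); hence $ax$ commutes with the remaining factor $y^{-1}z$, and since $x$ already commutes with $y^{-1}z$ (from $[x,y]$ and $[x,z]$), so does $a$. The second fact is the more delicate mirror image: an analogous manipulation of $[by,abz]$, now also invoking $[ab,z]$ and the first fact, yields that $b$ commutes with $x^{-1}z$. (It is tempting to obtain this purely by symmetry via the automorphism of Lemma~\ref{lem:symmetry}, but that map twists $z \mapsto z^a$, so it does not cleanly interchange the roles of the two generators here.)

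With both facts in hand the conjugation computation closes. Writing $(ab)(xy)(ab)^{-1} = a\big(b\,xy\,b^{-1}\big)a^{-1}$, I use $[b,y]$ to get $b\,xy\,b^{-1} = (b x b^{-1})\,y$, rewrite $bxb^{-1} = (bzb^{-1})\,z^{-1}x$ via the second fact, and use $[ab,z]$ to replace $bzb^{-1}$ by $a^{-1}za$. After cancelling, the problem reduces to showing $a(z^{-1}xy)a^{-1} = z^{-1}xy$; expanding this conjugation with $[a,x]$ (so that $axa^{-1}=x$) and the first fact (which controls how $a$ moves past $y^{-1}z$), and using that $x$ commutes with $aya^{-1}$ because it commutes with both $a$ and $y$, everything collapses and $(ab)(xy)(ab)^{-1} = xy$.

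I expect the main obstacle to be the bookkeeping forced by the relator $[ab,z]$: the element $z$ commutes with the product $ab$ and with each of $x$ and $y$, but with neither $a$ nor $b$ separately, so throughout the argument $ab$ and $z$ must be carried as indivisible blocks and never split. This is exactly what makes the extraction of the second auxiliary fact (for $b$) markedly harder than the first (for $a$): in $[ax,abz]$ the factor $ax$ sits at the front of $abz$ and peels off cleanly, whereas in $[by,abz]$ the factor $by$ is entangled in the middle, so the reduction must thread $b$ past $z$ using the $ab$--$z$ commutation rather than any $b$--$z$ commutation.
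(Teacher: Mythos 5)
Your strategy is sound and can be completed, but it is a genuinely different proof from the paper's. The paper argues in one stroke: starting from the word $axybabz = (ax)(by)(abz)$, it moves the block $(abz)$ to the front using the relators $[ax,abz]$ and $[by,abz]$, re-sorts the letters with the remaining commuting relators to reach $abaxybz$, cancels $a$ on the left and $bz$ on the right to obtain $xyba=baxy$, i.e.\ $[ba,xy]=1$, and then converts this to $[ab,yx]=1$ via the symmetry automorphism of Lemma~\ref{lem:symmetry}. Your route instead isolates two centralizer facts and uses neither the symmetry automorphism nor any single clever word. Your Fact A ($[a,y^{-1}z]=1$) is correct exactly as you derive it. Your Fact B ($[b,x^{-1}z]=1$) is also derivable at this stage, though not by a literal mirror of the Fact A peeling and without any use of Fact A; what matters is the order of operations, which realizes precisely your closing remark about threading $b$ past $z$ through the $ab$--$z$ commutation: $by$ commutes with $abz=zab$ (relators $[by,abz]$ and $[ab,z]$), hence with $(zab)(by)^{-1}=zay^{-1}$ (here the $b$ and $b^{-1}$ cancel outright, so $b$ never crosses $z$), hence with $y^{-1}za$ (as $by$ commutes with $y$), hence with $(y^{-1}za)(ax)^{-1}=y^{-1}zx^{-1}$ (relators $[ax,by]$ and $[a,x]$); finally, since $y$ commutes with $b$, $x$ and $z$, first $b=(by)y^{-1}$ commutes with $y^{-1}zx^{-1}$, and then $b$ commutes with $y\cdot y^{-1}zx^{-1}=zx^{-1}=x^{-1}z$. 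Had you instead peeled $(ax)^{-1}$ off first, in strict analogy with Fact A, you would only reach the twisted statement $[b,(x^{-1}z)^a]=1$ --- exactly the obstruction you flagged --- so the $[ab,z]$ flip must come first. Granting the two facts, your concluding conjugation computation is correct as written. The comparison is instructive: your Fact B is, up to inversion, the paper's Lemma~\ref{lem:b_zm1x} ($[b,z^{-1}x]=1$), which the paper proves \emph{after} this lemma and \emph{using} it; your argument shows that this dependency can be reversed, with that commutator proved directly from the relators and the present lemma then deduced by a transparent conjugation. This restructuring would arguably streamline the chain of lemmas leading to Proposition~\ref{prop:nilp}, whereas the paper's approach buys brevity: one word rearrangement plus an appeal to symmetry.
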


\begin{proof}
    In light of Lemma~\ref{lem:symmetry}, we can instead prove $[ba, xy]=1$ as follows:
    \[
    \begin{array}{rl!{\quad \vline \quad}rl}
                & axybabz          &  &= abxz(ab)y \\
                &= (ax)(by)(abz)   &  &= abx(ab)zy \\
                &= (abz)(ax)(by)   &  &= abaxbyz \\
                &= abzxaby         &  &= abaxybz.
    \end{array}
    \]
    After cancelling on the left and right, we have $xyba = baxy$.
\end{proof}

\begin{lem}
    \label{lem:a_bzy}
    The commutator $[a, bzy] = 1$ in the group $\Gamma$.
\end{lem}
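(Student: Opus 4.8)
The plan is to reduce the identity $[a, bzy] = 1$ to the single statement that $abz$ commutes with the word $yx$, which is already within reach from Lemma~\ref{lem:ab_yx}. Writing $w = abz$ for brevity, I would first rewrite both sides of the desired identity $a \cdot bzy = bzy \cdot a$ using only the ``easy'' relations, namely $[b,y]=1$ and the mutual commutativity of $x,y,z$. Since $zy = yz$ and $by = yb$, we have $bzy = ybz$, so the left-hand side becomes $a \cdot bzy = abzy = wy$, while the right-hand side becomes $bzy \cdot a = ybz \cdot a = y\,(abz)^a$, where $(abz)^a = a^{-1}(abz)a = bza = bz\cdot a$. Thus $[a, bzy] = 1$ is equivalent to $wy = y\,w^a$, that is, to $w^y = w^a$.

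Next I would bring in the relator $[ax, abz] = 1$, which says precisely that $w^{ax} = w$, equivalently $(w^a)^x = w$, equivalently $w^a = w^{x^{-1}}$. Substituting this into the reduced target $w^y = w^a$ turns it into $w^y = w^{x^{-1}}$, i.e.\ $y^{-1}wy = xwx^{-1}$; rearranging, this is exactly the assertion that $(yx)w(yx)^{-1} = w$, i.e.\ $[abz, yx] = 1$.

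Finally I would verify $[abz, yx] = 1$ directly. Lemma~\ref{lem:ab_yx} already gives $[ab, yx] = 1$, and $z$ commutes with both $x$ and $y$, hence with $yx$. Since $yx$ therefore commutes with each of the factors $ab$ and $z$, it commutes with their product $abz$:
\[
    yx \cdot abz = (ab \cdot yx)\, z = ab \cdot (z \cdot yx) = abz \cdot yx.
\]

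I expect the main obstacle to be locating this reduction in the first place. A naive attempt to shuffle $a$ past $b$, $z$ and $y$ using the relators one at a time runs in circles, since any single relation merely restates itself and the genuine content only emerges once one recognises that the whole identity is equivalent to $w = abz$ commuting with $yx$, and then appeals to the already-proven Lemma~\ref{lem:ab_yx}. The decisive trick is the use of the relator $[ax, abz]=1$: its role is to convert conjugation by $a$, which we cannot control directly, into conjugation by $x^{-1}$, which interacts harmlessly with the commuting generators $x,y,z$. Spotting this substitution, rather than performing the subsequent routine manipulations, is where I anticipate the difficulty lies.
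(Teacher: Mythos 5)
Your proof is correct, but it takes a genuinely different route from the paper's. The paper proves the lemma by a single direct rewriting chain that stays inside the defining relators: $a(bzy) = (abz)(by)b^{-1} = (by)(abz)b^{-1} = by(ab)zb^{-1} = byz(ab)b^{-1} = (bzy)a$, using $[b,y]$, $[ab,z]$, $[y,z]$ and, as the pivotal move, the relator $[by,abz]$; in particular it makes no use of Lemma~\ref{lem:ab_yx}. You instead invoke the other mixed relator $[ax,abz]$, exploiting that $[abz,ax]=1$ converts conjugation by $a$ into conjugation by $x^{-1}$ (this is exactly the basic fact, which the paper records only after this lemma, that $[g,hk]=1$ implies $g^h=g^{k^{-1}}$), and you thereby reduce the statement to $[abz,yx]=1$, which follows from Lemma~\ref{lem:ab_yx} together with $[x,z]=[y,z]=1$. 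Both arguments are valid and of similar length; all of your intermediate equivalences check out. The paper's computation has the merit of being self-contained, independent of the preceding lemma, while yours is more conceptual: it isolates why the relator $[ax,abz]$ is useful---it trades the uncontrollable $a$-conjugation for an $x^{-1}$-conjugation that the commuting generators absorb---and it puts Lemma~\ref{lem:ab_yx} to work, anticipating the conjugation trick that the paper only deploys systematically from Lemma~\ref{lem:b_zm1x} onwards. It is also no accident that either mixed relator suffices: $[ax,abz]$ and $[by,abz]$ are carried to conjugates of one another by the symmetry automorphism $\varphi$ of Lemma~\ref{lem:symmetry}.
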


\begin{proof}
    We have
    \[
    \begin{array}{rl!{\quad \vline \quad}rl}
               & a(bzy)           &  &= by(ab)zb^{-1} \\
               &= (abz)(by)b^{-1} &  &= byz(ab)b^{-1} \\
               &= (by)(abz) b^{-1}&  &= (bzy)a.
    \end{array}
    \]
\end{proof}

\begin{lem}
    \label{lem:b_zax}
    The commutator $[b, zax] = 1$ in the group $\Gamma$.
\end{lem}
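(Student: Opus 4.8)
The plan is to derive this relation for free from Lemma~\ref{lem:a_bzy}, which states $[a, bzy] = 1$, by applying the symmetry automorphism $\varphi$ of Lemma~\ref{lem:symmetry}. This is exactly the kind of economy that $\varphi$ was introduced to provide: the relations $[a, bzy] = 1$ and $[b, zax] = 1$ read as mirror images of one another under interchanging the roles of $a, x$ with $b, y$, so I expect the second to follow from the first by transporting along $\varphi$.

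Concretely, I would compute $\varphi([a, bzy])$. As $\varphi$ is a homomorphism with $\varphi(a) = b$, the first commutator entry becomes $b$. For the second entry, $\varphi(bzy) = \varphi(b)\,\varphi(z)\,\varphi(y) = a\, z^a\, x$. The only subtle point is the clause $z \mapsto z^a$ of $\varphi$: writing $z^a = a^{-1} z a$, the leading factor $a$ cancels the $a^{-1}$, so that $a\, z^a\, x = a\,(a^{-1} z a)\, x = zax$. Hence $\varphi([a, bzy]) = [b, zax]$.

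Since $\varphi$ is an automorphism of $\Gamma$ and $[a, bzy] = 1$ holds in $\Gamma$ by Lemma~\ref{lem:a_bzy}, applying $\varphi$ gives $[b, zax] = \varphi([a, bzy]) = \varphi(1) = 1$, completing the proof. The one thing to get right is precisely this bookkeeping of the conjugation $z \mapsto z^a$ rather than a plain swap of $x$ and $y$; it is this twist that turns $a\, z^a\, x$ into $zax$ and makes the transported commutator land exactly on $[b, zax]$. Should the symmetry route be unexpectedly obstructed, one could instead give a direct rewriting calculation in the style of Lemmas~\ref{lem:ab_yx} and~\ref{lem:a_bzy}, reducing $b \cdot zax$ to $zax \cdot b$ using the defining relators; but the appeal to $\varphi$ is far shorter and is evidently what the preceding lemmas were arranged to enable.
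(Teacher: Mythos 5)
Your proof is correct and is essentially the paper's own argument: both deduce the relation from Lemma~\ref{lem:a_bzy} by transporting it along the symmetry automorphism $\varphi$ of Lemma~\ref{lem:symmetry}. The only (harmless, arguably cleaner) difference is the direction: you apply $\varphi$ to the known relation $[a,bzy]=1$ and see that its image is exactly $[b,zax]$ by the free cancellation $a\,z^a\,x = zax$, whereas the paper applies $\varphi$ to the target $[b,zax]$, simplifies the image to $[a,bzy]$ using the relation $[ab,z]=1$, and then concludes via injectivity of $\varphi$.
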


\begin{proof}
    This follows from Lemma~\ref{lem:a_bzy} by symmetry, as we have
    $\varphi([b, zax]) = [a, z^a b y] = [a, z^{(ab)^{-1} a} b y] = [a, z^{b^{-1}} b y] = [a, b z y] = 1$.
\end{proof}

In the following computations, we will frequently use the basic fact that if $[g, hk] = 1$ then $g^h = g^{k^{-1}}$.

\begin{lem}
    \label{lem:b_zm1x}
    The commutator $[b, z^{-1}x] = 1$ in the group $\Gamma$.
\end{lem}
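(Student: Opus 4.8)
The plan is to reduce the statement to the single equation $b^x = b^z$, and then to obtain that equation by repeatedly applying the rule ``$[g,hk]=1 \Rightarrow g^h = g^{k^{-1}}$'' (flagged just before the lemma) to the relations already at hand. Since $x$ and $z$ commute in $\Gamma$ (the relator $[x,z]$), we have $z^{-1}x = xz^{-1}$, and applying the rule to $[b, xz^{-1}]$ shows that $[b, z^{-1}x]=1$ is equivalent to $b^x = b^z$. So the entire task becomes proving $b^x = b^z$, and I would assemble this from two ingredients.

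The first ingredient I would extract from Lemma~\ref{lem:b_zax}. Because $x$ commutes with both $a$ and $z$, we may rewrite $zax = xza$, and the rule applied to $[b, (xz)a]=1$ yields
\[
    b^{xz} = b^{a^{-1}}.
\]
The second ingredient is a companion expression for $b^{z^{-1}}$. Here I would use the defining relators $[ax, by]$ and $[by, abz]$: they say that $by$ commutes with both $ax$ and $abz$, hence with the ``quotient'' $(ax)^{-1}(abz) = x^{-1}bz$. Since $y$ commutes with each of $x$, $b$ and $z$ (the relators $[x,y]$, $[b,y]$, $[y,z]$), it also commutes with $x^{-1}bz$, so $b = (by)y^{-1}$ commutes with $x^{-1}bz$ as well. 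Applying the rule to $[b, (x^{-1}b)z]=1$ then gives $b^{x^{-1}b} = b^{z^{-1}}$.

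Now I would splice the two ingredients. Writing $b^{x^{-1}b} = (b^{x^{-1}})^b$ and substituting $b^{x^{-1}} = b^{za}$ (the rule applied to $[b,zax]=1$ with the splitting $zax = (za)x$) produces $b^{z^{-1}} = b^{zab}$, which equals $b^{abz}$ since $ab$ commutes with $z$. Conjugating both sides by $z$ and again invoking $[ab,z]=1$ collapses this to $b = b^{z^2ab}$; two further applications of the rule peel off first the trailing $b$ and then the $a$, leaving $b^{z^2} = b^{a^{-1}}$. Comparing this with $b^{xz} = b^{a^{-1}}$ from the first ingredient gives $b^{z^2} = b^{xz}$, and conjugating by $z^{-1}$ finally yields $b^z = b^x$, which is exactly the reduced goal.

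The individual manipulations are elementary, so the real difficulty is purely one of bookkeeping: the auxiliary generators $x$, $y$, $z$ are \emph{not} central in $\Gamma$, and $b$ commutes with none of them on its own. Every cancellation must therefore be licensed by one of the specific commuting pairs built into the presentation ($x$ with $a$, $y$ with $b$, $z$ with $ab$, and $x,y,z$ among themselves), and one must keep careful track of which side conjugation acts on. The genuinely delicate step is the middle one, where $b^{z^{-1}}$ is produced: the trick is to recognise that the two relators $[ax,by]$ and $[by,abz]$ combine to say $b$ commutes with $x^{-1}bz$, and then to choose at each use of the rule the splitting $hk$ of the conjugating word that exposes precisely the commuting pair needed for the next cancellation.
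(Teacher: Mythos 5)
Your proof is correct, and it takes a genuinely different route from the paper's. The paper proves this lemma without Lemma~\ref{lem:b_zax} at all: it first observes that $ax$ commutes with both $abz$ and $a$, hence with $bz$, so $(ax)^b = (ax)^{z^{-1}}$, and then equates two computations of $abxb^{-1}$ --- one reducing it to $(ax)^b$ via Lemma~\ref{lem:ab_yx} and the relators $[b,y]$, $[x,y]$, $[ax,by]$, the other expanding $(ax)^{z^{-1}}$ via $[ab,z]$ and $[x,z]$ --- and cancels to read off $[b,z^{-1}x]=1$ directly. You instead reduce the goal to $b^x = b^z$ and assemble that from Lemma~\ref{lem:b_zax}, exploited with two different splittings ($b^{xz} = b^{a^{-1}}$ and $b^{za} = b^{x^{-1}}$), together with a new centralizer element: the relators $[ax,by]$ and $[by,abz]$ show that $by$ commutes with $(ax)^{-1}(abz) = x^{-1}bz$, and peeling off the $y$ (which commutes with $x$, $b$ and $z$) gives $[b,x^{-1}bz]=1$, i.e.\ $b^{x^{-1}b} = b^{z^{-1}}$. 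I checked the subsequent bookkeeping ($b^{z^{-1}} = b^{zab} = b^{abz}$, hence $b = b^{z^{2}ab}$, hence $b^{z^{2}} = b^{a^{-1}} = b^{xz}$, hence $b^z = b^x$), and every cancellation is indeed licensed by one of the stated relators; there is also no circularity, since Lemma~\ref{lem:b_zax} precedes this lemma in the paper. The trade-off: the paper's computation keeps the dependency graph shallow (its proof of this lemma needs only Lemma~\ref{lem:ab_yx}, not Lemma~\ref{lem:a_bzy} or Lemma~\ref{lem:b_zax}) and leans on the relator $[ax,abz]$; yours is more systematic --- everything is phrased as membership in the centralizer of $b$ plus the splitting rule --- it reuses the already-established Lemma~\ref{lem:b_zax}, and it puts the burden on the relator $[by,abz]$, which the paper's proof of this particular lemma never touches.
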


\begin{proof}
    Note first that since $abz$ and $a$ both commute with $ax$, we have $[ax, bz] = 1$ and thus $(ax)^b = (ax)^{z^{-1}}$. Now
    \[
    \begin{array}{rl!{\vline}rll}
        abxb^{-1} &= (ab)(xy)y^{-1}b^{-1}                                          &&= (ax)^b & \\
                  &= (xy)(ab)y^{-1}b^{-1} \quad \text{Lemma~\ref{lem:ab_yx}}  &&= (ax)^{z^{-1}} \\
                  &= yxay^{-1}                                                     &&= (ab)^{z^{-1}} (b^{-1}x)^{z^{-1}} &\\
                  &= b^{-1} (by)(ax)y^{-1}                                         &&= ab z b^{-1} x z^{-1} &\\
                  &= b^{-1} (ax)(by)y^{-1}                                         &&= ab z b^{-1} z^{-1} x. &
    \end{array}
    \]
    Left-multiplying both sides by $z^{-1} b^{-1} a^{-1}$ gives $z^{-1} x b^{-1} = b^{-1} z^{-1} x$.
\end{proof}

\begin{lem}
    \label{lem:b_commaz}
    The commutator $[b, [a,z]] = 1$ in the group $\Gamma$.
\end{lem}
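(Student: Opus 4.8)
The plan is to convert the claim into a statement that $b$ commutes with one of its own conjugates, and only then to feed in the content-bearing relators. First I would use the relator $[ab, z]$: it gives $z^a = z^{b^{-1}}$, and hence $a^{-1} z^{-1} a = (z^a)^{-1} = b z^{-1} b^{-1}$, so that $[a, z] = a^{-1} z^{-1} a z = b z^{-1} b^{-1} z = [b^{-1}, z]$. Using the identity $[b, bw] = [b, w]$ to cancel the resulting leading factor of $b$, the claim $[b, [a, z]] = 1$ becomes equivalent to $[b, (b^{-1})^z] = 1$, that is, to $[b, b^z] = 1$. Applying Lemma~\ref{lem:b_zax} in the form $b^z = b^{(ax)^{-1}}$ (via the rule $[g, hk] = 1 \Rightarrow g^h = g^{k^{-1}}$) and conjugating by $ax$, this is in turn equivalent to $[b, b^{ax}] = 1$, i.e.\ to $b$ commuting with its $ax$-conjugate.

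Next I would inject the mixed relator $[ax, by]$. Since $ax$ commutes with $by$ we have $(by)^{ax} = by$, whence $b^{ax} = by\,(y^{ax})^{-1}$; peeling off the leading $b$ and then the factor $y$ (using $[b, y] = 1$) reduces $[b, b^{ax}] = 1$ to $[b, y^{ax}] = 1$. Because $x$ commutes with both $a$ and $y$, it commutes with the whole subgroup $\gen{a, y}$ and in particular with $y^a$, so $y^{ax} = y^a$ and the goal collapses to $[b, y^a] = 1$; conjugating by $a$, this says precisely that $y$ commutes with the commutator $[a, b]$.

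The main obstacle is that none of these reductions can be closed using only the ``commuting'' relators $[a, x]$, $[b, y]$, $[x, y]$, $[x, z]$, $[y, z]$, $[ab, z]$: each such manipulation merely trades one commutator-centrality statement for an equivalent one (for instance Lemma~\ref{lem:b_zm1x} gives $[b, b^z] = 1 \iff [b, b^x] = 1$, and the chain above gives $[b, b^z] = 1 \iff [y, [a,b]] = 1$), so purely formal rewriting is circular. Genuine progress requires the extra content carried by the three mixed relators. Concretely, I would expand $[ax, by] = 1$ exactly, with no nilpotency assumption, as $[a, y]^x [a, b]^{xy} [x, b]^y = 1$, and likewise expand $[ax, abz] = 1$ and $[by, abz] = 1$; combining these three identities with the conjugation formulae already extracted in Lemmas~\ref{lem:b_zax} and~\ref{lem:b_zm1x} should isolate $[y, [a, b]] = 1$, and hence the lemma.

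In practice I expect this last step is cleanest as a direct word computation in the two-column style of Lemmas~\ref{lem:ab_yx}, \ref{lem:a_bzy}, \ref{lem:b_zax} and~\ref{lem:b_zm1x}, manipulating the word $b z^{-1} b^{-1} z b$ into $b\, b z^{-1} b^{-1} z$ using the relators and cancelling on the left and right. The delicate part is the commutator bookkeeping that forces in the content of the mixed relators without reintroducing a circular dependency, and here I would exploit the symmetry automorphism $\varphi$ of Lemma~\ref{lem:symmetry}, together with the companion identity $[a, bzy] = 1$ of Lemma~\ref{lem:a_bzy}, to halve the casework.
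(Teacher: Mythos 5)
Your chain of reductions is individually correct (the equivalences $[b,[a,z]]=1 \iff [b,b^z]=1 \iff [b,b^{ax}]=1 \iff [b,y^a]=1$ all check out), but the proposal has a genuine gap exactly where the content lies: at the end you assert that expanding the three mixed relators and ``combining these three identities \dots should isolate $[y,[a,b]]=1$'', and you concede that the ``delicate part'' of the bookkeeping is still to be done. That final step is never carried out, and it is the whole proof; everything before it, as you yourself observe, merely trades the goal for an equivalent centrality statement. A proposal whose decisive step is a plausibility claim is not a proof.

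Moreover, your diagnosis of what is missing is off. You claim that closing the argument requires re-injecting the mixed relators $[ax,by]$, $[ax,abz]$, $[by,abz]$; in fact Lemmas~\ref{lem:b_zax} and~\ref{lem:b_zm1x}, which you already invoke, contain all the content needed --- the mixed relators were consumed in proving them and never need to be touched again. The observation you are missing is simply that $C_\Gamma(b)$ is a \emph{subgroup}. By Lemma~\ref{lem:b_zax} it contains $zax$, and by Lemma~\ref{lem:b_zm1x} it contains $z^{-1}x$; hence it contains the product $(z^{-1}x)(zax) = xax$ (using the relator $[x,z]$) and the inverse $x^{-1}z$; hence it contains the commutator $[xax,\, x^{-1}z]$, and this commutator equals $[a,z]$ because $x$ is central in $\gen{a,x,z}$ (relators $[a,x]$ and $[x,z]$). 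That is the paper's entire proof: two lines, using only the two cited lemmas and commuting relators. The reason your approach stalls is that conjugate-trading identities like $b^x = b^z = b^{(ax)^{-1}}$ only move the goal around; the trick that actually produces $[a,z]$ as a word in known centralizers of $b$ is forming a product and then a commutator \emph{inside} the centralizer, which your reductions never do.
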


\begin{proof}
    By Lemma~\ref{lem:b_zax} we have $zax \in C_\Gamma(b)$, the centralizer of $b$, and by Lemma~\ref{lem:b_zm1x} we have $z^{-1}x \in C_\Gamma(b)$.
    The centralizer thus contains $z^{-1} x z a x = xax$, and thus also \[
        [xax, x^{-1} z] = [a, z]
    \] since $x$ is central in $\gen{a, x, z}$.
\end{proof}

We are now equipped to prove Proposition~\ref{prop:nilp}.

\begin{proof}[Proof of Proposition~\ref{prop:nilp}]

    Starting by applying Lemma~\ref{lem:ab_yx}, we see
    \[
    \begin{array}{rlrl!{\vline}rlrl}
        ab &= (yx) ab (yx)^{-1} &&                               &&= a^{bz} b^{za} && \text{Lemmas~\ref{lem:a_bzy}~and~\ref{lem:b_zax}} \\
           &= y x a y^{-1} b x^{-1} &&                              &&= (a^{bz} b^{za})^{z^{-1}} && \text{LHS $ab$ commutes with $z$} \\
           &= y a y^{-1} x b x^{-1} &&                              &&= a^b b^{zaz^{-1}} \\
           &= a^{y^{-1}} b^{x^{-1}} &&                              &&= a^b b^a && \text{Lemma~\ref{lem:b_commaz}}
    \end{array}
    \]
    Thus $b^a = (a^b)^{-1} ab = b^{-1} a^{-1} b a b = b^{-1} b^a b$.
    Since $b$ commutes with $b^a$, it commutes with $b^{-1} b^a = [b, a] = [a, b]^{-1}$.
    Applying $\varphi$, we see that also $[a, [a, b]] = 1$.
    Thus the subgroup $\gen{a,b} \leq \Gamma$ is nilpotent of class 2.
\end{proof}

\begin{proof}[Proof of Theorem~\ref{thm:its_abelian}]
    As $[a,b]$ is central in $\gen{a,b} \leq \Gamma$ (Proposition~\ref{prop:nilp}) and $\Gamma$ is an extension of $G_{m,n}$ (Lemma~\ref{lem:extension}), it follows that $[a,b]$ is central in $G_{m,n}$.
    Thus $[a^m, b^m] = [a, b]^{m^2}$ and $[a^n, b^n] = [a, b]^{n^2}$.
    Since $m$ and $n$ are coprime, so are $m^2$ and $n^2$.
    Now coprime powers of $[a,b]$ are both trivial, so $[a,b] = 1$.
\end{proof}

\begin{rmk}
    \label{rmk:place_an_order_for_your_orders}
    It is not sufficient to take 5 of the 6 relators of $G_{m, n}$ (for coprime $m, n > 1$).
    Suppose that we omitted $[b^n, (ab)^n]$ (the other cases are analogous).
    A folklore theorem states that for all integers $p, q, r > 1$ one can find a finite group containing elements $a$ and $b$ such that $a, b, ab$ have orders $p, q, r$ respectively (for a proof, see \cite[Theorem 1.64]{milne}).
    Such a group for $(p,q,r) = (n, m, m)$ will be a quotient of the group defined by our truncated presentation, as all defining commutativity relators hold trivially by virtue of one term having trivial image.
    It cannot be abelian, as otherwise the order of $ab$ would be $mn$.
\end{rmk}

\section{Open problems}
\label{section:problems}

While we have a surprisingly and uniformly small core (subpresentation) of $\cQ$ in the abelian case, we still do not know what is the minimum number of relators needed.

\begin{qn}
    \label{qn:four_rel}
    For which coprime $m$ and $n$ is there a $4$-relator presentation for $\Z \times \Z$ that encodes a proof that $\cA \cB_m \cap \cA \cB_n = \cA$, that is, when does the infinite presentation $\cQ^{\{[a,b]\}}_{2,m,n}$ have a $4$-relator core?
    In particular, when is \[
        \Delta_{m,n} = \gp{ a, b }{ [a^m, b^m], [a^n, b^n], [(ab)^m, (ba)^m], [(ab)^n, (ba)^n] }
    \] isomorphic to $\Z \times \Z$ ?
\end{qn}

The van Kampen diagram in Figure~\ref{fig:vk} proves that the question has a positive answer for $(m,n) = (2,3)$.
A van Kampen diagram \emph{per se} is purely topological; the geometry of the drawing has been chosen such that corners generally delimit the 4 subwords $u^{-1}$, $v^{-1}$, $u$ and $v$ in a commutator $[u,v]$.

\begin{figure}
    \centering
    \includegraphics[width=0.7\textwidth]{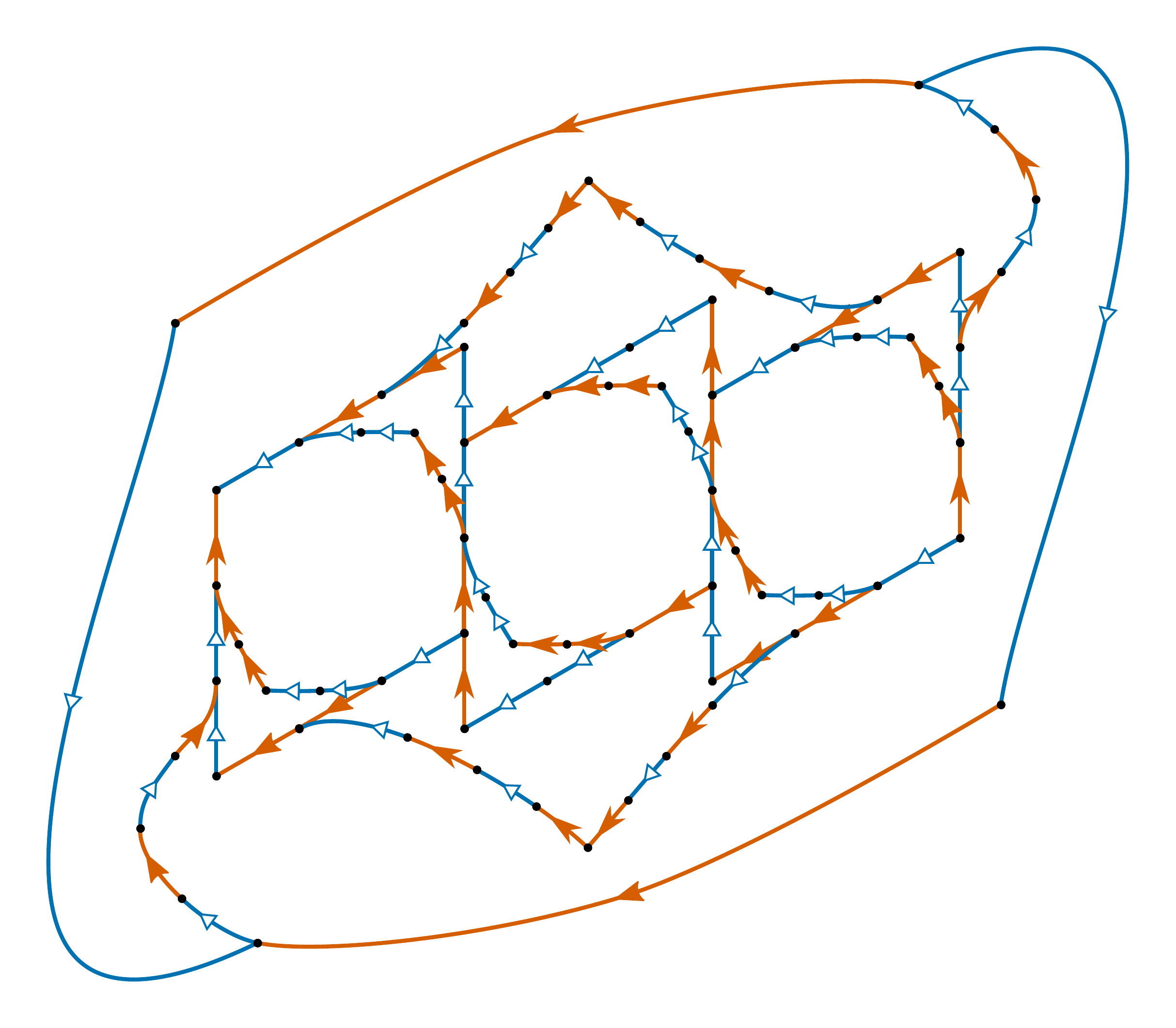}
    \caption{A van Kampen diagram proving that $[a,b] = 1$ in the group $\gp{ a, b }{ [a^2, b^2], [a^3, b^3], [(ab)^2, (ba)^2], [(ab)^3, (ba)^3] }$}
    \label{fig:vk}
\end{figure}

Computational experiments using \texttt{GAP} \cite{GAP4} and \texttt{magma} \cite{magma} provided some evidence for Question~\ref{qn:four_rel} having a positive answer.
In particular, we have verified this for $m = 2$ and odd $n < 50$.
However, we could not answer the question either way for $(m,n) = (3,4)$.
A necessary condition is for the four relators to generate the full \emph{relation module}, that is, the abelianization of the kernel $F(a,b)'$ of the full presentation, as a $\Z[\Z \times \Z]$-module.
(This is actually a cyclic module.)
This condition is met for $(m,n) = (3,4)$; in fact, it is met for all $m \leq 80$ and $n = m+1$.

Note that the argument of Remark~\ref{rmk:place_an_order_for_your_orders} cannot be used to show that the group $\Delta_{m,n}$ has a non-abelian finite quotient: if we ask that $(ab)^m = 1$ and $(ba)^n = 1$, then since $ab$ and $ba$ are conjugate we in fact have $ab = 1$.
Any possible non-abelian finite quotient is quite constrained; in particular, if the order of $ab$ is coprime to either $m$ or $n$ then we immediately have $[ab, ba] = 1$, and similarly if $a$ and $b$ both have order coprime to $m$ then $[a, b] = 1$.
Meanwhile, we have determined computationally for the $(m,n) = (3,4)$ case that if $a$ and $b$ have order dividing 24 then such a quotient is abelian.
This computation was performed using Holt's package \texttt{kbmag} \cite{kbmag1.5} to show that the commutator subgroup of $\Delta_{3,4} / \ncls{a^{24}, b^{24}}$, an \emph{a priori} finitely presented group, is trivial.

\begin{qn}
    Determine the analogous complexity for the nilpotency law \[\nu_c = [[[\dots[x_1, x_2], x_3], \dots, x_c], x_{c+1}].\]
    That is, how does the size of the smallest finite core of $\cQ^{\{\nu_c\}}_{c+1,m,n}$ vary with $c$, $m$ and $n$?
\end{qn}

It seems that the following classification problem would require substantial progress.
\begin{qn}
    Which laws are detectable in power subgroups?
\end{qn}
The difficulty is exemplified by the fact that the 4-Engel law is detectable, but it has been claimed that not all $k$-Engel laws imply the essential local nilpotency that we used.
We thus ask in particular:
\begin{qn}
    For which $k$ is the $k$-Engel law detectable in power subgroups.
\end{qn}
To summarize our knowledge at this time, $x^m$ is detectable in power subgroups, as is every locally nilpotent law (for example, the 4-Engel law $[x, y, y, y, y]$ or a nilpotency law such as $[[x_1, x_2], x_3]$).
On the other hand, $[[x_1, x_2], [x_3, x_4]]$ is not detectable, and neither are some assorted laws for which detectability also fails in finite groups, such as $[[x^2, y^2]^3, y^3]$ and $[x^2, x^y]$.

\subsection*{Acknowledgements}
I thank my supervisor Martin Bridson for guidance, encouragement, and helpful suggestions; Nikolay Nikolov for suggesting the problem of detectability of Engel laws; Peter Neumann for helpful conversations; Geetha Venkataraman for helpful correspondence; Robert Kropholler for helpful comments on a draft; and my officemates L. Alexander Betts and Claudio Llosa~Isenrich for helpful conversations.

\bibliographystyle{../transfer/halpha}
\bibliography{squaresandcubes}

\end{document}